\newtheorem{thm}{Theorem}[section]
\newtheorem{prop}[thm]{Proposition}
\newtheorem{lem}[thm]{Lemma}
\newtheorem{cor}[thm]{Corollary}
\theoremstyle{definition}
\theoremstyle{remark}
\newtheorem{rem}[thm]{Remark}
\numberwithin{equation}{section}
\providecommand\ufootnote[1]{{\let\thefootnote\relax\footnote[0]{#1}}}
\newcommand{\cc}{\mathcal C}
\newcommand{\dc}{\mathcal D}
\newcommand{\oc}{\mathcal O}
\newcommand{\cb}{\mathbb C}
\newcommand{\hb}{\mathbb H}
\newcommand{\tb}{\mathbb T}
\newcommand{\ci}{{\mathcal C}^\infty}
\newcommand{\ol}{\overline}
\newcommand{\pa}{\partial}
\newcommand{\opa}{\ol\partial}
\newcommand{\wt}{\widetilde}
\newcommand{\cx}{{\mathbb{C}}}
\newcommand{\dbar}{\overline{\partial}}
\begin{document}

\title{Solving  $\dbar$ with prescribed support on Hartogs triangles  in $\cb^2$ and $\cb\mathbb P^2$}

\author{Christine Laurent-Thi\'{e}baut and Mei-Chi Shaw}

\date{}

% \thanks{Both authors were partially supported by a grant from the  AGIR program of Grenoble INP  and Universit{\'e} Grenoble-Alpes, awarded to Christine Laurent-Thi\'ebaut.  Mei-Chi Shaw is partially supported by an NSF grant.}

\maketitle

\ufootnote{\hskip-0.6cm Universit{\'e} Grenoble-Alpes, Institut Fourier, Grenoble,
F-38041, France\newline CNRS UMR 5582, Institut Fourier,
Saint-Martin d'H\`eres, F-38402, France
\newline Department of Mathematics, University of Notre Dame, Notre Dame, IN 46556, USA}

%\ufootnote{\hskip-0.6cm  {\it A.M.S. Classification}~: .\newline {\it Key words}~: .}

\bibliographystyle{amsplain}
%\setcounter{page}{-1}
%\input resume

%\tableofcontents
%\input dbar01

In this paper we consider  the problem of solving the Cauchy-Riemann equation with prescribed support. More precisely, let $X$ be a complex manifold of complex dimension $n$  and $\Omega\subset X$ a subdomain of $X$. We ask the following questions:

{\sl Let $T$ be a $\opa$-closed $(r,1)$-current, $0\leq r\leq n$, on $X$ with support contained in $\ol\Omega$, does there exist  a $(r,0)$-current on $X$, with support contained in $\ol\Omega$, such that $\opa S=T$ ?

If moreover $T=f$ is a smooth form or a $\cc^k$ form or an $L^p_{loc}$ form, can we find $g$   with support contained in $\ol\Omega$ and with the same regularity  as $f$ such that $\opa g=f$ ?}

This leads us to introduce the Dolbeault cohomology groups with prescribed support in $\overline \Omega$. Let us denote by $H^{r,1}_{\ol\Omega,\infty}(X)$ the quotient space
$$ \{f\in\ci_{r,1}(X)~|~\opa f=0,~{\rm supp}~f\subset\ol\Omega\}/\opa\{f\in\ci_{r,0}(X)~|~{\rm supp}~f\subset\ol\Omega\}.$$
In the same way, we define $H^{r,1}_{\ol\Omega,\cc^k}(X)$, $H^{r,1}_{\ol\Omega,L^p_{loc}}(X)$ and $H^{r,1}_{\ol\Omega,cur}(X)$ for the $\cc^k$, $L^p_{loc}$ and the current category.

The cohomology groups $H^{r,1}_{\ol\Omega,\infty}(X)$, $H^{r,1}_{\ol\Omega,\cc^k}(X)$ ,$H^{r,1}_{\ol\Omega,L^p_{loc}}(X)$ and $H^{r,1}_{\ol\Omega,cur}(X)$ describe the obstruction to solve the Cauchy-Riemann equation with prescribed support in $\ol\Omega$, respectively in the smooth or $\cc^k$ or $L^p_{loc}$ or current category. Their vanishing is equivalent to the solvability of the  Cauchy-Riemann equation with prescribed support in $\ol\Omega$ in the corresponding category (see section 2 in \cite{LaShdualiteL2} and \cite{LaLp}).

Note that, if $\Omega$ is a relatively compact domain with Lipschitz boundary, by the Serre duality, the properties of the groups
 $H^{r,1}_{\ol\Omega,\infty}(X)$,
 %$H^{r,1}_{\ol\Omega,\cc^k}(X)$ , What is the dual of this group? It should be the cohomology of extendable currents of finite order $k$, for $k=0$ these are extendable measures. But we never use this dual in the paper and we don't need to define it.
 $H^{r,1}_{\ol\Omega,L^p_{loc}}(X)$ and $H^{r,1}_{\ol\Omega,cur}(X)$   are directly related to the properties of the Dolbeault cohomology groups $\check{H}^{n-r,n-1}(\Omega)$, $H^{n-r,n-1}_{L^{p'}_{loc}}(\Omega)$, with $1<p<\infty$, $\frac{1}{p}+\frac{1}{p'}=1$ and $H^{n-r,n-1}_\infty(\ol\Omega)$ of Dolbeault cohomology for extendable currents, $L^{p'}$ forms and of smooth forms up to the boundary.

If $\Phi$ is a family of supports in the complex manifold $X$, for example the family, usually denoted by $c$, of all compact subsets of $X$, we can consider the Dolbeault cohomology with support in $\Phi$. The group $H^{r,q}_{\Phi,\infty}(X)$ is the quotient of the space of $\opa$-closed, smooth $(r,q)$-forms on $X$ with support in the family $\Phi$ by the range by $\opa$ of the space of smooth $(r,q-1)$-forms on $X$ with support in the family $\Phi$. Similarly we can define the groups $H^{r,q}_{\Phi,\cc^k}(X)$ ,$H^{r,q}_{\Phi,L^p_{loc}}(X)$ and $H^{r,q}_{\Phi,cur}(X)$. It follows from Corollary 2.15 in \cite{HeLe2} and Proposition 1.2 in \cite{LaLp}, that the Dolbeault isomorphism holds for the Dolbeault cohomology with support condition. This means that all these groups are isomorphic and we   denote them by $H^{p,q}_\Phi(X)$.   In this paper, we will show that such Dolbeault isomorphism no longer holds
if we change   the condition supported in a family of sets in $X$  to prescribed support. 
For   Dolbeault cohomology groups with prescribed support,  the following proposition is proved in Proposition 2.1.

\begin{prop} Let $X$ be a complex manifold and $\Omega\subset X$ a domain in $X$. For any integer $0\leq r\leq {\rm dim}_\cb X$,
the natural morphisms from $H^{r,1}_{\ol\Omega,\infty}(X)$ (resp. $H^{r,1}_{\ol\Omega,\cc^k}(X)$, $k\geq 0$,
 $H^{r,1}_{\ol\Omega,L^p_{loc}}(X)$, $1\leq p\leq +\infty$) into $H^{r,1}_{\ol\Omega,cur}(X)$ are injective. In particular, if  $H^{r,1}_{\ol\Omega,cur}(X)=0$, then $H^{r,1}_{\ol\Omega,\infty}(X)=0$, $H^{r,1}_{\ol\Omega,\cc^k}(X)=0$, $k\geq 0$, and
 $H^{r,1}_{\ol\Omega,L^p_{loc}}(X)=0$.
 \end{prop}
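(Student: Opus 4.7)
My plan is to reduce all three injectivity statements to a single regularity assertion: if $S$ is an $(r,0)$-current on $X$ with $\opa S = f$, and $f$ belongs to one of the classes $\ci$, $\cc^k$, or $L^p_{loc}$, then $S$ itself lies in the same class. Once this is known, the form $g:=S$ solves $\opa g = f$ in the prescribed category with $\supp g\subset\ol\Omega$, giving injectivity of the natural map into $H^{r,1}_{\ol\Omega,cur}(X)$; the final conclusion about simultaneous vanishing follows at once.

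The regularity statement is local. Fix $x\in X$. If $x\notin\ol\Omega$, then $S$ vanishes on a neighborhood of $x$ and there is nothing to check. If $x\in\ol\Omega$, I choose a small coordinate ball $B$ around $x$ and invoke the local Dolbeault--Grothendieck lemma in the appropriate regularity category to produce a form $g_0$ on $B$ with $\opa g_0 = f|_B$ and the same regularity as $f$. For the smooth case this is classical; for $\cc^k$ and $L^p_{loc}$ it follows from the continuity of the Bochner--Martinelli--Koppelman integral operator on a ball.

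Then $S - g_0$ is an $\opa$-closed $(r,0)$-current on $B$. Writing $S-g_0=\sum_{|I|=r}T_I\,dz^I$ with distributional coefficients $T_I$, the condition $\opa(S-g_0)=0$ is equivalent to $\partial T_I/\partial\ol z_k=0$ for every $I$ and every $k$; hence each $T_I$ is a holomorphic distribution, and by ellipticity a smooth holomorphic function. Consequently $S-g_0$ is smooth on $B$, so $S=g_0+(S-g_0)$ inherits the regularity of $f$ on $B$. Covering $X$ by such neighborhoods yields the desired global regularity of $S$, and setting $g:=S$ completes the argument.

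The only non-formal input is the existence of the local solution $g_0$ in each prescribed category, which is the step that requires most care (particularly for $L^p_{loc}$ and for the extreme values $p=1,\infty$ and $\cc^k$ with explicit kernel estimates); everything else is an application of the elementary fact that $\opa$-closed $(r,0)$-currents have holomorphic coefficients. I expect no serious obstacle beyond citing the standard local solvability results.
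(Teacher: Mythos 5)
Your argument is correct and is essentially the paper's: the paper reduces injectivity to the same regularity statement (its Proposition 1.5, that every solution of $\opa g=f$ inherits the regularity of $f$), proved by producing one regular solution via the Dolbeault isomorphism with regularity (Henkin--Leiterer, Laurent-Thi\'ebaut) and noting that the difference of two solutions is a $\opa$-closed $(r,0)$-current, hence has holomorphic, so smooth, coefficients. Your only deviation is to produce the regular comparison solution locally via the Bochner--Martinelli--Koppelman operator instead of citing the global isomorphism, which is an inessential variation.
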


When $\Omega$ is a   Hartogs triangle type set  in $\cb^2$ or   $\cb\mathbb P^2$, we show that the Dolbeault isomorphims fails to hold for the cohomology with prescribed support. When $\Omega$ is an unbounded Hartogs triangle in $\cb^2$, we get

\begin{thm}\label{nonisom}
 If $X=\cb^2$ and $\Omega=\{(z,w)\in\cb^2~|~|z|>|w|\}$, then  $H^{0,1}_{\ol\Omega,\infty}(X)=0$, but $H^{0,1}_{\ol\Omega,\cc^k}(X)\neq 0$, $k\geq 0$, $H^{0,1}_{\ol\Omega,cur}(X)\neq 0$ and  $H^{0,1}_{\ol\Omega,L^2_{loc}}(X)\neq 0$.
  \end{thm}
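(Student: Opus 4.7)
The plan is to reduce the question to an extension problem for holomorphic functions across $\pa D$, where $D:=\cb^2\setminus\ol\Omega=\{|z|<|w|\}$. For each regularity category $\mathcal X$, the short exact sequence of $\opa$-complexes
$$0\to\mathcal X^{0,*}_{\ol\Omega}(\cb^2)\to\mathcal X^{0,*}(\cb^2)\to\wt{\mathcal X}^{0,*}(\ol D)\to 0$$
(with $\wt{\mathcal X}^{0,*}(\ol D)$ the image of the restriction map to $D$), combined with the vanishing $H^{0,1}_{\mathcal X}(\cb^2)=0$ from Steinness of $\cb^2$, yields
$$H^{0,1}_{\ol\Omega,\mathcal X}(\cb^2)\cong\wt H^{0,0}_{\mathcal X}(\ol D)\big/\oc(\cb^2)\big|_D,$$
i.e.\ holomorphic functions on $D$ admitting an $\mathcal X$-extension to $\cb^2$, modulo restrictions of entire functions. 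The theorem then reduces to the surjectivity (or lack thereof) of $\oc(\cb^2)\to\wt H^{0,0}_{\mathcal X}(\ol D)$ in each category.

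For $H^{0,1}_{\ol\Omega,\infty}(\cb^2)=0$, I would use the biholomorphism $\Phi(\zeta,w)=(\zeta w,w)\colon\Delta\times\cb^*\to D$. Given $u\in\ci(\cb^2)\cap\oc(D)$, the pullback $\tilde u:=u\circ\Phi$ is smooth on $\cb^2_{(\zeta,w)}$, holomorphic on $\Delta\times\cb^*$, and therefore on $\Delta\times\cb$ (the smooth form $\opa\tilde u$ vanishes on the dense subset $\Delta\times\cb^*$, so on $\Delta\times\cb$ by continuity). The chain rule then gives
$$\pa_w^k\tilde u(\zeta,0)=\sum_{j=0}^k\binom{k}{j}\zeta^j(\pa_z^j\pa_w^{k-j}u)(0,0),$$
a polynomial of degree $\leq k$ in $\zeta$, so the Taylor coefficients of $\tilde u$ at the origin satisfy $\tilde a_{m,k}=0$ for $k<m$ and $\tilde a_{m,m+n}=\frac{1}{m!\,n!}(\pa_z^m\pa_w^nu)(0,0)$. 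Setting $A_{mn}:=\tilde a_{m,m+n}$, the Cauchy estimates on $\tilde u$ over $\{|\zeta|\leq r\}\times\{|w|\leq R\}$ yield $|A_{mn}|\leq M(r,R)/(r^mR^{m+n})$; for any $(z,w)\in\cb^2$, choosing $R>\max(|z|,|w|)$ and $r\in(|z|/R,1)$ gives absolute convergence of $U(z,w):=\sum A_{mn}z^mw^n$ on $\cb^2$. The resulting entire function $U$ satisfies $U\circ\Phi=\tilde u$ on $\Delta\times\cb$, so $U|_D=u|_D$; thus $u-U\in\ci(\cb^2)$ is supported in $\ol\Omega$ and solves $\opa(u-U)=\opa u$.

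For the three non-vanishing statements, I follow a uniform strategy. Pick $\tilde u\in\mathcal X(\cb^2)$ with $\opa\tilde u\in\mathcal X$, holomorphic on $D$, and such that $\tilde u|_D$ is not the restriction of an entire function. Then $f:=\opa\tilde u$ is $\opa$-closed, $\supp f\subset\ol\Omega$, and its class in $H^{0,1}_{\ol\Omega,\mathcal X}(\cb^2)$ is nontrivial: any $g\in\mathcal X$ with $\opa g=f$ and $\supp g\subset\ol\Omega$ would make $\tilde u-g$ $\opa$-closed on $\cb^2$, hence holomorphic by hypoellipticity, coinciding with $\tilde u|_D$ on $D$, a contradiction. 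Take $\tilde u=1/w\in L^1_{loc}(\cb^2)$ for the current category; the Whitney extension of $z^3/w\in\cc^1(\ol D)$ for the $L^2_{loc}$ case; and the Whitney extension of $z^{k+3}/w\in\cc^{k+1}(\ol D)$ for the $\cc^k$ case. In each situation the rational function $z^N/w$ has a pole along $\{w=0\}$, precluding continuation to an entire function. The main obstacle is the global convergence of the double series $\sum A_{mn}z^mw^n$ on all of $\cb^2$ in the smooth case, which is secured only after using the biholomorphism $\Phi$ to push the holomorphy of $\tilde u$ all the way to $\Delta\times\cb$ and then invoking the two-parameter Cauchy bound; a secondary task is the direct verification that $z^N/w$ has the asserted $\cc^{N-2}$ regularity along $\pa D$ near the corner point $(0,0)$.
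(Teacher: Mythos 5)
Your proposal is correct, and its overall architecture (reduce each cohomology group to the question of whether holomorphic functions on $D=\{|z|<|w|\}$ with a given boundary/ambient regularity extend to entire functions, then test with $z^N/w$) is the same as the paper's, which runs through Propositions \ref{CNsmooth}--\ref{CSsmooth} and their $\cc^k$, $L^p_{loc}$ and current analogues. Two of your steps genuinely diverge. First, for the vanishing of $H^{0,1}_{\ol\Omega,\infty}(\cb^2)$ the paper simply quotes Sibony's extension theorem for functions in $\ci(\ol\hb^+)\cap\oc(\hb^+)$; you instead give a self-contained power-series proof via the map $\Phi(\zeta,w)=(\zeta w,w)$, using smoothness at the collapsed fibre $\Delta\times\{0\}$ to force $\tilde a_{m,k}=0$ for $k<m$ and then resumming. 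I checked the chain-rule identity, the vanishing pattern of the coefficients, and the two-radius Cauchy bound giving entirety of $U$; all are sound, and your argument only needs extendability for functions smooth on all of $\cb^2$ (which is exactly what the cohomological reduction produces), so it is both more elementary and sufficient. Second, for the current category the paper deduces $H^{0,1}_{\ol\Omega,cur}\neq 0$ from the $\cc^k$ case via the injectivity of $H^{0,1}_{\ol\Omega,\cc^k}\to H^{0,1}_{\ol\Omega,cur}$ (Proposition \ref{injective}), whereas you exhibit the explicit class $\opa(1/w)$, a current supported on $\{w=0\}\subset\ol\Omega$; this is more direct and equally valid. Your $\cc^k$ and $L^2_{loc}$ examples ($z^{k+3}/w$ and $z^3/w$) coincide with or are marginally sharper than the paper's ($z^{l+1}/w$, $l\geq k+2$, and $z^4/w$). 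The only point you should not leave as a ``secondary task'' is the Whitney extendability of $z^N/w$ from the cone $\ol D$, whose only delicate point is the corner at the origin; the estimate $|\pa^\alpha(z^N/w)|\leq C|z|^{N-1-|\alpha|}\leq C|(z,w)|^{N-1-|\alpha|}$ on $\ol D$ gives the Whitney condition against the zero jet at $0$, and pairs of points away from the origin live where the set has smooth boundary — the paper invokes Whitney with exactly the same (implicit) verification, so you are on equal footing there.
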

In the case  when $\Omega$ is a  Hartogs triangle in $\cx\mathbb P^2$, we prove

\begin{thm}\label{cp^2}
 If $X=\cx\mathbb P^2$ and $\Omega=\{[z_0,z_1,z_2]\in\cx\mathbb P^2~|~|z_1|>|z_2|\}$, then  $H^{0,1}_{\ol\Omega,\infty}(X)=0$ and $H^{0,1}_{\ol\Omega,\cc^k}(X)=0$, $k\ge 0$,  but $H^{0,1}_{\ol\Omega,cur}(X)$ and  $H^{0,1}_{\ol\Omega,L^2}(X)\neq 0$ are infinite dimensional and Hausdorff.
 \end{thm}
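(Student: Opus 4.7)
The plan is a uniform ``lift and restrict'' strategy applied to the complementary open set $U:=\cx\mathbb P^2\setminus\ol\Omega=\{|z_1|<|z_2|\}$. First I would set up the geometry: $\Omega\subset U_1:=\{z_1\neq 0\}$ and, in coordinates $(z_0/z_1,z_2/z_1)$, $\Omega$ is biholomorphic to $\cb\times\Delta$; dually, $U\subset U_2:=\{z_2\neq 0\}$ and in coordinates $(u,v):=(z_0/z_2,z_1/z_2)$, $U$ is biholomorphic to $\cb_u\times\Delta_v$. The cone apex $[1,0,0]$ lies in neither $U_1$ nor $U_2$ and is the unique singular boundary point of $\ol\Omega$. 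For each regularity class $\mathcal R\in\{\cc^\infty,\cc^k,L^2,cur\}$ appearing in the statement, $H^{0,1}(\cx\mathbb P^2)$ vanishes in that category (compact-K\"ahler Hodge theory for the first three, sheaf cohomology of $\oc$ for the last). Given a $\opa$-closed datum $f$ of class $\mathcal R$ supported in $\ol\Omega$, one therefore gets a global primitive $G$ on $\cx\mathbb P^2$ with $\opa G=f$, and $h:=G|_U$ is a holomorphic function on $U$ of the appropriate class. The ambiguity in $G$ is a global holomorphic function on $\cx\mathbb P^2$, that is, a constant, so the correspondence $[f]\mapsto[h]$ descends to an injection
\[H^{0,1}_{\ol\Omega,\mathcal R}(\cx\mathbb P^2)\hookrightarrow\oc(U)/\cb\]
whose image consists of those $h\in\oc(U)$ admitting a global $\mathcal R$-primitive.

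For the vanishing when $\mathcal R=\cc^\infty$ or $\cc^k$, compactness of $\cx\mathbb P^2$ makes $G$ bounded, so in the $U_2$ chart $u\mapsto h(u,v)$ is a bounded entire function for each $v\in\Delta$, hence constant by Liouville: $h(u,v)=H(v)$ for some $H\in\oc(\Delta)$. Switching to the $U_0$ chart $(\zeta,\eta):=(z_1/z_0,z_2/z_0)$, in which $[1,0,0]$ is the origin, $v=\zeta/\eta$ and $U\cap U_0=\{|\zeta|<|\eta|\}$, we find $h=H(\zeta/\eta)$ near the origin within this cone. Approaching $(0,0)$ along the rays $\zeta=\lambda\eta$ with $|\lambda|<1$ yields $\lim h=H(\lambda)$; continuity at $[1,0,0]$, already enforced once $\mathcal R\supseteq\cc^0$, forces $H$ to be constant, so $h$ is constant and the cohomology vanishes.

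For $\mathcal R=L^2$ or currents, the Liouville step breaks down: the Fubini--Study density in the $U_2$ chart is asymptotic to $(1+|u|^2+|v|^2)^{-3}$, which integrates against entire polynomial growth of degree at most one in $u$. Thus the holomorphic functions $uv^m$ for $m\ge 0$ all belong to $\oc^{L^2}(U)$ and are linearly independent modulo $\cb$. Each such $h$ may be realised as $G|_U$ for a global $L^2$ primitive $G$ on $\cx\mathbb P^2$ of a suitable $\opa$-closed $L^2$ form supported in $\ol\Omega$ (for instance via a smooth cut-off of $h$ away from the pole divisor $\{z_2=0\}\subset\ol\Omega$ followed by a global $L^2$-Hodge correction), and by the injection above these give distinct non-zero classes in $H^{0,1}_{\ol\Omega,L^2}(\cx\mathbb P^2)$, \emph{a fortiori} in $H^{0,1}_{\ol\Omega,cur}(\cx\mathbb P^2)$. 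The Hausdorff property, which I expect to be the main analytic obstacle, reduces via the same identification to the closedness of $\oc^{L^2}(U)$ inside $L^2(\cx\mathbb P^2)$ (a standard Bergman-space fact, from continuity of point evaluations for $L^2$-holomorphic functions) and of the space of constants, together with an open-mapping argument transferring closed range from the quotient back to the $\opa$-operator on the subcomplex of forms supported in $\ol\Omega$; the current case is handled identically via the Fr\'echet topology on $\oc(U)$.
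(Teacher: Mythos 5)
Your reduction to the extension problem for holomorphic functions on $U=\hb^+=\{|z_1|<|z_2|\}$, via the injection $H^{0,1}_{\ol\Omega,\mathcal R}(\cx\mathbb P^2)\hookrightarrow\oc(U)/\cb$, is sound and matches the mechanism of Propositions \ref{CNcur}--\ref{CScur}. Your vanishing argument for $\cc^k$ and $\cc^\infty$ (Liouville in the $u$-direction, then continuity at the apex $[1:0:0]$ along the rays $\zeta=\lambda\eta$) is correct and is a pleasant variant of the paper's argument, which instead foliates $b\hb^+$ by the compact curves $S_\theta=\{z_1=e^{i\theta}z_2\}$, all passing through the apex, and uses that a continuous CR function on a compact Riemann surface is constant.

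The non-vanishing step, however, has a genuine gap: the functions $uv^m=z_0z_1^m/z_2^{m+1}$ cannot be realized as $G|_U$ for any global $G\in L^2(\cx\mathbb P^2)$ with $\opa G\in L^2_{0,1}(\cx\mathbb P^2)$. On the compact K\"ahler manifold $\cx\mathbb P^2$ one has $\norm{\opa G}_{L^2}=\norm{\partial G}_{L^2}$ for functions, so any such $G$ lies in $W^1(\cx\mathbb P^2)$ and hence $G|_U\in W^1(U)$; but in the chart $(\zeta,\eta)=(z_1/z_0,z_2/z_0)$ centred at the apex one has $u=1/\eta$, and $\int_{\{|\zeta|<|\eta|<\epsilon\}}|\eta|^{-4}\,dV=\pi\int_{|\eta|<\epsilon}|\eta|^{-2}\,dV(\eta)=\infty$, so $uv^m\notin W^1(U)$ for every $m\geq 0$. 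Concretely, your cut-off cannot avoid the apex, which lies in $\{z_2=0\}\cap\ol U$, so its gradient is at best $O(1/|\eta|)$ on a conical set where $|uv^m|\sim 1/|\eta|$, and the product $(\opa\chi)uv^m$ fails to be square-integrable there. The fix is the paper's choice: use the \emph{bounded} functions $f_k=(z_1/z_2)^k$, which do lie in $W^1(\hb^+)$, together with the conical cut-off $\chi\bigl(1+\tfrac13(1-|z|^2/|w|^2)\bigr)$ whose gradient is $O(1/|w|)$ and is paired with a bounded function; this yields $\opa$-closed forms $g_k=\opa\tilde f_k\in L^2_{0,1}$ supported in $\ol\hb^-$ with linearly independent classes. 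Finally, the Hausdorff assertion is not established by your sketch: closedness of the Bergman space in $L^2(U)$ does not by itself transfer to closed range of $\opa$ on $L^2$ functions supported in $\ol\Omega$. The paper obtains it from $L^2$ Serre duality, reducing it to the surjectivity of $\opa_s:L^2_{2,1}(\hb^-)\to L^2_{2,2}(\hb^-)$ (proved by solving on a Lipschitz neighbourhood and applying Friedrichs' lemma), and, for currents, to $H^{2,2}_\infty(\ol\hb^-)=0$ via Malgrange's theorem.
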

 The non-vanishing of $H^{0,1}_{\ol\Omega,L^2}(\cx\mathbb P^2)$ is especially interesting since it
   is in sharp contrast to the case of solving $\dbar$ with compact support for a bounded Hartogs triangle in $\cx^2$ (see  Remark 1 at the end of the paper).
 The infinite dimensionality of  $H^{0,1}_{\ol\Omega,L^2}(\cx\mathbb P^2)$ gives the following  result.
 Let  $\dbar_s$ be the  strong $L^2$ closure $\dbar_s: L^2_{2,0}(\Omega)\to L^2_{2,1}(\Omega)$, i.e., the completion of $\dbar$ on  smooth forms up to the boundary in the graph norm.
 Let  $H^{2,1}_{\dbar_s, L^2}(\Omega)$ be the quotient of the kernel of $\dbar_s$ over the range of $\dbar_s$, i.e. the Dolbeault cohomology with respect to the operator $\dbar_s$.
 \begin{cor}\label{dbar_s}    The space   $H^{2,1}_{\dbar_s, L^2}(\Omega)$ is infinite dimensional.
 \end{cor}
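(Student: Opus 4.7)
The plan is to derive the corollary from Theorem~\ref{cp^2} via an $L^2$ Serre duality identifying $H^{0,1}_{\ol\Omega,L^2}(\cx\mathbb{P}^2)$ with the topological dual of $H^{2,1}_{\opa_s,L^2}(\Omega)$, from which the infinite-dimensionality of the former will force that of the latter.

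First I would set up the bilinear pairing
$$(f,g)\longmapsto \int_\Omega f\wedge g,$$
defined for $f$ a $\opa$-closed $L^2$ $(0,1)$-form on $\cx\mathbb{P}^2$ supported in $\ol\Omega$ and $g$ a $\opa_s$-closed $L^2$ $(2,1)$-form on $\Omega$. The wedge $f\wedge g$ is a $(2,2)$-form lying in $L^1(\Omega)$, so the integral makes sense. Next I would verify that the pairing descends to cohomology classes. If $g=\opa_s h$, I approximate $h$ by smooth $(2,0)$-forms $h_j$ on $\ol\Omega$ with $h_j\to h$ and $\opa h_j\to \opa_s h$ in $L^2$; Stokes' theorem applied to each $h_j$ together with $\opa f=0$ lets one pass to the limit. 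If instead $f=\opa u$ with $u\in L^2(\cx\mathbb{P}^2)$ supported in $\ol\Omega$, a symmetric approximation of $g$ by smooth-up-to-boundary $(2,1)$-forms yields the analogous vanishing, the boundary term dropping because $u$ lies in $\Dom(\opa_{\max})$ on $\cx\mathbb{P}^2$ with support in $\ol\Omega$ and hence has vanishing trace on $\partial\Omega$.

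Once the pairing is in place, I would invoke the $L^2$ Serre duality framework developed in \cite{LaShdualiteL2}: Hausdorffness of one cohomology is equivalent to Hausdorffness of the other, and in that case the pairing identifies each with the topological dual of the other. Since Theorem~\ref{cp^2} asserts that $H^{0,1}_{\ol\Omega,L^2}(\cx\mathbb{P}^2)$ is Hausdorff, so is $H^{2,1}_{\opa_s,L^2}(\Omega)$, and one obtains
$$H^{0,1}_{\ol\Omega,L^2}(\cx\mathbb{P}^2)\;\cong\;\bigl(H^{2,1}_{\opa_s,L^2}(\Omega)\bigr)'.$$
Because the topological dual of a finite-dimensional Hausdorff locally convex space is finite dimensional, infinite-dimensionality of the left-hand side forces that of the right-hand side, and the corollary follows.

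The main technical hurdle is justifying the integration-by-parts step that makes the pairing factor through cohomology, since neither $u$ nor $g$ need be smooth up to $\partial\Omega$. The remedy is the density of smooth-up-to-boundary forms in $\Dom(\opa_s)$ (built into the definition of $\opa_s$), together with the fact that an $L^2$ form on $\cx\mathbb{P}^2$ supported in $\ol\Omega$ with $L^2$ distributional $\opa$ has vanishing trace along $\partial\Omega$; these are exactly the ingredients packaged in the $L^2$ Serre-duality machinery of \cite{LaShdualiteL2}, so that modulo citing that reference the corollary reduces to reading off the conclusion of Theorem~\ref{cp^2}.
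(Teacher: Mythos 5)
Your overall strategy---dualizing against $H^{0,1}_{\ol\Omega,L^2}(\cx\mathbb P^2)$, which Theorem~\ref{cp^2} shows is infinite dimensional---is the right one and is essentially the paper's, but there is a genuine gap at the step where you transfer Hausdorffness across the duality. In the $L^2$ Serre duality of \cite{CS2012}, Hausdorffness of $H^{0,1}_{\ol\Omega,L^2}(\cx\mathbb P^2)$ means that $\dbar_{\tilde c}\colon L^2(\cx\mathbb P^2)\to L^2_{0,1}(\cx\mathbb P^2)$ has closed range; by the closed range theorem for Hilbert space adjoints this is equivalent to closed range of $\dbar_s\colon L^2_{2,1}(\Omega)\to L^2_{2,2}(\Omega)$, i.e.\ to Hausdorffness of $H^{2,2}_{\dbar_s,L^2}(\Omega)$ --- \emph{not} of $H^{2,1}_{\dbar_s,L^2}(\Omega)$. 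The latter is governed by the range of $\dbar_s\colon L^2_{2,0}(\Omega)\to L^2_{2,1}(\Omega)$, and whether that range is closed is explicitly listed as an open problem in the remarks at the end of the paper. Moreover, the duality isomorphism $H^{2,1}_{\dbar_s,L^2}(\Omega)\cong\bigl(H^{0,1}_{\ol\Omega,L^2}(\cx\mathbb P^2)\bigr)'$ requires closed range of $\dbar_s$ in both degrees $(2,1)$ and $(2,2)$, so you cannot read it off from the known Hausdorffness of the $(0,1)$ group alone. As written, your proof asserts an implication that is not known to hold.

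The paper closes this gap with a dichotomy. If $\dbar_s\colon L^2_{2,0}(\Omega)\to L^2_{2,1}(\Omega)$ does not have closed range, then $H^{2,1}_{\dbar_s,L^2}(\Omega)$ is non-Hausdorff and therefore automatically infinite dimensional (a finite-dimensional quotient of the kernel by the range of a closed densely defined operator would force that range to be closed). If it does have closed range, then together with Lemma~\ref{range} (surjectivity of $\dbar_s$ onto $L^2_{2,2}(\Omega)$) the full $L^2$ Serre duality applies and yields $H^{2,1}_{\dbar_s,L^2}(\Omega)\cong H^{0,1}_{\ol\Omega,L^2}(\cx\mathbb P^2)$, which is infinite dimensional by Theorem~\ref{infinite}. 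Your argument becomes correct once you insert this case distinction (or, equivalently, argue by contradiction that finite dimensionality would force closed range and hence validate the duality); the pairing you set up and your treatment of the boundary terms are otherwise consistent with the machinery of \cite{CS2012}.
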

 It is not known if $\dbar_s$ agrees with the weak $L^2$ extension or if the range of $\dbar_s$ is closed.
If  the domain $\Omega$  is bounded and Lipschitz, then the weak and strong closure are the same from the Friedrichs' lemma. The Hartogs triangle is a candidate that the weak and strong closure of $\dbar$ might not be the same.

The vanishing of the Dolbeault cohomology groups  with prescribed support in $\ol\Omega$ in bidegree $(0,1)$ is directly related to the extension of holomorphic functions defined on the complement of $\Omega$. This implies the following result:
\begin{prop}
Let $X$ be a complex manifold and $\Omega\subset X$ a domain in $X$. Assume $H^{0,1}_{\ol\Omega,\infty}(X)=0$, then $X\setminus\Omega$ is connected. If moreover $X$ is not compact, $H^{0,1}_c(X)=0$ and $\Omega$ is relatively compact, then $H^{0,1}_{\ol\Omega,\infty}(X)=0$ if and only if $X\setminus\Omega$ is connected.
\end{prop}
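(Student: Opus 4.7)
I treat the two implications separately. In both cases the strategy is to turn a smooth cutoff function into a globally defined holomorphic function by solving $\opa$ with prescribed support, and then to apply the identity principle.

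For the first implication, I argue by contradiction. Suppose $X\setminus\Omega$ splits as a disjoint union of two non-empty closed subsets $A$ and $B$. Using normality of the complex manifold $X$, I pick $\chi\in\ci(X)$ with values in $[0,1]$ such that $\chi\equiv 1$ in an open neighborhood of $A$ and $\chi\equiv 0$ in an open neighborhood of $B$. Then $\opa\chi$ is smooth, $\opa$-closed, and its support lies in the transition zone where $\chi$ is non-constant, which is contained in $\Omega\subset\ol\Omega$. The vanishing of $H^{0,1}_{\ol\Omega,\infty}(X)$ produces $g\in\ci(X)$ with $\supp g\subset\ol\Omega$ and $\opa g=\opa\chi$, so $F:=\chi-g$ is holomorphic on $X$. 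On the open set $X\setminus\ol\Omega$, $g$ vanishes identically, hence $F=\chi$ is real-valued there; a holomorphic function on the connected complex manifold $X$ that is real-valued on a non-empty open set is constant, so $F\equiv c\in\rb$. But $\chi=1$ at every point of $A\cap(X\setminus\ol\Omega)$ and $\chi=0$ at every point of $B\cap(X\setminus\ol\Omega)$, forcing $c=1$ and $c=0$ simultaneously, a contradiction.

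For the converse in the second statement, under the additional hypotheses let $f$ be a smooth $\opa$-closed $(0,1)$-form on $X$ with $\supp f\subset\ol\Omega$. Since $\Omega$ is relatively compact, $\supp f$ is compact, and $H^{0,1}_c(X)=0$ furnishes $g\in\ci_c(X)$ with $\opa g=f$. I must show $\supp g\subset\ol\Omega$. Off $\ol\Omega$, $\opa g=0$, so $g$ is holomorphic on $X\setminus\ol\Omega$; and since $g$ has compact support and $X$ is non-compact, $g$ vanishes outside some compact set $K\subset X$. Thus on every connected component of $X\setminus\ol\Omega$ that is not contained in $K$, the identity principle forces $g\equiv 0$. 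The connectedness of $X\setminus\Omega$, which links every component of $X\setminus\ol\Omega$ to the ``exterior'' component through $\partial\Omega$, is then used to propagate this vanishing to the remaining (relatively compact) components of $X\setminus\ol\Omega$, so that $g\equiv 0$ on $X\setminus\ol\Omega$ and $\supp g\subset\ol\Omega$.

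The main obstacle I foresee is the last step of the converse: going from the topological hypothesis that $X\setminus\Omega$ is connected to the analytic conclusion that the holomorphic function $g$ vanishes on every connected component of the open set $X\setminus\ol\Omega$. This is the one place where the geometric assumption must be transformed into an analytic statement, and it presumably requires reading off how the relatively compact components of $X\setminus\ol\Omega$ are attached to the exterior through $\partial\Omega$ inside $X\setminus\Omega$, so that a continuation or boundary argument can carry the vanishing of $g$ across the common boundary from the unbounded pieces to the bounded ones.
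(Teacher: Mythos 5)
Your first implication is correct and is essentially the paper's own argument with the intermediate lemmas unwound: the paper first shows that $H^{0,1}_{\ol\Omega,\infty}(X)=0$ forces every holomorphic function on $\Omega^c=X\setminus\ol\Omega$ that is smooth on $X\setminus\Omega$ to extend holomorphically to $X$, and then feeds in the locally constant function equal to $1$ on one component and $0$ on the others; your cutoff $\chi$ performs both steps at once. Note that you are tacitly using the standing hypotheses of Section 2 of the paper, namely that $X$ is connected, $\ol\Omega\neq X$ and $\mathrm{int}(\ol\Omega)=\Omega$; the last condition is what guarantees that each of your sets $A$ and $B$ actually meets the open set $X\setminus\ol\Omega$, so that the constant value of $F$ is forced to be both $1$ and $0$.

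The converse contains a genuine gap, and it is exactly the step you flagged. The vanishing of the holomorphic function $g$ on the non-relatively-compact components of $X\setminus\ol\Omega$ does not propagate to a relatively compact component merely because the two components are joined inside the connected set $X\setminus\Omega$ through $\pa\Omega$: across $\pa\Omega$ the function $g$ is only smooth, not holomorphic, and there is no identity principle that carries the vanishing of a holomorphic function from one component of an open set to another just because their closures touch. (A model for the difficulty: in $\cb^2$ take $\Omega$ to be a large ball with a closed internally tangent ball removed; then $X\setminus\Omega$ is connected while $X\setminus\ol\Omega$ has a relatively compact component, and your argument gives no control of $g$ there.) The paper avoids this entirely: its Proposition 2.10 takes as hypothesis that $\Omega^c=X\setminus\ol\Omega$ itself is connected. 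Then $\Omega^c$ is a single component, which cannot be relatively compact since otherwise $X=\ol\Omega\cup\ol{\Omega^c}$ would be compact; hence it meets the region where the compactly supported solution vanishes, and the identity theorem gives $\supp g\subset\ol\Omega$ in one stroke. Formally this is the dichotomy of Proposition 1.1: the support of a solution either misses a connected component of the complement of $\supp T$ or contains it, and the second alternative is excluded by compactness. So the repair is not to look for a boundary continuation argument, but to use the hypothesis in the form ``$X\setminus\ol\Omega$ is connected,'' which is how the equivalence is actually stated and proved in the body of the paper; with the literal hypothesis ``$X\setminus\Omega$ connected'' your final step cannot be completed as written.
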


We also prove some characterization of pseudoconvexity in terms of Dolbeault cohomology with prescribed support.

\begin{thm}
Let $D$ be a bounded domain in $\cb^2$ with Lipschitz boundary. Then the following assertions are equivalent:

(i) $D$ is a pseudoconvex domain;

(ii) $H^{0,1}_{\ol D,\infty}(\cb^2)=0$ and $H^{0,2}_{\ol D,\infty}(\cb^2)$ is Hausdorff.
\end{thm}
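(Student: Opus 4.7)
The plan is to deduce the equivalence from the Serre duality between prescribed-support cohomology on $\cb^2$ and boundary-smooth Dolbeault cohomology on $\ol D$, combined with the classical characterization of pseudoconvexity via the vanishing of $H^{0,1}_\infty(\ol D)$.

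For (i) $\Rightarrow$ (ii), assume $D$ is bounded, Lipschitz and pseudoconvex. First I would deduce that $\cb^2\setminus D$ is connected: a bounded component $U$ of $\cb^2\setminus\ol D$ would be surrounded by $D$, and Hartogs' extension (available since $n=2\ge 2$) would force every $h\in\oc(D)$ to extend holomorphically across $U$, contradicting the domain-of-holomorphy property of $D$. Combined with $H^{0,1}_c(\cb^2)=0$ (Bochner-Hartogs), the earlier Proposition then gives $H^{0,1}_{\ol D,\infty}(\cb^2)=0$. For the Hausdorff property of $H^{0,2}_{\ol D,\infty}(\cb^2)$, I would invoke the Serre-duality framework of \cite{LaShdualiteL2}, under which
\[H^{0,2}_{\ol D,\infty}(\cb^2) \cong H^{2,0}_\infty(\ol D)^*;\]
the right-hand side is the dual of the Fr\'echet space $\oc(\ol D)\cdot dz_1\wedge dz_2$, hence Hausdorff. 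The topological identification requires $\opa\colon\ci_{0,1}(\cb^2,\ol D)\to\ci_{0,2}(\cb^2,\ol D)$ to have closed range, which is provided by the closed-range property of $\opa$ on bounded pseudoconvex Lipschitz domains (H\"ormander, Kohn, Michel-Shaw) transferred through the duality.

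For (ii) $\Rightarrow$ (i), the Hausdorff hypothesis on $H^{0,2}_{\ol D,\infty}(\cb^2)$ is precisely what makes the Serre duality isomorphism
\[H^{0,1}_{\ol D,\infty}(\cb^2) \cong H^{2,1}_\infty(\ol D)^*\]
valid. Hence the assumed vanishing of the left-hand side forces $H^{2,1}_\infty(\ol D)=0$, and multiplication by the nowhere vanishing holomorphic $(2,0)$-form $dz_1\wedge dz_2$ yields the equivalent statement $H^{0,1}_\infty(\ol D)=0$. I would conclude by invoking the classical theorem that for a bounded Lipschitz domain in $\cb^n$, $n\ge 2$, the vanishing of $H^{0,1}_\infty(\ol D)$ implies that $D$ is pseudoconvex; the contrapositive is proved by a Hartogs-figure construction which, from a boundary point where the Levi form admits a negative eigenvalue, builds a $\opa$-closed smooth $(0,1)$-form on $\ol D$ that is not $\opa g$ for any $g$ smooth up to the boundary.

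The principal technical obstacle is the rigorous deployment of Serre duality for prescribed-support cohomology on a bounded Lipschitz domain: one must verify both the algebraic identification with $H^{n-r,n-q}_\infty(\ol D)^*$ and the simultaneous topological (Hausdorff) transfer. This is exactly why (ii) is formulated as one vanishing together with one Hausdorff statement — the two together are what Serre duality can relate directly to the vanishing of $H^{0,1}_\infty(\ol D)$. The auxiliary classical converse used at the end of (ii) $\Rightarrow$ (i) is the other potentially delicate point; it relies on the Kohn-H\"ormander $\opa$-Neumann theory in its Lipschitz refinement or, dually, on explicit Hartogs-style constructions of obstruction forms.
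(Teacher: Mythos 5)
Your overall architecture (connectedness plus Proposition \ref{CNconnected} for the vanishing part of (i) $\Rightarrow$ (ii), Serre duality to convert (ii) into a cohomological statement on $D$, then a cohomological criterion for pseudoconvexity) matches the paper's, but two of your key steps have genuine gaps. The first is the identification of the Serre dual. Smooth forms on $\cb^2$ supported in $\ol D$ pair with \emph{currents on $D$ that extend to $\cb^2$}, so the dual of $H^{0,1}_{\ol D,\infty}(\cb^2)$ is the extendable-current cohomology $\check{H}^{2,1}(D)$, not $H^{2,1}_\infty(\ol D)$ (forms smooth up to the boundary); the latter is instead dual to the prescribed-support cohomology in the \emph{current} category. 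Consequently, in (ii) $\Rightarrow$ (i) what you actually obtain is $\check{H}^{2,1}(D)=0$, and the ``classical theorem'' you then invoke is both about the wrong group and not available in the generality you need: for a merely Lipschitz, non-pseudoconvex domain there is no $C^2$ boundary point with a negative Levi eigenvalue to hang a Hartogs figure on. This is precisely where the paper supplies the missing idea, namely Laufer's method: assuming non-pseudoconvexity one gets an envelope $\wt D\ni 0$ with $0\notin\ol D$, solves $\opa v=B$ on $D$ with $B$ the $(2,1)$-form derived from the Bochner--Martinelli kernel (using $\check{H}^{2,1}(D)=0$), and exhibits the holomorphic $(2,0)$-form $F=z_1v+\tfrac{\ol z_2}{|z|^2}\,dz_1\wedge dz_2$, whose coefficient restricts to $1/z_2$ on $\{z_1=0\}$ and therefore cannot extend across $0$. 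Without this (or an equivalent) construction your (ii) $\Rightarrow$ (i) does not close.

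The second gap is the Hausdorff property of $H^{0,2}_{\ol D,\infty}(\cb^2)$ in (i) $\Rightarrow$ (ii). The assertion that this group is Hausdorff ``because it is isomorphic to the dual of a Fr\'echet space'' is a non sequitur: Hausdorffness of a quotient is the statement that the range of $\opa\colon\ci_{0,1}(\cb^2,\ol D)\to\ci_{0,2}(\cb^2,\ol D)$ is closed, and an algebraic identification of the quotient with a nice space proves nothing about that. Your fallback, ``closed range on bounded pseudoconvex Lipschitz domains transferred through the duality,'' is not substantiated either, since the relevant dual complex is that of extendable currents on $D$, not the $L^2$ or $\opa$-Neumann setting where the closed-range results you cite live. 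The paper proves this point directly: it quotes Theorem 5 of \cite{CS2012} to get that $H^{0,1}_\infty(\cb^2\setminus D)$ is Hausdorff when $D$ is pseudoconvex with Lipschitz boundary, and then, given $f$ supported in $\ol D$ and orthogonal to all $\opa$-closed extendable $(2,0)$-currents, solves $\opa g=f$ on $\cb^2$, checks via $H^{2,1}_c(\cb^2)=0$ that $g$ is orthogonal to the compactly supported $\opa$-closed $(2,1)$-currents on $\cb^2\setminus D$, obtains $h$ with $\opa h=g$ there, and corrects $g$ by $\opa\wt h$ to land in the prescribed-support range. You would need to supply an argument of this kind (or an actual closed-range theorem for the extendable-current complex) to complete this direction.
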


The plan of this paper is as follows: In section 1, we recall some basic  properties of the support and the uniqueness of the solution for $\dbar$. In section 2 we discuss solving $\dbar$ with prescribed support and its relations with the holomorphic extension of functions in various function spaces. In section 3, we study the non-vanishing of Dolbeault cohomology
with prescribed support on the unbounded Hartogs triangle in $\cb^2$.  We analyse the Hartogs triangles in $\cx\mathbb P^2$ in section 4.  Theorems \ref{nonisom} and \ref{cp^2}   provide  interesting examples which give
the non-vanishing for the Dolbeault cohomology groups.  This is in  sharp contrast with the well-known results of solving $\dbar$ for (0,1)-forms  with prescribed support  for bounded domain in $\cx^n$.    We  prove Corollary \ref{dbar_s} using $L^2$  Serre duality. This gives us some insight about the intriguing problem  on weak and strong extension of the $\dbar$ operator in the $L^2$ sense, when the domain is not Lipschitz. The unbounded Hartogs domain in $\cb^2$  or Hartogs domains in $\cx\mathbb P^2$  provide us with   new     unexpected phenomena.   Many   open questions   and remarks   are given at the end of the paper.

\bigskip
\noindent
{\em Acknowledgements:} Mei-Chi Shaw would like to thank Phil Harrington  for helpful discussions on  the extension of functions from the Hartogs triangle.

Both authors were partially supported by a grant from the  AGIR program of Grenoble INP  and Universit{\'e} Grenoble-Alpes, awarded to Christine Laurent-Thi\'ebaut.  Mei-Chi Shaw is partially supported by an NSF grant.

\section{Properties of  the support and uniqueness of the solution}

Let $X$ be a complex manifold of complex dimension $n$ and $T$ be a $\opa$-exact $(0,1)$-current on $X$. We will describe some relations between the support of the current $T$ and the support of  the  solution $S$  of the Cauchy-Riemann equation $\opa S=T$.

\begin{prop}\label{support}
Let $X$ be a complex manifold of complex dimension $n$ and $T$ be a $\opa$-exact $(0,1)$-current on $X$. If $\Omega^c$ denotes a connected component of $X\setminus{\rm supp}~T$ and if $S$ is a distribution on $X$ such that $\opa S=T$, then either ${\rm supp}~S\cap\Omega^c=\emptyset$ or $\Omega^c\subset{\rm supp}~S$.
\end{prop}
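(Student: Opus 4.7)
The plan is to exploit the fact that on the open set $\Omega^c$ the current $S$ is $\opa$-closed, hence holomorphic by hypoellipticity, and then apply the identity principle on the connected component $\Omega^c$.

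First I would restrict everything to $\Omega^c$. Since $\Omega^c$ is a connected component of $X\setminus\supp T$, it is an open subset of $X$ disjoint from $\supp T$, so $T|_{\Omega^c}=0$ as a current. The equation $\opa S=T$ on $X$ therefore gives $\opa(S|_{\Omega^c})=0$ in the sense of distributions. By the standard regularity/hypoellipticity of $\opa$ acting on distributions (or by the Dolbeault--Grothendieck lemma applied to distributions), the restriction $S|_{\Omega^c}$ is a holomorphic function on $\Omega^c$.

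Next I would apply the identity principle on the connected open set $\Omega^c$. Let $f:=S|_{\Omega^c}$, a holomorphic function. If $f\equiv 0$ on $\Omega^c$, then $S$ vanishes on $\Omega^c$ as a distribution, which means $\supp S\cap\Omega^c=\emptyset$, giving the first alternative. Otherwise $f\not\equiv 0$, and since $\Omega^c$ is connected, the zero set $\{f=0\}$ has empty interior in $\Omega^c$. Consequently every point of $\Omega^c$ is a limit of points of $\{f\neq 0\}$, and hence belongs to the closure of $\{S\neq 0\}$, i.e. to $\supp S$. This yields $\Omega^c\subset\supp S$, the second alternative.

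There is no real obstacle: the only points to be a little careful about are the distributional regularity statement ($\opa$-closed distributions on an open set of a complex manifold are holomorphic functions) and the fact that vanishing of a distribution on an open set is equivalent to the support being disjoint from that open set. Both are standard, so the proof reduces to a clean dichotomy: $f\equiv 0$ versus the identity principle forcing $\supp f = \Omega^c$.
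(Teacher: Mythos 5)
Your proof is correct and follows essentially the same route as the paper's: hypoellipticity of $\opa$ makes $S$ holomorphic on $\Omega^c$, and the identity principle on the connected set $\Omega^c$ gives the dichotomy (the paper phrases it contrapositively: if $\supp S$ misses an open subset of $\Omega^c$, analytic continuation forces $S\equiv 0$ there). Your extra care in spelling out that a nonzero holomorphic function has zero set with empty interior, hence support all of $\Omega^c$, is a fine elaboration of the same argument.
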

\begin{proof}
Note that, since $\opa S=T$, $S$ is a holomorphic function on $X\setminus{\rm supp}~T$ and in particular on the connected set $\Omega^c$. Assume that the support of $S$ does not contain $\Omega^c$, then $S$ vanishes on an open subset of $\Omega^c$ and by analytic continuation $S$ vanishes on $\Omega^c$, which means that ${\rm supp}~S\cap\Omega^c=\emptyset$.
\end{proof}

\begin{cor}
Let $X$ be a complex manifold of complex dimension $n$ and $T$ be a $\opa$-exact $(0,1)$-current on $X$. Assume that $X\setminus{\rm supp}~T$ is connected, then if $S$ is a distribution on $X$ such that $\opa S=T$, then either ${\rm supp}~S={\rm supp}~T$ or ${\rm supp}~S=X$.
\end{cor}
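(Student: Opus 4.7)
The plan is to deduce this corollary directly from the preceding Proposition \ref{support} by applying it to the single connected component $\Omega^c = X \setminus \operatorname{supp} T$ and combining the conclusion with the elementary inclusion $\operatorname{supp} T \subset \operatorname{supp} S$.

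First I would record the auxiliary fact that for any distribution $S$ with $\opa S = T$, one automatically has $\operatorname{supp} T \subset \operatorname{supp} S$. Indeed, if $p \notin \operatorname{supp} S$, then $S$ vanishes on an open neighbourhood $U$ of $p$; since $\opa$ is a local differential operator, $T = \opa S$ also vanishes on $U$, so $p \notin \operatorname{supp} T$. This step, though short, is the essential additional input beyond Proposition \ref{support}, and is really the only substantive point in the argument.

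Next I would apply Proposition \ref{support}. Because $X \setminus \operatorname{supp} T$ is connected by hypothesis, it constitutes its own unique connected component, so the proposition gives the dichotomy
\[
\operatorname{supp} S \cap (X \setminus \operatorname{supp} T) = \emptyset
\quad\text{or}\quad
X \setminus \operatorname{supp} T \subset \operatorname{supp} S.
\]

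Finally I would translate each alternative into the desired conclusion using the inclusion from the first paragraph. In the first case, $\operatorname{supp} S \subset \operatorname{supp} T$, which combined with $\operatorname{supp} T \subset \operatorname{supp} S$ yields $\operatorname{supp} S = \operatorname{supp} T$. In the second case, $X \setminus \operatorname{supp} T \subset \operatorname{supp} S$, and together with $\operatorname{supp} T \subset \operatorname{supp} S$ this forces $\operatorname{supp} S = X$. Since there is no genuine obstacle beyond the observation in the second paragraph, the whole proof should fit comfortably in a few lines.
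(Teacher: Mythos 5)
Your argument is correct and coincides with the paper's own proof: both rest on the elementary inclusion $\operatorname{supp} T \subset \operatorname{supp} S$ (locality of $\opa$) together with the dichotomy of Proposition \ref{support} applied to the single connected component $X \setminus \operatorname{supp} T$. The paper merely phrases the case split as ``if $\operatorname{supp} S \neq X$ then the reverse inclusion holds,'' which is the same reasoning in contrapositive form.
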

\begin{proof}
The support of $T$ is always contained in the support of $S$. If ${\rm supp}~S\neq X$, then the other inclusion holds by  Proposition 1.1  since $X\setminus{\rm supp}~T$ is connected.
\end{proof}

Note that the difference between two solutions of the equation $\opa S=T$ is a holomorphic function on $X$. Then analytic continuation implies the following  uniqueness result.

\begin{prop}\label{uniqueness}
Assume that the complex manifold $X$ is connected. Let  $T$ be a $\opa$-exact $(0,1)$-current on $X$ such that $X\setminus{\rm supp}~T\neq\emptyset$ and $S$ and $U$ two distributions such that
$$\opa S=\opa U=T$$
and the support of $S$ and the support of $U$ do not intersect on  the same connected component $\Omega^c$ of $X\setminus{\rm supp}~T$, then $S=U$.

In particular, the equation $\opa S=T$ admits at most one solution $S$ such that ${\rm supp}~S={\rm supp}~T$.
\end{prop}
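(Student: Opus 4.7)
The plan is to apply analytic continuation to the difference $V=S-U$, exactly as suggested by the sentence preceding the statement. First I would set $V=S-U$; since $\opa S=\opa U=T$ in the sense of distributions on $X$, we get $\opa V=0$ globally on $X$, so $V$ is a distribution whose $\opa$ vanishes on all of $X$. By elliptic regularity for the Cauchy--Riemann operator (which has already been used implicitly in the proof of Proposition 1.1 above), $V$ is in fact a holomorphic function on all of $X$.

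Next I would use the support hypothesis. By assumption there is a connected component $\Omega^c$ of $X\setminus\supp T$ such that neither $\supp S$ nor $\supp U$ meets $\Omega^c$; consequently $V=S-U$ vanishes identically on $\Omega^c$. Since $X$ is connected and $\Omega^c$ is a non-empty open subset of $X$, the identity principle for holomorphic functions forces $V\equiv 0$ on $X$, i.e., $S=U$. This is the whole argument; there is no real obstacle, since Proposition \ref{support} has already done the structural work of constraining how the support of a solution can meet the connected components of $X\setminus\supp T$.

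For the ``in particular'' assertion, I would argue as follows. Suppose $S_1,S_2$ are two solutions of $\opa S=T$ with $\supp S_1=\supp S_2=\supp T$. Because $X\setminus\supp T\neq\emptyset$ by hypothesis, it has at least one connected component $\Omega^c$, and by construction neither $\supp S_1$ nor $\supp S_2$ intersects $\Omega^c$. The first part of the proposition then applies with $S=S_1$, $U=S_2$ and yields $S_1=S_2$, establishing uniqueness within the class of solutions whose support coincides with $\supp T$.
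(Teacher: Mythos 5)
Your argument is correct and is exactly the proof the paper intends: the paper states the result immediately after the remark that the difference of two solutions is holomorphic on $X$ and that analytic continuation then gives uniqueness, which is precisely your $V=S-U$ argument. Both the main assertion and the ``in particular'' clause are handled as the paper would, so there is nothing to add.
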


\begin{rem}
The equation $\opa S=T$ may have no solution $S$ with ${\rm supp}~S={\rm supp}~T$. Consider for example a relatively compact domain $D$ with $\ci$-smooth boundary in a complex manifold $X$ and a function $F\in\ci(\ol D)$ which is holomorphic in $D$. Denote by $f$ the restriction of $F$ to the boundary of $D$ and set $S=F\chi_D$, where $\chi_D$ is the characteristic function of the domain $D$. Then, by the Stokes formula, $\opa S=f[\pa D]^{0,1}$, where $[\pa D]^{0,1}$ is the part of bidegree $(0,1)$ of the integration current over the boundary of $D$. Clearly the support of $T=f[\pa D]^{0,1}$ is the boundary of $D$, but, by Proposition \ref{uniqueness}, $S$ is the unique solution of $\opa S=T$ whose support is contained in $\ol D$. So there is no solution whose support is equal  to the support of $T$.
\end{rem}

Let us end this section by considering the regularity of the solutions.
\begin{prop}\label{reg}
Let $X$ be a complex manifold and $f$ a $(0,1)$-form with coefficients in $\cc^k(X)$, $0\leq k\leq +\infty$ (resp. $L^p_{loc}(X)$, $1\leq p\leq +\infty$), which is $\opa$-exact in the sense of currents. Then any solution $g$ of the equation $\opa g=f$ is in $\cc^k(X)$, $0\leq k\leq +\infty$ (resp. $L^p_{loc}(X)$, $1\leq p\leq +\infty$).
\end{prop}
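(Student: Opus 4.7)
The plan is to reduce to a local question, solve $\opa$ locally with the desired regularity via an integral kernel, and then use the hypoellipticity of $\opa$ on functions to handle the remaining piece. Since $\cc^k$ and $L^p_{\loc}$ regularity are local conditions, it suffices to prove the assertion on any sufficiently small coordinate ball $U\subset X$; I will in fact work on a slightly smaller concentric ball $V\Subset U$.

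First, $\opa g=f$ as currents automatically yields $\opa f=\opa^2 g=0$ as currents (vacuous when $n=1$). This is precisely the compatibility condition required to apply a local $\opa$-homotopy operator: take $K$ to be a Bochner--Martinelli--Koppelman integral operator on $U$ (or the Cauchy transform when $n=1$) and set $h:=Kf$, so that $\opa h=f$ on $V$. Classical pointwise estimates for the Bochner--Martinelli kernel give $h\in\cc^k(V)$ whenever $f\in\cc^k(U)$ for $0\le k\le+\infty$; the corresponding $L^p$ estimates for the associated weakly singular integral give $h\in L^p_{\loc}(V)$ whenever $f\in L^p_{\loc}(U)$ for $1\le p\le+\infty$.

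Next, the distribution $u:=g-h$ satisfies $\opa u=0$ on $V$, i.e.\ $\partial u/\partial\ol z_j=0$ for every $j$ in the distributional sense. Taking one more $\partial/\partial z_j$ derivative and summing yields $\Delta u=0$ in the distributional sense, so by Weyl's lemma $u\in\ci(V)$; then $\opa u=0$ holds pointwise and $u$ is holomorphic. Hence $g=u+h$ inherits on $V$ the regularity of $h$, which is the stated regularity of $f$.

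The only technical point is the local mapping property of $K$ in each function class, and since this amounts to standard kernel estimates (Hörmander, Henkin--Leiterer) no new analytic input is required. The mildest cases, $k=0$ and $p=1$ or $p=+\infty$, are the ones where one must be slightly careful with the singular integral bounds, but they are still classical; I do not anticipate a genuine obstacle here.
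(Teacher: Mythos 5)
Your argument is correct, and it is in substance the same two-step decomposition the paper uses: produce one solution of $\opa h=f$ with the regularity of $f$, then observe that $g-h$ is a distributional solution of $\opa u=0$, hence holomorphic, so $g$ inherits the regularity of $h$. The only real difference is where the regular solution comes from: the paper invokes the global statement (injectivity of the Dolbeault isomorphism in the $\cc^k$ and $L^p_{loc}$ categories, cited from Henkin--Leiterer and \cite{LaLp}) to get a solution $S$ on all of $X$ with the right regularity, whereas you localize to a coordinate ball and build the solution by hand with a Bochner--Martinelli--Koppelman type operator, finishing with Weyl's lemma. Your version is more self-contained and avoids the global machinery entirely (which is legitimate, since both $\cc^k$ and $L^p_{loc}$ regularity are local notions); the price is that you must be careful with the homotopy formula for forms with merely $L^1_{loc}$ coefficients --- the boundary integral in the Koppelman formula has no meaning for such $f$, so one should apply the formula to a cutoff $\chi f$ and absorb the resulting smooth, $\opa$-closed error term on the smaller ball, exactly the standard device. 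With that (classical) point made explicit, the proof is complete and equivalent in strength to the paper's.
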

\begin{proof}
By the regularity of the Cauchy-Riemann operator (injectivity of the Dolbeault isomorphism \cite{HeLe2} and \cite{LaLp}), if $f$ has coefficients in $\cc^k(X)$, $0\leq k\leq +\infty$ (resp. $L^p_{loc}(X)$, $1\leq p\leq +\infty$), then, since $f$ is $\opa$-exact in the sense of currents, the equation $\opa S=f$ has a solution in $\cc^k(X)$, $0\leq k\leq +\infty$ (resp. $L^p_{loc}(X)$, $1\leq p\leq +\infty$). The difference between two solutions of the equation $\opa S=f$ being a holomorphic function on $X$, all the solutions have the same regularity.
\end{proof}

Associating Proposition \ref{uniqueness} and Proposition \ref{reg}, we get:
\begin{cor}
Assume that the complex manifold $X$ is connected. If $f$ is a $(0,1)$-form such that $X\setminus {\rm supp}~f\neq\emptyset$, then the equation $\opa g=f$ has at most one unique solution such that ${\rm supp}~g={\rm supp}~f$ and this solution has the same regularity as $f$.
\end{cor}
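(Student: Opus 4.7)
The plan is to simply package the two preceding propositions. Uniqueness is delivered by Proposition \ref{uniqueness} and regularity by Proposition \ref{reg}; the only thing to check is that the hypotheses of both apply to the setting of the corollary.

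First, I would handle uniqueness. Suppose $g_1$ and $g_2$ are two solutions of $\opa g = f$ with $\supp g_i = \supp f$. Since $X\setminus\supp f \neq \emptyset$ and $X$ is connected, we can pick any connected component $\Omega^c$ of $X\setminus\supp f$. By construction, $\supp g_1\cap\Omega^c = \supp g_2\cap\Omega^c = \emptyset$, so Proposition \ref{uniqueness} applies to $S = g_1$, $U = g_2$ on this common component $\Omega^c$ and yields $g_1 = g_2$. Hence there is at most one solution with prescribed support equal to $\supp f$.

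Second, I would pin down the regularity. Given that such a solution $g$ exists, we have $f = \opa g$, so $f$ is $\opa$-exact in the sense of currents. Proposition \ref{reg} then says that whenever $f$ has coefficients in $\cc^k(X)$ (resp.\ $L^p_{loc}(X)$), any distributional solution of $\opa g = f$ must lie in $\cc^k(X)$ (resp.\ $L^p_{loc}(X)$). Applied to our unique $g$, this gives the claimed regularity.

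There is no serious obstacle here: the only mildly delicate point is the observation that the assumption $\supp g = \supp f$ already forces $g$ to vanish on every connected component of $X\setminus\supp f$, which is exactly the disjointness condition needed to invoke Proposition \ref{uniqueness}. Once this is noted, the corollary falls out by concatenating the two statements.
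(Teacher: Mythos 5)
Your proposal is correct and is exactly the paper's argument: the paper introduces this corollary with the single line ``Associating Proposition \ref{uniqueness} and Proposition \ref{reg}, we get,'' i.e.\ uniqueness comes from the ``in particular'' clause of Proposition \ref{uniqueness} and regularity from Proposition \ref{reg} once one notes that the existence of a solution makes $f$ $\opa$-exact. Your added remark that $\supp g=\supp f$ forces the disjointness hypothesis on every component of $X\setminus\supp f$ is the right justification and requires no further elaboration.
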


\section{Solving $\dbar$  with prescribed support}

Let $X$ be a connected, complex manifold and $\Omega$ a domain such that $\ol\Omega$ is strictly contained in $X$ and the interior of $\ol\Omega$ coincides with $\Omega$. We set $\Omega^c=X\setminus \ol\Omega$, it is a non-empty open subset of $X$.

Let us denote by $H^{0,1}_{\ol\Omega,\infty}(X)$ (resp.
$H^{0,1}_{\ol\Omega,cur}(X)$, $H^{0,1}_{\ol\Omega,\cc^k}(X)$, $H^{0,1}_{\ol\Omega,L^p_{loc}}(X)$) the Dolbeault
cohomology group of bidegree $(0,1)$ for smooth forms (resp.
currents, $\cc^k$-forms, $k\geq 0$, $L^p_{loc}$-forms, $1\leq p\leq +\infty$) with support in $\ol\Omega$.
The vanishing of these groups means that one can solve the $\opa$
equation with prescribed support in $\ol\Omega$ in the smooth category
(resp. the space of currents, the space of $\cc^k$-forms, the space of $L^p_{loc}$-forms).

It follows from Proposition \ref{uniqueness}, Proposition \ref{reg} and from the Dolbeault isomorphism with support conditions (Corollary 2.15 in \cite{HeLe2} and Proposition 1.2 in \cite{LaLp}) that
\begin{prop}\label{injective}
The natural morphisms from $H^{0,1}_{\ol\Omega,\infty}(X)$ (resp. $H^{0,1}_{\ol\Omega,\cc^k}(X)$, $k\geq 0$,
 $H^{0,1}_{\ol\Omega,L^p_{loc}}(X)$, $1\leq p\leq +\infty$) into $H^{0,1}_{\ol\Omega,cur}(X)$ are injective. In particular, if  $H^{0,1}_{\ol\Omega,cur}(X)=0$, then $H^{0,1}_{\ol\Omega,\infty}(X)=0$, $H^{0,1}_{\ol\Omega,\cc^k}(X)=0$ and
 $H^{0,1}_{\ol\Omega,L^p_{loc}}(X)=0$.
 \end{prop}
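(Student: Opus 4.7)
The plan is to deduce the injectivity directly from the regularity result of Proposition \ref{reg}. Consider a class in $H^{0,1}_{\ol\Omega,\infty}(X)$ (the arguments for $\cc^k$ and $L^p_{loc}$ will be identical) that maps to zero in $H^{0,1}_{\ol\Omega,cur}(X)$. A representative $f$ is then a smooth, $\opa$-closed $(0,1)$-form with $\supp f\subset\ol\Omega$, and by the vanishing hypothesis there exists some current $S$ on $X$ with $\supp S\subset\ol\Omega$ such that $\opa S=f$.

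At this point Proposition \ref{reg} applies directly: since $f$ is smooth (resp. $\cc^k$, $L^p_{loc}$) and is $\opa$-exact in the sense of currents, \emph{every} distributional solution of $\opa g=f$ must inherit the same regularity as $f$. Applying this to the particular current $S$ furnished by the vanishing assumption, we conclude that $S$ is itself smooth (resp. $\cc^k$, $L^p_{loc}$) on $X$. Since the constraint $\supp S\subset\ol\Omega$ was built in from the start, $S$ is already a solution of the required type, so the class of $f$ in $H^{0,1}_{\ol\Omega,\infty}(X)$ (resp. $H^{0,1}_{\ol\Omega,\cc^k}(X)$, $H^{0,1}_{\ol\Omega,L^p_{loc}}(X)$) vanishes. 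The in-particular clause on the simultaneous vanishing of all four groups is then immediate.

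There is no substantial obstacle here: once Proposition \ref{reg} is in hand, the injectivity is a one-line consequence because the regularity of a $\opa$-solution is not controlled by its support but only by the regularity of the right-hand side. The deeper content sits one level below, inside the proof of Proposition \ref{reg} itself, where Proposition \ref{uniqueness} is used to transfer regularity between solutions: any two distributional solutions of $\opa g=f$ differ by a global holomorphic function on $X$, so smoothness (or $\cc^k$, or $L^p_{loc}$ regularity) of any one solution constructed via the Dolbeault isomorphism forces the same regularity on all others, and in particular on our $S$.
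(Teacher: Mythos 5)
Your argument is correct and is precisely the one the paper intends: the paper derives Proposition \ref{injective} by citing Proposition \ref{reg} (whose proof rests on the injectivity of the Dolbeault isomorphism and the fact that two solutions differ by a holomorphic function, i.e.\ Proposition \ref{uniqueness}), exactly as you do when you upgrade the current solution $S$ supported in $\ol\Omega$ to a solution of the required regularity without disturbing its support. Nothing is missing.
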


In the next sections, examples are given proving that there exist domains in $\cb^2$ and $\cb P^2$ such that $H^{0,1}_{\ol\Omega,\infty}(X)=0$, but $H^{0,1}_{\ol\Omega,cur}(X)\neq 0$.

 We will now consider the link between the vanishing of the group $H^{0,1}_{\ol\Omega,cur}(X)$ and the extension properties of some holomorphic functions in $\Omega^c$.

\begin{prop}\label{CNcur}
Assume $H^{0,1}_{\ol\Omega,cur}(X)=0$, then any holomorphic function on $\Omega^c=X\setminus \ol\Omega$, which is the restriction to $\Omega^c$ of a distribution on $X$, extends as a holomorphic function to $X$.
\end{prop}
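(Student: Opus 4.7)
The plan is as follows. Let $h$ be a holomorphic function on $\Omega^c = X\setminus\ol\Omega$ and suppose there exists a distribution $\wt h$ on $X$ whose restriction to $\Omega^c$ equals $h$. I would take $\wt h$ as the starting point and consider the $(0,1)$-current
\[
T := \opa \wt h
\]
on $X$. By construction, $T$ is $\opa$-exact in the sense of currents (so, in particular, $\opa$-closed). Since $\wt h|_{\Omega^c} = h$ is holomorphic on the open set $\Omega^c$, we have $\opa\wt h = 0$ on $\Omega^c$, which gives $\supp T \subset \ol\Omega$.

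Now I would invoke the hypothesis $H^{0,1}_{\ol\Omega,cur}(X) = 0$. This yields a distribution $S$ on $X$ with $\supp S \subset \ol\Omega$ such that
\[
\opa S = T = \opa \wt h.
\]
Set $F := \wt h - S$. Then $\opa F = 0$ in the sense of currents on $X$, so by the hypoellipticity of $\opa$ (equivalently, the regularity statement underlying Proposition \ref{reg}), $F$ is a holomorphic function on $X$. Finally, because $\supp S \subset \ol\Omega$, we have $S = 0$ on $\Omega^c$, and therefore
\[
F|_{\Omega^c} = \wt h|_{\Omega^c} = h.
\]
Hence $F$ is the desired holomorphic extension of $h$ to all of $X$.

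There is no real obstacle here: the proof is a short application of the definition of the cohomology group with prescribed support combined with analytic hypoellipticity of $\opa$. The only point worth care is verifying that $\supp T \subset \ol\Omega$, which follows immediately from the fact that the given distributional extension $\wt h$ is actually a \emph{holomorphic function} on the open set $\Omega^c$ (not merely a distribution there), so that $\opa \wt h$ vanishes as a current on $\Omega^c$.
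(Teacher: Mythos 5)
Your proof is correct and is essentially identical to the paper's argument: form $T=\opa\wt h$, observe it is $\opa$-closed with support in $\ol\Omega$, use the vanishing of $H^{0,1}_{\ol\Omega,cur}(X)$ to find $S$ supported in $\ol\Omega$ with $\opa S=T$, and take $F=\wt h-S$ as the holomorphic extension. Nothing to add.
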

\begin{proof}
Let $f\in \oc(\Omega^c)$ and $S_f\in\dc'(X)$ a  distribution such that ${S_f}_{|_{\Omega^c}}=f$. Consider the $(0,1)$-current $\opa S_f$, it is closed and has support in $\ol\Omega$. Since $H^{0,1}_{\ol\Omega,cur}(X)=0$, there exists $U\in\dc'(X)$, with support in $\ol\Omega$ such that $\opa U=\opa S_f$ in $X$. Set $h=S_f-U$, it is a holomorphic fonction on $X$ and $h_{|_{\Omega^c}}=S_{f|_{\Omega^c}}=f$.
\end{proof}

In the same way, we can prove
\begin{prop}\label{CNLp}
Assume $H^{0,1}_{\ol\Omega,L^p_{loc}}(X)=0$, $p\geq 1$, then any holomorphic function on $\Omega^c=X\setminus \ol\Omega$, which is the restriction to $\Omega^c$ of a form with coefficients in $W^{1,p}_{loc}(X)$, extends as a holomorphic function to $X$.
\end{prop}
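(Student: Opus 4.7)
The plan is to follow verbatim the strategy used for Proposition \ref{CNcur}, simply substituting the $L^p_{loc}$ version of the vanishing hypothesis at each step and using Sobolev regularity in place of mere distributional regularity to extend $f$ to $X$ in the first place.

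First, given $f\in\oc(\Omega^c)$, by hypothesis there exists $S_f\in W^{1,p}_{loc}(X)$ such that $S_f|_{\Omega^c}=f$. Consider then the $(0,1)$-current $\opa S_f$. Because $S_f$ belongs to $W^{1,p}_{loc}(X)$, all of its first-order distributional derivatives are represented by $L^p_{loc}$ functions, so $\opa S_f$ is in fact a $(0,1)$-form with $L^p_{loc}$ coefficients. It is automatically $\opa$-closed (as $\opa^2=0$), and since $S_f$ is holomorphic on $\Omega^c$, the support of $\opa S_f$ is contained in $\ol\Omega$. Applying the hypothesis $H^{0,1}_{\ol\Omega,L^p_{loc}}(X)=0$, there exists $U\in L^p_{loc}(X)$ with $\supp U\subset\ol\Omega$ such that $\opa U=\opa S_f$ in the sense of currents on $X$.

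Set $h=S_f-U$. Then $h\in L^p_{loc}(X)$ and $\opa h=0$ on $X$ in the distributional sense, so by the hypoellipticity of $\opa$, $h$ coincides almost everywhere with a genuine holomorphic function on $X$. Moreover $U$ vanishes on $\Omega^c$ because $\supp U\subset\ol\Omega$, and therefore $h|_{\Omega^c}=S_f|_{\Omega^c}=f$, which gives the desired holomorphic extension of $f$ to $X$.

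The single point that differs from the proof of Proposition \ref{CNcur}, and essentially the only place where one needs to pay attention, is the bookkeeping of regularity: the assumption $S_f\in W^{1,p}_{loc}(X)$ is precisely what is needed to guarantee that $\opa S_f$ belongs to the class of $(0,1)$-forms (coefficients in $L^p_{loc}$) for which the vanishing of $H^{0,1}_{\ol\Omega,L^p_{loc}}(X)$ produces a primitive. I do not foresee any genuine obstacle; the passage from a distributional holomorphic function to a true holomorphic function via $\opa$-hypoellipticity is standard, and the connectivity/support considerations are identical to those in Proposition \ref{CNcur}.
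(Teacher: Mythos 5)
Your proof is correct and is exactly the argument the paper intends: it states Proposition \ref{CNLp} with only the remark ``in the same way, we can prove,'' referring to the proof of Proposition \ref{CNcur}, and your adaptation (noting that $S_f\in W^{1,p}_{loc}(X)$ makes $\opa S_f$ an $L^p_{loc}$ form, so the $L^p_{loc}$ vanishing hypothesis applies, and invoking hypoellipticity of $\opa$ for $h=S_f-U$) is precisely the required bookkeeping. No issues.
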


\begin{prop}\label{CNCk}
Assume $H^{0,1}_{\ol\Omega,\cc^k}(X)=0$, $k\geq 0$, then any holomorphic function on $\Omega^c=X\setminus \ol\Omega$, which is of class $\cc^{k+1}$ on $X\setminus\Omega=\ol{\Omega^c}$, extends as a holomorphic function to $X$.
\end{prop}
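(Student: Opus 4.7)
The plan is to mimic the argument used for Propositions \ref{CNcur} and \ref{CNLp}, replacing the ambient regularity class in the appropriate way. Let $f\in\oc(\Omega^c)$ be $\cc^{k+1}$ on $X\setminus\Omega=\ol{\Omega^c}$. The first step is to produce a global $\cc^{k+1}$ extension $F$ of $f$ to all of $X$. Since $\ol{\Omega^c}$ is closed in $X$ and $f$ is $\cc^{k+1}$ up to the boundary of $\Omega^c$, a Whitney (or Seeley-type) extension theorem, applied in local coordinate patches and glued with a partition of unity, gives $F\in\cc^{k+1}(X)$ with $F_{|_{\ol{\Omega^c}}}=f$.

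Next, consider the $(0,1)$-form $\opa F$ on $X$. Because $F$ is $\cc^{k+1}$, its coefficients are of class $\cc^k$; and because $F$ coincides with the holomorphic function $f$ on the open set $\Omega^c$, we have $\opa F=0$ on $\Omega^c$, so $\supp\opa F\subset X\setminus\Omega^c=\ol\Omega$. Moreover $\opa F$ is obviously $\opa$-closed, hence it represents a class in $H^{0,1}_{\ol\Omega,\cc^k}(X)$.

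By the hypothesis $H^{0,1}_{\ol\Omega,\cc^k}(X)=0$, there exists $g\in\cc^k(X)$ with $\supp g\subset\ol\Omega$ and $\opa g=\opa F$. Set $h=F-g$. Then $h\in\cc^k(X)$ and $\opa h=0$, so $h$ is holomorphic on $X$; and on $\Omega^c$ we have $g\equiv 0$, whence $h_{|_{\Omega^c}}=F_{|_{\Omega^c}}=f$. This is the desired holomorphic extension.

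The only non-routine step is the very first one: constructing a $\cc^{k+1}$ extension of $f$ to the whole manifold. The interpretation of "of class $\cc^{k+1}$ on $\ol{\Omega^c}$" must be strong enough (in the Whitney sense) that such an extension exists; under the standard reading this follows from Whitney's extension theorem and no hypothesis on the regularity of $\partial\Omega$ is needed. Once $F$ is in hand, the rest is a direct transcription of the proof of Proposition \ref{CNcur}, with the regularity class $\cc^{k+1}\to\cc^k$ tracked through the application of $\opa$.
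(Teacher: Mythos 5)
Your argument is exactly the one the paper intends: Proposition \ref{CNCk} is stated with ``in the same way'' referring to the proof of Proposition \ref{CNcur}, i.e.\ extend $f$ to $F$ on $X$, note that $\opa F$ is a $\opa$-closed $\cc^k$ form supported in $\ol\Omega$, solve $\opa g=\opa F$ with $g\in\cc^k$ supported in $\ol\Omega$, and take $h=F-g$. Your explicit use of the Whitney extension theorem to produce the global $\cc^{k+1}$ extension (with the hypothesis ``of class $\cc^{k+1}$ on $\ol{\Omega^c}$'' read in the Whitney sense) is consistent with how the paper itself handles this point, e.g.\ in the proof of the $L^2_{loc}$ non-vanishing in Section 3, so the proposal is correct and essentially identical to the paper's proof.
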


\begin{prop}\label{CNsmooth}
Assume $H^{0,1}_{\ol\Omega,\infty}(X)=0$, then any holomorphic function on $\Omega^c=X\setminus \ol\Omega$, which is smooth on $X\setminus\Omega=\ol{\Omega^c}$, extends as a holomorphic function to $X$.
\end{prop}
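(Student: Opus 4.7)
The plan is to mirror the argument of Proposition \ref{CNcur}, replacing the distribution extension step by a smooth extension step. Given $f\in\oc(\Omega^c)$ which is smooth on the closed set $\ol{\Omega^c}=X\setminus\Omega$, the goal is to produce $h\in\oc(X)$ with $h_{|_{\Omega^c}}=f$.

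First I would invoke Whitney's extension theorem (applied in local charts of $X$) to obtain a function $F\in\ci(X)$ whose restriction to $\ol{\Omega^c}$ coincides with $f$. Then I would examine $\opa F$: since $F=f$ on the open set $\Omega^c$ and $f$ is holomorphic there, $\opa F$ vanishes identically on $\Omega^c$, so $\opa F$ is a smooth $(0,1)$-form on $X$ with $\supp\opa F\subset\ol\Omega$. Moreover $\opa(\opa F)=0$, so $\opa F$ represents a class in $H^{0,1}_{\ol\Omega,\infty}(X)$.

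Next, using the vanishing hypothesis $H^{0,1}_{\ol\Omega,\infty}(X)=0$, I would find $u\in\ci(X)$ with $\supp u\subset\ol\Omega$ such that $\opa u=\opa F$ on $X$. Setting $h=F-u$, one has $\opa h=0$ on $X$, hence $h\in\oc(X)$; and since $u\equiv 0$ on $\Omega^c$, one gets $h_{|_{\Omega^c}}=F_{|_{\Omega^c}}=f$, yielding the desired holomorphic extension.

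The only nontrivial point is the very first step: the smooth extension of $f$ from the closed set $\ol{\Omega^c}$ to all of $X$. Since no regularity is assumed on the boundary of $\Omega$, this is where one needs Whitney's extension theorem for functions smooth on an arbitrary closed subset (in the sense of Whitney jets, which here follows automatically from $f\in\ci(\ol{\Omega^c})$ interpreted via its derivatives on the open set $\Omega^c$ extending continuously to $\ol{\Omega^c}$). Once this extension is in hand, everything else is a direct translation of the proof of Proposition \ref{CNcur} into the smooth category.
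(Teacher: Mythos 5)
Your proof is correct and is essentially the paper's own argument: the paper proves Proposition \ref{CNcur} by exactly this extend--then--correct scheme and simply states that Propositions \ref{CNLp}--\ref{CNsmooth} follow ``in the same way'', with a smooth (Whitney-type) extension of $f$ from the closed set $\ol{\Omega^c}$ to $X$ replacing the distributional extension that is part of the hypothesis in Proposition \ref{CNcur}. Your identification of that extension as the only nontrivial step also matches the paper's reading of ``smooth on $\ol{\Omega^c}$'', since in Section 3 the authors themselves invoke the Whitney extension theorem for precisely this step in the finite-smoothness case.
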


\begin{cor}\label{CSconnected}
Assume $H^{0,1}_{\ol\Omega,\infty}(X)=0$, then $\Omega^c=X\setminus \ol\Omega$ is connected.
\end{cor}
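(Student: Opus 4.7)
The plan is to proceed by contradiction using Proposition~\ref{CNsmooth}. Suppose $\Omega^c = X\setminus\ol\Omega$ were disconnected. Since $\Omega^c$ is open in the manifold $X$ and connected components of an open subset of a manifold are open, one can write $\Omega^c=U\sqcup V$ with $U,V$ non-empty disjoint open subsets of $X$. The strategy is to manufacture a holomorphic function on $\Omega^c$ that is smooth on $\ol{\Omega^c}=X\setminus\Omega$ and fails to extend holomorphically to $X$, thereby contradicting the hypothesis via Proposition~\ref{CNsmooth}.

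The natural candidate is the locally constant function $f$ defined by $f\equiv 1$ on $U$ and $f\equiv 0$ on $V$; this is automatically in $\oc(\Omega^c)$. To realise $f$ as the restriction to $\Omega^c$ of a smooth function on $X$, I would use a partition of unity to build $\chi\in\ci(X)$ with $\chi\equiv 1$ on a neighbourhood of $\ol U$ and $\chi\equiv 0$ on a neighbourhood of $\ol V$, and set $f=\chi|_{\Omega^c}$. Then $f$ is smooth on $\ol{\Omega^c}$, and Proposition~\ref{CNsmooth} yields $h\in\oc(X)$ with $h|_{\Omega^c}=f$. Since $h$ is holomorphic on the connected manifold $X$ and takes the value $1$ on the non-empty open set $U$ and $0$ on the non-empty open set $V$, the identity principle delivers the contradiction.

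The step requiring the most care is the construction of $\chi$, which needs $\ol U\cap\ol V=\emptyset$ in $X$. Because $U$ is closed in $\Omega^c$ one has $\ol U\setminus U\subset\pa\Omega$, and similarly for $V$, so the two closures could a priori meet at a ``pinch point'' on $\pa\Omega$. Under the standing assumption that $\Omega$ coincides with the interior of its closure, such pathologies are avoided for the domains of interest, and $\chi$ is available from a standard partition-of-unity argument. This geometric verification is the only non-routine step; everything else is formal from Proposition~\ref{CNsmooth} and analytic continuation.
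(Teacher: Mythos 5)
Your argument is the same as the paper's: take the locally constant function equal to $1$ on one component of $\Omega^c$ and $0$ on the others, feed it to Proposition~\ref{CNsmooth}, and conclude by the identity principle on the connected manifold $X$. You have also correctly isolated the one delicate step: to invoke Proposition~\ref{CNsmooth} the function must be smooth on $\ol{\Omega^c}$, which for a locally constant function forces the closures of the components to be pairwise disjoint.

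Your resolution of that step, however, is not correct: the standing hypothesis that $\Omega$ coincides with the interior of its closure does \emph{not} prevent the closures of two components of $X\setminus\ol\Omega$ from meeting. For instance, in $\cb\simeq\rb^2$ take
$$\Omega=\{(x,y)~|~|y|<x^2\}\cup\{(x,y)~|~x^2+y^2>1\}.$$
This $\Omega$ is open, connected and equal to the interior of its closure, yet $X\setminus\ol\Omega$ is the disjoint union of $U=\{y>x^2\}\cap D$ and $V=\{y<-x^2\}\cap D$ (with $D$ the open unit disc), and $\ol U\cap\ol V$ contains the origin; taking the product with $\cb$ gives the same phenomenon in $\cb^2$. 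For such a domain the locally constant function is not even continuous on $\ol{\Omega^c}$, so neither your cutoff $\chi$ nor Proposition~\ref{CNsmooth} is available, and the argument does not close. To be fair, the paper's own proof applies Proposition~\ref{CNsmooth} to the same locally constant function without checking smoothness on $\ol{\Omega^c}$, so it carries exactly the same unstated assumption; your write-up makes the assumption visible but then discharges it with a false topological claim. As written, your proof (and the paper's) is complete only under the additional hypothesis that distinct connected components of $\Omega^c$ have disjoint closures.
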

\begin{proof}
Assume $\Omega^c$ is not connected. Let $f$ be a holomorphic function which is constant equal to $1$ in one connected component of $\Omega^c$ and vanishes identically on all the other ones. By analytic continuation $f$ cannot be the restriction to $\Omega^c$ of a holomorphic function on $X$, and by Proposition \ref{CNsmooth} we get $H^{0,1}_{\ol\Omega,\infty}(X)\neq 0$.
\end{proof}

\begin{rem}
Note that, by Proposition \ref{support}, $H^{0,1}_{\ol\Omega,cur}(X)\neq 0$ if and only if there exists at least one $\opa$-exact $(0,1)$-current $T$ with support contained in $\ol\Omega$ such that the support of each solution of the equation $\opa S=T$ contains at least a connected component of $\Omega^c$.
\end{rem}

Let us give a partial converse to Corollary \ref{CSconnected}. Let $H^{0,1}_c(X) $ denote the Dolbeault cohomology group for $(0,1)$-forms  with
compact support in $X$.

\begin{prop}\label{CNconnected}
Assume $\Omega$ is relatively compact in a non-compact complex manifold $X$ such that $H^{0,1}_c(X)=0$. If $\Omega^c=X\setminus \ol\Omega$ is connected, then
$$H^{0,1}_{\ol\Omega,cur}(X)=H^{0,1}_{\ol\Omega,\infty}(X)=H^{0,1}_{\ol\Omega,\cc^k}(X)=H^{0,1}_{\ol\Omega,L^p_{loc}}(X)=0.$$
\end{prop}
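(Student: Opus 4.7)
The plan is to reduce the vanishing of all four cohomology groups to the vanishing of $H^{0,1}_{\ol\Omega,cur}(X)$ via Proposition \ref{injective}, and then to prove the latter by combining the hypothesis $H^{0,1}_c(X)=0$ with the analytic continuation argument from Proposition \ref{support} applied on the connected set $\Omega^c$.

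First, let $T$ be a $\opa$-closed $(0,1)$-current with $\supp T\subset\ol\Omega$. Since $\Omega$ is relatively compact, $\ol\Omega$ is compact in $X$, so $T$ is a compactly supported current. Using the Dolbeault isomorphism for cohomology with support in the family $c$ of compact subsets (recalled in the introduction), the hypothesis $H^{0,1}_c(X)=0$ produces a distribution (in fact a current) $S$ on $X$ with compact support such that $\opa S=T$.

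Next, I analyse the support of $S$. Set $K=\ol\Omega\cup\supp S$; this is a compact subset of $X$. Because $X$ is non-compact, $X\setminus K$ is non-empty; every point of $X\setminus K$ lies in $\Omega^c$ and outside $\supp S$, so $S$ vanishes on the non-empty open set $\Omega^c\setminus K$. Since $\opa S=T$ and $\supp T\subset\ol\Omega$, the current $S$ is in fact a holomorphic function on the connected open set $\Omega^c=X\setminus\ol\Omega$. By analytic continuation (exactly the argument of Proposition \ref{support}), $S\equiv 0$ on $\Omega^c$, and hence $\supp S\subset\ol\Omega$. This shows that the class of $T$ in $H^{0,1}_{\ol\Omega,cur}(X)$ is zero, so $H^{0,1}_{\ol\Omega,cur}(X)=0$. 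Applying Proposition \ref{injective} then yields the vanishing of $H^{0,1}_{\ol\Omega,\infty}(X)$, $H^{0,1}_{\ol\Omega,\cc^k}(X)$ and $H^{0,1}_{\ol\Omega,L^p_{loc}}(X)$.

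There is no serious obstacle in this plan; the only point that needs to be handled carefully is the existence of a compactly supported current solution $S$, which relies on interpreting $H^{0,1}_c(X)=0$ via the Dolbeault isomorphism with support conditions so that the vanishing applies in the current category, and then the combination of \emph{non-compactness of $X$}, \emph{relative compactness of $\Omega$}, and \emph{connectedness of $\Omega^c$} to force $S$ to vanish off $\ol\Omega$ by analytic continuation.
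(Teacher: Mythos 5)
Your proof is correct and follows essentially the same route as the paper: reduce everything to the current case (the paper cites Proposition \ref{reg}, you cite Proposition \ref{injective}, which amounts to the same reduction), solve $\opa S=T$ with compact support using $H^{0,1}_c(X)=0$, and then use compactness of $\supp S$, non-compactness of $X$ and connectedness of $\Omega^c$ to force $\supp S\subset\ol\Omega$. Your inline analytic-continuation argument on $\Omega^c\setminus K$ is just the proof of Proposition \ref{support}, which the paper invokes directly via its dichotomy (either $\supp S\cap\Omega^c=\emptyset$ or $\Omega^c\subset\supp S$, the latter being impossible since $X$ would then be compact).
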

\begin{proof}
By Proposition \ref{reg}, it suffices  to prove that $H^{0,1}_{\ol\Omega,cur}(X)=0$. This vanishing result follows directly from Proposition \ref{support}. More precisely, if $T$ is a $\opa$-closed current on $X$ with support contained in $\ol\Omega$, there exists a distribution $S$, with compact support  such that $\opa S=T$, since $H^{0,1}_c(X)=0$. Then the support of $S$ cannot contain the connected set $\Omega^c$, otherwise $X=\ol\Omega\cup {\rm supp}~S$ would be compact, and hence ${\rm supp}~S$ is contained in $\ol\Omega$.
\end{proof}

In particular, if $X$ is a Stein manifold with ${\rm dim}_\cb~X\geq 2$ and $\Omega$ a relatively compact domain in $X$, then

\centerline{$H^{0,1}_{\ol\Omega,cur}(X)=H^{0,1}_{\ol\Omega,\infty}(X)=H^{0,1}_{\ol\Omega,\cc^k}(X)=H^{0,1}_{\ol\Omega,L^p_{loc}}(X)=0$~$\Leftrightarrow$~$\Omega^c$ is connected.}
\medskip
An immediate corollary of Proposition \ref{CNconnected} and Proposition \ref{CNcur} is the following:

\begin{cor}\label{extension-holo}
Let $X$ be a non-compact, connected complex manifold such that  $H^{0,1}_c(X)=0$, and $\Omega$ a relatively compact, open subset of $X$ with connected complement, then any holomorphic function on $\Omega^c$ extends as a holomorphic function to $X$.
\end{cor}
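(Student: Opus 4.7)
The plan is to combine the two cited propositions with a standard cutoff argument. Proposition~\ref{CNconnected} yields $H^{0,1}_{\ol\Omega,cur}(X)=0$, and Proposition~\ref{CNcur} then produces a holomorphic extension of any $f\in\oc(\Omega^c)$ that is already known to be the restriction to $\Omega^c$ of a global distribution on $X$. The only residual point is that an arbitrary $f\in\oc(\Omega^c)$ is a priori not the restriction of a distribution on $X$; this gap is closed by cutting off against $\ol\Omega$, which converts the extension problem into solving $\opa$ with compact support and invokes $H^{0,1}_c(X)=0$ directly.

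Concretely, I would pick $\chi\in\ci_c(X)$ with $\chi\equiv 1$ on an open neighborhood $V$ of $\ol\Omega$ (possible because $\Omega$ is relatively compact) and define $g=(1-\chi)f$ on $\Omega^c$, extended by $0$ on $\ol\Omega$. Since $1-\chi$ vanishes on $V\supset\ol\Omega$, $g$ is smooth on $X$, and a direct computation gives $\opa g=-\opa\chi\wedge f$, a smooth $\opa$-closed $(0,1)$-form compactly supported in $\Omega^c$. Because $H^{0,1}_c(X)=0$, there exists $u\in\ci_c(X)$ with $\opa u=\opa g$, so that $F:=g-u$ is holomorphic on $X$.

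It remains to identify $F$ with $f$ on $\Omega^c$. Since $X$ is non-compact and $K:=\supp\chi\cup\supp u$ is compact, the open set $X\setminus K$ is non-empty; moreover $V\subset\supp\chi\subset K$, so $X\setminus K\subset X\setminus V\subset\Omega^c$. On this non-empty open subset of $\Omega^c$ one has $\chi=0$ and $u=0$, hence $F=g=f$. Because $\Omega^c$ is connected and both $F$ and $f$ are holomorphic on it, analytic continuation yields $F=f$ on all of $\Omega^c$, so $F$ is the desired global holomorphic extension.

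The main (essentially only) technical point is this last analytic continuation step, which requires simultaneously that the region of agreement $X\setminus K$ be non-empty (using non-compactness of $X$) and contained in $\Omega^c$ (using $\ol\Omega\subset\supp\chi$). Once this geometric check is arranged, the connectedness hypothesis on $\Omega^c$ does the rest, and one recognizes the classical Hartogs-type extension phenomenon abstractly packaged in the two cited propositions.
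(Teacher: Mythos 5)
Your argument is correct: the cutoff $g=(1-\chi)f$, the compactly supported solution of $\opa u=\opa g$, the identification $F=f$ on $X\setminus K$, and the final analytic continuation on the connected set $\Omega^c$ all check out, and the geometric point you flag ($X\setminus K\neq\emptyset$ and $X\setminus K\subset\Omega^c$) is exactly the one that needs checking. The route differs from the paper's in its packaging, though not in its underlying mechanism. The paper does not redo the Ehrenpreis-type cutoff; it applies Propositions \ref{CNconnected} and \ref{CNcur} to an auxiliary relatively compact neighborhood $D$ of $\ol\Omega$ with connected complement: since $\ol{X\setminus D}\subset\Omega^c$, the function $f$ is smooth on a neighborhood of $\ol{D^c}$, hence is the restriction of a global distribution, so Proposition \ref{CNcur} (whose hypothesis is supplied by Proposition \ref{CNconnected}) extends $f|_{D^c}$ holomorphically to $X$, and analytic continuation on $\Omega^c$ finishes. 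What the paper's route buys is brevity and reuse of the prescribed-support machinery already established; what it leaves implicit is the existence of such a $D$ with connected complement. Your direct argument buys self-containedness and avoids that auxiliary choice entirely, at the cost of not actually using the two propositions you cite in your opening paragraph --- your concrete proof invokes only $H^{0,1}_c(X)=0$ and is, in effect, an unwinding of what those propositions encode. Either version is a complete proof.
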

\begin{proof}
It is sufficient to apply Proposition \ref{CNconnected} and Proposition \ref{CNcur} to a neighborhood $D$ of $\ol\Omega$ with connected complement and to conclude by analytic continuation.
\end{proof}

Corollary \ref{extension-holo} is   the classical Hartogs extension phenomenon.
Note that all the previous results remain true if we replace the family of all compact subsets of a non-compact manifold by any family $\Phi$ of supports in a manifold $X$, different from the family of all closed subsets of $X$ (see e.g. \cite{Se} for the definition of a family of supports).

\begin{prop}\label{CSsmooth}
Assume the complex manifold $X$ satisfies $H^{0,1}(X)=0$. If any holomorphic function on $\Omega^c$, which is smooth on $X\setminus\Omega=\ol{\Omega^c}$, extends as a holomorphic function to $X$, then $H^{0,1}_{\ol\Omega,\infty}(X)=0$.
\end{prop}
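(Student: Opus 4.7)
The plan is a direct diagram chase using the global solvability hypothesis $H^{0,1}(X)=0$ together with the given extension property.

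First, given a smooth $\dbar$-closed $(0,1)$-form $f$ on $X$ with $\supp f\subset\ol\Omega$, I would use the assumption $H^{0,1}(X)=0$ (interpreted in the smooth category, which is legitimate by the Dolbeault isomorphism with supports in \cite{HeLe2}, \cite{LaLp}) to produce a smooth function $G\in\ci(X)$ satisfying $\opa G=f$ on $X$.

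Second, since $f$ vanishes identically on $\Omega^c=X\setminus\ol\Omega$, the equation $\opa G=f$ shows that $G$ is holomorphic on the open set $\Omega^c$. Moreover, as $G$ is smooth on the whole of $X$, its restriction to the closed set $X\setminus\Omega=\ol{\Omega^c}$ is smooth. Thus $G\vert_{\Omega^c}$ falls precisely into the class of functions covered by the hypothesis of the proposition, so it admits an extension to a holomorphic function $H\in\oc(X)$ with $H\vert_{\Omega^c}=G\vert_{\Omega^c}$.

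Third, I would set $g=G-H$. Then $g\in\ci(X)$, $\opa g=\opa G-\opa H=f-0=f$, and by construction $g$ vanishes on $\Omega^c$, so $\supp g\subset X\setminus\Omega^c=\ol\Omega$. This produces the required smooth solution with prescribed support in $\ol\Omega$, establishing $H^{0,1}_{\ol\Omega,\infty}(X)=0$.

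There is no substantive obstacle here: the only point requiring any care is matching the hypothesis of the proposition to the object $G\vert_{\Omega^c}$, and this is automatic because global smoothness of $G$ on $X$ certainly implies smoothness on $\ol{\Omega^c}$. The argument is essentially the converse of the chain used in Propositions \ref{CNcur}--\ref{CNsmooth}, with global solvability of $\opa$ on $X$ playing the role that was previously played by Proposition \ref{reg} and the local-to-global passage.
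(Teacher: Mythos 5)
Your argument is correct and coincides with the paper's own proof: solve $\opa G=f$ globally using $H^{0,1}(X)=0$, note $G$ is holomorphic on $\Omega^c$ and smooth up to $\ol{\Omega^c}$, apply the extension hypothesis to get $H\in\oc(X)$, and take $g=G-H$ as the solution supported in $\ol\Omega$. No differences of substance.
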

\begin{proof}
Let $f$ be a smooth
$\opa$-closed form in $X$ with support contained in
$\ol\Omega$. Since $H^{0,1}(X)=0$, there exists a function
$g\in\ci(X)$ such that $\opa g=f$. Since the support of $f$ is contained in
$\ol\Omega$, $g$ is holomorphic in $\Omega^c$ and by the extension
property it extends as a holomorphic function $\wt g$ to $X$. Set
$h=g-\wt g$, then the support of $h$ is contained in
$\ol\Omega$ and $\opa h=f$.
\end{proof}

 Similarly,   since $H^{0,1}(X)=H^{0,1}_{\cc^k}(X)=H^{0,1}_{L^p_{loc}}(X)=H^{0,1}_{cur}(X)=0$ by the Dolbeault isomorphism,  we have
\begin{prop}\label{CSCk}
Assume the complex manifold $X$ satisfies $H^{0,1}(X)=0$. If any holomorphic function on $\Omega^c$, which is of class $\cc^{k}$, $k\geq 0$ on $X\setminus\Omega=\ol{\Omega^c}$, extends as a holomorphic function to $X$, then $H^{0,1}_{\ol\Omega,\cc^k}(X)=0$.
\end{prop}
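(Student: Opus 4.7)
The plan is to imitate the proof of Proposition~\ref{CSsmooth} word for word, only replacing the smooth category by the $\cc^k$ category and tracking regularity throughout. The extra input needed is precisely the statement, recalled in the paragraph preceding the proposition, that the Dolbeault isomorphism in the $\cc^k$ category transports the hypothesis $H^{0,1}(X)=0$ to the vanishing $H^{0,1}_{\cc^k}(X)=0$.

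First I would fix an arbitrary $\dbar$-closed $(0,1)$-form $f$ on $X$ with $\cc^k$ coefficients and $\supp f \subset \ol\Omega$, and set myself the task of producing $h \in \cc^k(X)$ with $\dbar h = f$ and $\supp h \subset \ol\Omega$. By the Dolbeault isomorphism in the $\cc^k$ category (Corollary~2.15 in \cite{HeLe2}) together with $H^{0,1}(X)=0$, there exists $g \in \cc^k(X)$ with $\dbar g = f$. Since $f$ vanishes on $\Omega^c = X\setminus\ol\Omega$, the function $g$ is holomorphic on $\Omega^c$; and because $g \in \cc^k(X)$, its restriction to the closed set $X\setminus\Omega = \ol{\Omega^c}$ is of class $\cc^k$. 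This is exactly the hypothesis of the extension property assumed in the statement, so I may invoke it to get a holomorphic function $\wt g$ on $X$ with $\wt g|_{\Omega^c} = g|_{\Omega^c}$.

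The candidate solution is then $h := g - \wt g$, which is $\cc^k$ on $X$, satisfies $\dbar h = \dbar g - \dbar \wt g = f$, and vanishes identically on $\Omega^c$, giving $\supp h \subset \ol\Omega$. Since $f$ was an arbitrary representative, this shows $H^{0,1}_{\ol\Omega,\cc^k}(X) = 0$.

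I do not anticipate a genuine obstacle: the whole argument is structurally identical to Proposition~\ref{CSsmooth}, and the single potentially delicate point, namely that the global solvability of $\dbar$ in $\cc^k$ really follows from $H^{0,1}(X)=0$, is handled by the Dolbeault isomorphism already cited in the paper. Proposition~\ref{reg} provides a clean conceptual backup (any two global solutions differ by a holomorphic function and hence share regularity), so one cannot accidentally lose $\cc^k$ regularity at any step.
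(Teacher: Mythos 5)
Your proof is correct and is essentially the paper's own argument: the paper obtains Proposition \ref{CSCk} by exactly this transcription of the proof of Proposition \ref{CSsmooth} into the $\cc^k$ category, using the Dolbeault isomorphism to pass from $H^{0,1}(X)=0$ to $H^{0,1}_{\cc^k}(X)=0$, solving $\dbar g=f$ with $g\in\cc^k(X)$, invoking the extension hypothesis for $g|_{\Omega^c}$, and taking $h=g-\wt g$.
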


\begin{prop}\label{CSLp}
Assume the complex manifold $X$ satisfies $H^{0,1}(X)=0$. If any holomorphic function on $\Omega^c=X\setminus \ol\Omega$, which is the restriction to $\Omega^c$ of a  function    $L^p_{loc}(X)$, $p\geq 1$, extends as a holomorphic function to $X$, then $H^{0,1}_{\ol\Omega,L^p_{loc}}(X)=0$.
\end{prop}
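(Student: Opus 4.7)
The plan is to mirror the arguments used for Propositions \ref{CSsmooth} and \ref{CSCk}, replacing the smooth (resp. $\cc^k$) regularity by $L^p_{loc}$ regularity and using the $L^p_{loc}$ version of the Dolbeault isomorphism that is invoked just before the statement.

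First, start with an arbitrary $\opa$-closed $(0,1)$-form $f$ on $X$ with coefficients in $L^p_{loc}(X)$ and support contained in $\ol\Omega$. Since the hypothesis gives $H^{0,1}(X)=0$, the Dolbeault isomorphism cited in the excerpt yields $H^{0,1}_{L^p_{loc}}(X)=0$, so there exists a function $g\in L^p_{loc}(X)$ with $\opa g=f$ on $X$ in the sense of currents. (Equivalently, one can first invoke the smooth vanishing to produce a distributional solution, then apply Proposition \ref{reg} to upgrade its regularity to $L^p_{loc}$.)

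Next, exploit the support of $f$: since $\supp f\subset\ol\Omega$, the relation $\opa g=f$ implies $\opa g=0$ on the open set $\Omega^c=X\setminus\ol\Omega$. Thus $g_{|_{\Omega^c}}$ is holomorphic on $\Omega^c$ and, by construction, is the restriction to $\Omega^c$ of the $L^p_{loc}(X)$ function $g$. Applying the extension hypothesis of the proposition, we obtain a function $\wt g\in\oc(X)$ such that $\wt g_{|_{\Omega^c}}=g_{|_{\Omega^c}}$.

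Finally, set $h=g-\wt g$. Since $\wt g$ is holomorphic on all of $X$ it is smooth, hence locally bounded, so $h\in L^p_{loc}(X)$. The agreement of $g$ and $\wt g$ on $\Omega^c$ forces $\supp h\subset\ol\Omega$, while $\opa\wt g=0$ gives $\opa h=\opa g=f$. This exhibits $f$ as the $\opa$ of an $L^p_{loc}$ function with support in $\ol\Omega$, yielding $H^{0,1}_{\ol\Omega,L^p_{loc}}(X)=0$.

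There is no serious obstacle here; the argument is a routine transcription of the smooth case. The only point requiring care is making sure the extension $\wt g$ lands in the same function space as $g$ so that the difference can be manipulated, which is immediate because holomorphic functions on a complex manifold are automatically $L^p_{loc}$.
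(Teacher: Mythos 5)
Your argument is correct and is exactly the paper's intended proof: the paper disposes of Proposition \ref{CSLp} with the remark that the proof of Proposition \ref{CSsmooth} carries over verbatim, using $H^{0,1}_{L^p_{loc}}(X)=0$ via the Dolbeault isomorphism, solving $\opa g=f$ with $g\in L^p_{loc}(X)$, applying the extension hypothesis to $g_{|_{\Omega^c}}$, and taking $h=g-\wt g$. Nothing further is needed.
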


\begin{prop}\label{CScur}
Assume the complex manifold $X$ satisfies $H^{0,1}(X)=0$. If any holomorphic function on $\Omega^c=X\setminus \ol\Omega$, which is the restriction to $\Omega^c$ of a distribution on $X$, extends as a holomorphic function to $X$, then $H^{0,1}_{\ol\Omega,cur}(X)=0$.
\end{prop}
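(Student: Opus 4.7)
The plan is to imitate the proof of Proposition \ref{CSsmooth}, replacing smooth forms and functions by currents and distributions. Let $T$ be a $\opa$-closed $(0,1)$-current on $X$ with support contained in $\ol\Omega$; the goal is to produce a distribution $h$ on $X$, supported in $\ol\Omega$, with $\opa h=T$.

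First I would invoke the Dolbeault isomorphism at the level of currents (Corollary 2.15 in \cite{HeLe2} and Proposition 1.2 in \cite{LaLp}): the assumption $H^{0,1}(X)=0$ implies that the current-level group $H^{0,1}_{cur}(X)$ also vanishes. Hence there exists a distribution $S\in\dc'(X)$ with $\opa S=T$. Because ${\rm supp}~T\subset\ol\Omega$, the restriction of $S$ to the open set $\Omega^c=X\setminus\ol\Omega$ satisfies $\opa S=0$ in the sense of distributions. By hypoellipticity of the Cauchy--Riemann operator on an open subset of $\cb^n$ (which one may also view as the trivial case of the Dolbeault isomorphism applied on $\Omega^c$), $S|_{\Omega^c}$ coincides with a holomorphic function $f\in\oc(\Omega^c)$. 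By construction $f$ is the restriction to $\Omega^c$ of the distribution $S$, so the hypothesis of the proposition applies: $f$ extends to a holomorphic function $\wt S$ on all of $X$.

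It remains to set $h=S-\wt S$. Then $h\in\dc'(X)$ and $\opa h=\opa S-\opa\wt S=T$, while $h|_{\Omega^c}=f-\wt S|_{\Omega^c}=0$. Therefore ${\rm supp}~h\subset\ol\Omega$, which produces the required solution and proves $H^{0,1}_{\ol\Omega,cur}(X)=0$.

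There is essentially no serious obstacle: the only point requiring care is the step from ``$\opa S=0$ in the distributional sense on $\Omega^c$'' to ``$S|_{\Omega^c}$ is a holomorphic function'', which is the standard hypoellipticity statement used implicitly throughout the earlier propositions (\ref{CNcur}, \ref{CNLp}, \ref{CNCk}, \ref{CNsmooth}). The structure of the argument is identical to that of Proposition \ref{CSsmooth}; what makes it work in the current category is exactly that $H^{0,1}_{cur}(X)=0$ follows from $H^{0,1}(X)=0$ via the Dolbeault isomorphism with support conditions, and that holomorphic extension is assumed in the broadest class (restrictions of distributions), so it subsumes all the function-space versions already treated.
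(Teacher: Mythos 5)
Your proof is correct and follows exactly the route the paper intends: the paper derives Proposition \ref{CScur} by the same argument as Proposition \ref{CSsmooth}, noting that $H^{0,1}(X)=0$ gives $H^{0,1}_{cur}(X)=0$ via the Dolbeault isomorphism, solving $\opa S=T$ by a distribution, extending the holomorphic restriction $S|_{\Omega^c}$, and subtracting. The hypoellipticity step you flag is indeed the standard fact the paper also uses implicitly (e.g.\ in Proposition \ref{CNcur}), so there is nothing to add.
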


Let us end this section by a characterization  of pseudoconvexity in $\cb^2$  by means of the Dolbeault cohomology with prescribed support.

\begin{thm}
Let $D$ be a bounded domain in $\cb^2$ with Lipschitz boundary. Then the following assertions are equivalent:

 (\text{i})  $D$ is a pseudoconvex domain;

(\text{ii}) $H^{0,1}_{\ol D,\infty}(\cb^2)=0$ and $H^{0,2}_{\ol D,\infty}(\cb^2)$ is Hausdorff.

\end{thm}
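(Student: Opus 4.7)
The plan is to apply the Serre duality recalled in the introduction, which identifies $H^{0,q}_{\ol D,\infty}(\cb^2)$ with the topological dual of the smooth-up-to-boundary Dolbeault cohomology $H^{2,2-q}_\infty(\ol D)$, and to combine it with the classical characterization of bounded Lipschitz pseudoconvex domains in $\cb^2$ by the vanishing of $H^{0,1}_\infty(\ol D)$.

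For (i)$\Rightarrow$(ii), suppose $D$ is pseudoconvex with Lipschitz boundary. By the H\"ormander--Kohn theorem in its Lipschitz version, one has $H^{p,q}_\infty(\ol D)=0$ for all $p$ and $1\leq q\leq 1$; in particular $H^{2,1}_\infty(\ol D)=0$ and $\opa$ has closed range on smooth forms up to the boundary in all relevant bidegrees. Serre duality then yields $H^{0,1}_{\ol D,\infty}(\cb^2)\cong (H^{2,1}_\infty(\ol D))'=0$, and it identifies $H^{0,2}_{\ol D,\infty}(\cb^2)$ with the topological dual of the Fr\'echet space $H^{2,0}_\infty(\ol D)=\oc_{2,0}(\ol D)$ of holomorphic $(2,0)$-forms smooth up to the boundary, which is Hausdorff.

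For (ii)$\Rightarrow$(i), the Hausdorffness of $H^{0,2}_{\ol D,\infty}(\cb^2)$ is equivalent, via the chain-level duality between the $\opa$-complex of smooth forms on $\cb^2$ with support in $\ol D$ and the $\opa$-complex of smooth forms up to the boundary on $\ol D$, to the closed range of $\opa:\ci_{2,0}(\ol D)\to\ci_{2,1}(\ol D)$, so that $H^{2,1}_\infty(\ol D)$ is Hausdorff. Both $H^{0,1}_{\ol D,\infty}(\cb^2)$ and $H^{2,1}_\infty(\ol D)$ being Hausdorff, the Serre duality pairing between them is perfect, and the hypothesis $H^{0,1}_{\ol D,\infty}(\cb^2)=0$ forces $H^{2,1}_\infty(\ol D)=0$ by the Hahn--Banach theorem. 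Wedging with the nowhere-vanishing holomorphic form $dz_1\wedge dz_2$ induces an isomorphism $H^{2,1}_\infty(\ol D)\cong H^{0,1}_\infty(\ol D)$, hence $H^{0,1}_\infty(\ol D)=0$, and the classical characterization then implies that $D$ is pseudoconvex.

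The main obstacle is the careful handling of Serre duality in the Lipschitz Fr\'echet--DFS setting, and in particular the correspondence between Hausdorffness of cohomology with prescribed support in $\cb^2$ and closed range of $\opa$ in the smooth-up-to-boundary category on $\ol D$. The Hausdorffness hypothesis on $H^{0,2}_{\ol D,\infty}(\cb^2)$ is precisely what makes the Serre duality pairing perfect; without it, only an inclusion of dual spaces would be available and the vanishing of $H^{2,1}_\infty(\ol D)$ could not be extracted from the vanishing of $H^{0,1}_{\ol D,\infty}(\cb^2)$. A secondary difficulty is invoking the converse characterization of pseudoconvexity by $H^{0,1}_\infty(\ol D)=0$ in the merely Lipschitz regularity of $\partial D$, which rests on a Levi-type extension argument across any would-be point of pseudoconcavity.
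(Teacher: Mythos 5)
There are genuine gaps in both directions, and they stem from the same misidentification of the Serre duality partner. The group $H^{0,q}_{\ol D,\infty}(\cb^2)$ of smooth forms with prescribed support in $\ol D$ pairs against the Dolbeault cohomology $\check{H}^{2,2-q}(D)$ of \emph{extendable currents} on $D$, not against the smooth-up-to-the-boundary cohomology $H^{2,2-q}_\infty(\ol D)$; the latter is the dual partner of the cohomology of \emph{currents} supported in $\ol D$. These two boundary cohomologies are genuinely different (for the bounded Hartogs triangle $\mathbb T$ one has $H^{0,1}(\mathbb T)=0$ while $H^{0,1}_\infty(\ol{\mathbb T})$ is infinite dimensional), so the substitution is not harmless. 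With the correct pairing, hypothesis (ii) yields $\check{H}^{2,1}(D)=0$, and the paper then runs a Laufer-type argument: assuming $D$ is not pseudoconvex, it solves $\opa v=B$ on $D$ for a Bochner--Martinelli type form $B$ singular at a point $0\in\wt D\setminus\ol D$, using only the vanishing of $\check{H}^{2,1}(D)$ and \emph{interior} regularity of $\opa$, and produces a holomorphic $(2,0)$-form on $D$ whose coefficient restricts to $1/z_2$ on $\{z_1=0\}$ and hence cannot extend to $\wt D$. Your appeal at the end of (ii)$\Rightarrow$(i) to a ``classical characterization'' of pseudoconvexity by $H^{0,1}_\infty(\ol D)=0$ is not a standard theorem and would in any case require boundary regularity of solutions that you have not established.

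The gap in (i)$\Rightarrow$(ii) is more serious: the claim that a bounded pseudoconvex domain with merely Lipschitz boundary satisfies $H^{2,1}_\infty(\ol D)=0$ (solvability of $\opa$ in $\cc^\infty(\ol D)$) is not a known theorem --- Kohn's global regularity requires smooth boundary, and for Lipschitz pseudoconvex domains only Sobolev estimates of finite order are available. Even the Hausdorffness of $H^{2,1}_\infty(\ol D)$, which is all you would need for the duality to be perfect, is not free. The paper circumvents this entirely: it obtains $H^{0,1}_{\ol D,\infty}(\cb^2)=0$ from the connectedness of $\cb^2\setminus\ol D$ via the Hartogs phenomenon (Proposition \ref{CNconnected}), and it obtains the Hausdorffness of $H^{0,2}_{\ol D,\infty}(\cb^2)$ from the Hausdorffness of $H^{0,1}_\infty(\cb^2\setminus D)$ on the \emph{complement} (Theorem 5 of \cite{CS2012}, which is where the Lipschitz hypothesis enters), by an explicit argument correcting a global solution $g$ of $\opa g=f$ by $\opa\wt h$ for a smooth extension $\wt h$ of a solution of $\opa h=g$ on $\cb^2\setminus D$. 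You would need to replace your first paragraph by an argument of this type, and rebuild the duality throughout on the pairing with extendable currents.
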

\begin{proof}
 By Serre duality (\cite{Ca} or Theorem 2.7 in \cite{LaShdualiteL2}) assertion (ii) implies that
 $\check{H}^{2,q}(D)$ is Hausdorff, for all $1\leq q\leq 2$ and moreover  $\check{H}^{2,1}(D)=0$ as the dual space to $H^{0,1}_{\ol D,\infty}(\cb^2)$.
 Let us prove now that the condition $\check{H}^{2,1}(D)=0$ implies that $D$ is pseudoconvex.
 We will follow the methods used by Laufer \cite{Lf} for the usual Dolbeault cohomology and prove  by contradiction.

Assume $D$ is not pseudoconvex, then there exists a domain $\wt D$ strictly containing $D$ such that any holomorphic function on $D$ extends holomorphically to $\wt D$. Since interior($\ol D$)$=D$, after a translation and a rotation we may assume that $0\in\wt D\setminus\ol D$ and there exists a point $z_0$ in the intersection of the plane $\{(z_1,z_2)\in\cb^2~|~z_1=0\}$ with $D$, which belongs to the same connected component of the intersection of that plane with $\wt D$.

Let us denote by $B(z_1,z_2)$ the $(0,1)$-form defined by
$$B(z_1,z_2)=\frac{\ol z_1~d\ol z_2-\ol z_2~d\ol z_1}{|z|^4}\wedge dz_1\wedge dz_2.$$
It is derived from the Bochner-Martinelli kernel in $\cb^2$ and   is a $\opa$-closed form on $\cb^2\setminus\{0\}$. Then the $L^1$-form $\frac{\ol z_2}{|z|^2}\wedge dz_1\wedge dz_2$ defines a distribution in $\cb^2$ which satisfies
$$\opa(\frac{-\ol z_2}{|z|^2}~dz_1\wedge dz_2)=z_1B(z_1,z_2)\qquad \text{ on }\cb^2\setminus\{0\}.$$
On the other hand, if  $\check{H}^{2,1}(D)=0$, there exists an extendable $(2,0)$-current $v$ such that $\opa v=B$ on $D$ and by the  regularity of   $\opa$ in bidegree $(2,1)$, $v$ is smooth on $D$, since $B$ is smooth on $\cb^2\setminus\{0\}$. Set
$$F=z_1v+\frac{\ol z_2}{|z|^2}\wedge dz_1\wedge dz_2.$$ Then $F$ is a holomorphic $(2,0)$-form on $D$, so its coefficient $F_{12}$
should extend holomorphically to $\wt D$, but we have $F_{12}(0,z_2)=\frac{1}{z_2}$ on $D\cap\{z_1=0\}$, which is holomorphic and singular at $z_2=0$.  This    gives the contradiction since $0\in\wt D\setminus D$.
This proves that  (ii) $\Rightarrow$ (i).

For the converse, first note that if $D$ is a pseudoconvex domain in $\cb^2$, then $\cb^2\setminus D$ is connected and by Proposition \ref{CNconnected}, we have $H^{0,1}_{\ol D,\infty}(\cb^2)=0$. Then we apply Theorem 5 in \cite{CS2012} to get that if $D$ is pseudoconvex with Lipschitz boundary,  then $H^{0,1}_\infty(\cb^2\setminus D)$ is Hausdorff. Let us prove that if $H^{0,1}_\infty(\cb^2\setminus D)$ is Hausdorff,  then $H^{0,2}_{\ol D,\infty}(\cb^2)$ is Hausdorff.

Let $f$ be a $\opa$-closed $(0,2)$-form on $\cb^2$ with support contained in $\ol D$ such that for any  $\opa$-closed $(2,0)$-current $T$ on $D$ extendable as a current to $\cb^2$, we have $<T,f>=0$. Since $H^{0,2}(\cb^2)=0$, there exists a smooth $(0,1)$-form $g$ on $\cb^2$ such that $\opa g=f$ on $\cb^2$, in particular $\opa g=0$ on $\cb^2\setminus\ol D$.

Let $S$ be any $\opa$-closed $(2,1)$-current on $\cb^2$ with compact support in $\cb^2\setminus D$, then, since $H^{2,1}_c(\cb^2)=0$, there exists a compactly supported  $(2,0)$-current $U$ on $\cb^2$ such that $\opa U=S$ and in particular $\opa U=0$ on $D$.

 Thus
$$<S,g>=<\opa U,g>=<U,\opa g>=<U,f>=0,$$
by hypothesis on $f$.
Therefore the Hausdorff property of $H^{0,1}_{\infty}(X\setminus D)$ implies there exists a smooth function $h$ on $X\setminus D$ such that $\opa h=g$. Let $\wt h$ be a smooth extension of $h$ to $\cb^2$, then $u=g-\opa\wt h$ is a smooth form with support in $\ol D$ and
$$\opa u=\opa (g-\opa\wt h)=\opa g=f.$$
This proves that $H^{0,2}_{\ol D,\infty}(\cb^2)$ is Hausdorff,  which  proves that  (i) $\Rightarrow$ (ii).
\end{proof}

\section{The case of the unbounded Hartogs triangle in $\cb^2$}

In $\cb^2$, let us define the domains $\hb^+$ and $\hb^-$ by
\begin{align*}
\hb^+&=\{(z,w)\in\cb^2~|~|z|<|w|\}\\
\hb^-&=\{(z,w)\in\cb^2~|~|z|>|w|\}
\end{align*}
then $\hb^+\cap\hb^-=\emptyset$ and $\ol\hb^+\cup\ol\hb^-=\cb^2$.

Let us denote by $H^{0,1}_{\ol\hb^-,\infty}(\cb^2)$ (resp.
$H^{0,1}_{\ol\hb^-,cur}(\cb^2)$, $H^{0,1}_{\ol\hb^-,L^2_{loc}}(\cb^2)$,$H^{0,1}_{\ol\hb^-,\cc^k}(\cb^2)$ ) the Dolbeault
cohomology group of bidegree $(0,1)$ for smooth forms (resp.
currents, $L^2_{loc}$-forms, $\cc^k$-forms) with support in $\ol\hb^-$.

The vanishing of these groups means that one can solve the $\opa$
equation with prescribed support in $\ol\hb^-$ in the smooth category
(resp. the space of currents, the space of $L^2$-forms, the space of $\cc^k$-forms).
\medskip

We can apply Propositions \ref{CNsmooth} and \ref{CSsmooth} for $\Omega=\hb^-$, since $H^{0,1}(\cb^2)=0$, and we get

\begin{prop}\label{smooth}
We have $H^{0,1}_{\ol\hb^-,\infty}(\cb^2)=0$ if and only if any
holomorphic function on $\hb^+$ which is smooth on $\ol\hb^+$ extends
as a holomorphic function to $\cb^2$.
\end{prop}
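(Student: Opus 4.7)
The plan is simply to instantiate Propositions \ref{CNsmooth} and \ref{CSsmooth} with the specific choice $X=\cb^2$ and $\Omega=\hb^-$, so the real work is just checking that the hypotheses and set identifications match.

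First I would verify the set-theoretic translations. Since $\ol\hb^-=\{(z,w)\in\cb^2\mid |z|\geq|w|\}$, its complement is $\Omega^c=\cb^2\setminus\ol\hb^-=\{|z|<|w|\}=\hb^+$. Similarly $\cb^2\setminus\hb^-=\ol\hb^+$, so the condition ``smooth on $X\setminus\Omega=\ol{\Omega^c}$'' from the earlier propositions is exactly ``smooth on $\ol\hb^+$''. With these identifications in place, the statement of the proposition is the specialization of the biconditional ``$H^{0,1}_{\ol\Omega,\infty}(X)=0$ iff every holomorphic function on $\Omega^c$, smooth on $\ol{\Omega^c}$, extends holomorphically to $X$.''

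The forward implication follows directly from Proposition \ref{CNsmooth}, which requires no auxiliary hypothesis on $X$: assuming $H^{0,1}_{\ol\hb^-,\infty}(\cb^2)=0$ gives the extension property for smooth-up-to-the-boundary holomorphic functions on $\hb^+$.

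For the converse, I would invoke Proposition \ref{CSsmooth}, whose hypothesis $H^{0,1}(X)=0$ is satisfied for $X=\cb^2$ (this is standard: $\cb^2$ is Stein, so $H^1(\cb^2,\oc)=0$, and the Dolbeault isomorphism gives $H^{0,1}(\cb^2)=0$). The extension hypothesis then yields $H^{0,1}_{\ol\hb^-,\infty}(\cb^2)=0$. There is no genuine obstacle here; the proposition is essentially a dictionary entry converting a cohomological statement about the closed Hartogs triangle $\ol\hb^-$ into an extension statement for holomorphic functions on its open complement $\hb^+$. The actual content of the paper will be to decide whether the extension property holds in this geometry, which is what drives Theorem \ref{nonisom}.
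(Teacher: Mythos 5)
Your proposal is correct and follows exactly the paper's route: the proposition is obtained by specializing Proposition \ref{CNsmooth} (forward direction) and Proposition \ref{CSsmooth} (converse, using $H^{0,1}(\cb^2)=0$) to $X=\cb^2$, $\Omega=\hb^-$, with the identifications $\Omega^c=\hb^+$ and $\cb^2\setminus\hb^-=\ol\hb^+$ that you verified.
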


\begin{prop}\label{ext}
Any holomorphic function on $\hb^+$ which is smooth on $\ol\hb^+$ extends
as a holomorphic function to $\cb^2$.
\end{prop}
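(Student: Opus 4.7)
My plan is to construct the entire extension as a power series whose coefficients are the partial derivatives of $f$ at the origin, exploiting the biholomorphism $\Phi:\hb^+\to\Delta\times\cb^*$ defined by $\Phi(z,w)=(z/w,w)$, where $\Delta$ denotes the open unit disc in $\cb$. Pulling $f$ back, set $g(\zeta,w):=f(\zeta w,w)$, holomorphic on $\Delta\times\cb^*$. Since $(0,0)\in\ol\hb^+$ and $f$ is continuous there, $g$ is uniformly bounded on $\ol\Delta\times\{0<|w|<\varepsilon\}$ for small $\varepsilon$, so Riemann's removable-singularities theorem across the analytic hypersurface $\Delta\times\{0\}$ extends $g$ to a holomorphic function on $\Delta\times\cb$. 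In particular, for every $\zeta\in\Delta$, $g(\zeta,\cdot)$ is entire in $w$ with convergent Taylor series $g(\zeta,w)=\sum_{k\ge 0}c_k(\zeta)w^k$.

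Next, applying the chain rule to $f(\zeta w,w)$ and passing to the limit $w\to 0$, which is legitimate because $f$ is $\ci$ at $(0,0)$ so that every mixed derivative $(\partial_z^j\partial_w^{k-j}f)(0,0)$ is defined and continuous at the origin, I obtain
\[
c_k(\zeta)=\sum_{j=0}^{k}\frac{\zeta^j}{j!(k-j)!}(\partial_z^j\partial_w^{k-j}f)(0,0),
\]
a polynomial in $\zeta$ of degree at most $k$. Writing $a_{j,l}:=\frac{1}{j!\,l!}(\partial_z^j\partial_w^l f)(0,0)$, the candidate extension is
\[
F(z,w):=\sum_{j,l\ge 0}a_{j,l}z^jw^l.
\]
Cauchy's inequalities for $g$ on any compact set $\{|\zeta|\le\rho,|w|\le R\}\subset\Delta\times\cb$ identify $a_{j,l}$ as the coefficient of $\zeta^j w^{j+l}$ in $g$ and yield $|a_{j,l}|\le M(\rho,R)\rho^{-j}R^{-(j+l)}$; for arbitrary $(z_0,w_0)\in\cb^2$, choosing $R>\max(|z_0|,|w_0|)$ and $\rho\in(|z_0|/R,1)$ makes $\sum|a_{j,l}||z_0|^j|w_0|^l$ absolutely convergent, so $F$ is entire on $\cb^2$. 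On $\hb^+$, substituting $\zeta=z/w\in\Delta$ and reindexing the absolutely convergent double sum gives $F(z,w)=\sum_k c_k(z/w)w^k=g(z/w,w)=f(z,w)$, establishing the extension.

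The main obstacle is the polynomial-degree identity for $c_k$. This is precisely where smoothness of $f$ at the non-Lipschitz corner $(0,0)\in\ol\hb^+$ is indispensable, since the chain-rule evaluation of $\partial_w^k[f(\zeta w,w)]$ at $w=0$ invokes every mixed derivative $(\partial_z^j\partial_w^{k-j}f)(0,0)$; without smoothness at the origin, $c_k$ could be any holomorphic function on $\Delta$. The sharpness is visible in the naive alternative of expanding $f(\cdot,w_0)$ in $z$: smoothness on each slice alone yields only the boundary-regularity bound $|\partial_z^j f(0,w_0)/j!|=O_N(j^{-N}|w_0|^{-j})$, forcing convergence of the term-by-term extension exactly on $\{|z|<|w_0|\}=\hb^+$ and no further. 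It is the joint smoothness at the corner that produces the extra polynomial-degree constraint on $c_k(\zeta)$ and, via the Cauchy bounds, promotes $F$ from holomorphic on $\hb^+$ to entire on all of $\cb^2$.
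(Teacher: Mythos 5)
Your proof is correct, and it takes a genuinely different route from the paper's, which simply invokes Sibony's extension theorem for the bounded Hartogs triangles $\hb^+\cap\Delta(0,R)\times\Delta(0,R)$ and then continues analytically. Your argument is self-contained: the change of variables $(\zeta,w)=(z/w,w)$, the Riemann removable-singularity theorem across $\Delta\times\{0\}$ (which needs only boundedness of $f$ near the origin), and the chain-rule identity $k!\,c_k(\zeta)=\lim_{w\to 0}(\zeta\partial_z+\partial_w)^k f(\zeta w,w)$ showing that $c_k$ is a polynomial of degree at most $k$ --- this degree bound is exactly what turns the a priori arbitrary expansion $\sum_k c_k(z/w)w^k$ into a genuine power series in $(z,w)$, and the Cauchy estimates for $g$ on $\{|\zeta|\le\rho\}\times\{|w|\le R\}$ then give convergence of $F$ on all of $\cb^2$. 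All the steps check out: the limit defining $c_k$ exists because the mixed partials $\partial_z^j\partial_w^{k-j}f$ extend continuously to the corner $(0,0)$; the identification of $a_{j,l}$ with the Taylor coefficient of $\zeta^j w^{j+l}$ of $g$ is forced by uniqueness of Taylor coefficients; and the final reindexing on $\hb^+$ is justified by absolute convergence. Besides avoiding the citation, your approach yields a slightly sharper statement: only local boundedness of $f$ and continuous extendability of all derivatives of $f$ at the single corner point $(0,0)$ are used, so smoothness along the rest of $b\hb^+$ plays no role; the paper's route instead reduces the unbounded triangle to the classical bounded one and leans on the literature for the key extension step.
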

\begin{proof}
Let $f\in\ci(\ol\hb^+)\cap \oc(\hb^+)$. By Sibony's result (\cite{Si}, page 220), for any
$R>0$, the restriction of $f$ to
$\hb^+\cap\Delta(0,R)\times\Delta(0,R)$ extends holomorphically to the
bidisc $\Delta(0,R)\times\Delta(0,R)$ and then by analytic continuation $f$
extends holomorphically to $\cb^2$.
\end{proof}

It follows immediately from Proposition \ref{smooth} and Proposition
\ref{ext} that
\begin{cor}\label{vanishsmooth}
$H^{0,1}_{\ol\hb^-,\infty}(\cb^2)=0$.
\end{cor}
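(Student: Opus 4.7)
The plan is to chain the two immediately preceding results, which are set up precisely for this purpose. By Proposition \ref{smooth}, the vanishing $H^{0,1}_{\ol\hb^-,\infty}(\cb^2)=0$ is equivalent to the assertion that every $f \in \ci(\ol\hb^+) \cap \oc(\hb^+)$ extends as a holomorphic function to $\cb^2$. To see why this equivalence is the right reformulation, one unwinds the standard argument (the one used in Proposition \ref{CSsmooth}): given a smooth $\opa$-closed $(0,1)$-form $f$ on $\cb^2$ with $\supp f \subset \ol\hb^-$, the fact that $H^{0,1}(\cb^2)=0$ produces a global solution $g \in \ci(\cb^2)$ with $\opa g = f$; this $g$ is automatically holomorphic on the open set $\cb^2 \setminus \ol\hb^- = \hb^+$ and smooth up to $\pa\hb^+$. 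If one can produce a holomorphic extension $\wt g \in \oc(\cb^2)$ of $g|_{\hb^+}$, then $g - \wt g$ solves $\opa(g-\wt g) = f$ and vanishes on $\hb^+$, so its support lies in $\ol\hb^-$, as required.

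The needed extension is delivered by Proposition \ref{ext}, whose proof invokes Sibony's extension theorem \cite{Si}: for each $R > 0$ the restriction of a function in $\ci(\ol\hb^+) \cap \oc(\hb^+)$ to $\hb^+ \cap (\Delta(0,R)\times\Delta(0,R))$ extends holomorphically to the full bidisc, and these extensions then patch by analytic continuation to give a holomorphic function on all of $\cb^2$. Combining Proposition \ref{smooth} with Proposition \ref{ext} closes the loop and yields the corollary. There is no genuine obstacle at the level of the corollary itself; the real content is concentrated in Proposition \ref{ext}, where Sibony's bidisc extension across the distinguished boundary of $\hb^+$ does all the work.
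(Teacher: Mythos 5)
Your proposal is correct and follows exactly the paper's route: the corollary is deduced immediately by combining Proposition \ref{smooth} with Proposition \ref{ext}, the latter resting on Sibony's bidisc extension theorem. Your unwinding of the equivalence in Proposition \ref{smooth} (via the argument of Proposition \ref{CSsmooth}) is accurate but is just the content already packaged in that proposition.
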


Let us consider now the case of currents. We can apply Proposition \ref{CNCk} to get
\begin{prop}\label{cur}
Assume we have $H^{0,1}_{\ol\hb^-,\cc^k}(\cb^2)=0$, $k\geq 0$ then any
holomorphic function on $\hb^+$, which is of class $\cc^{k+1}$ on $\ol\hb^+$, extends
as a holomorphic function to $\cb^2$.
\end{prop}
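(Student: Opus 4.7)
\textbf{Proof plan for Proposition \ref{cur}.}

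The statement is simply Proposition \ref{CNCk} applied to $X=\cb^2$ and $\Omega=\hb^-$. The plan is therefore to verify that the identifications match and then run the argument of Proposition \ref{CNCk} (whose proof parallels that of Proposition \ref{CNcur}) in this concrete setting. First, since $\ol{\hb^-}=\{|z|\geq|w|\}$, one checks immediately that $\Omega^c=\cb^2\setminus\ol{\hb^-}=\hb^+$ and $X\setminus\Omega=\cb^2\setminus\hb^-=\ol{\hb^+}$. Thus the hypothesis of Proposition \ref{CNCk} translates exactly to a $\cc^{k+1}$ extension problem across the boundary of the Hartogs triangle.

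Spelled out directly, the argument would go as follows. Given $f\in\oc(\hb^+)\cap\cc^{k+1}(\ol{\hb^+})$, the first step is to produce a global $\cc^{k+1}$ extension $\wt f\in\cc^{k+1}(\cb^2)$ of $f$. This is furnished by Whitney's extension theorem applied to the closed set $\ol{\hb^+}$, since the partial derivatives of $f$ up to order $k+1$ are continuous on $\ol{\hb^+}$ and therefore form an admissible Whitney jet on $\ol{\hb^+}$ (no Lipschitz regularity at the origin is needed for this). Now consider the $(0,1)$-form $\opa\wt f$: it is of class $\cc^k$, automatically $\opa$-closed, and its support is contained in $\ol{\hb^-}$ because $\wt f=f$ is holomorphic on $\hb^+$.

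The invoked hypothesis $H^{0,1}_{\ol{\hb^-},\cc^k}(\cb^2)=0$ then provides $g\in\cc^k(\cb^2)$ with $\supp g\subset\ol{\hb^-}$ and $\opa g=\opa\wt f$. Setting $F=\wt f-g$, the function $F$ is $\cc^k$ on $\cb^2$ with $\opa F=0$, so by elliptic regularity $F\in\oc(\cb^2)$. Since $\ol{\hb^-}\cap\hb^+=\emptyset$, we have $g\equiv 0$ on $\hb^+$, and hence $F=f$ on $\hb^+$, giving the desired holomorphic extension.

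The only step that is not formal bookkeeping is the construction of the $\cc^{k+1}$ extension $\wt f$, and this is the expected technical point: the Hartogs triangle fails to be Lipschitz at the origin, so one cannot simply quote Stein's extension operator. The fallback is Whitney's theorem on closed sets, which applies without any regularity hypothesis on the boundary and produces a genuine $\cc^{k+1}$ extension to $\cb^2$. Once this is in hand, everything else is a verbatim repetition of the proof of Proposition \ref{CNcur} with the distribution solution replaced by a $\cc^k$ solution.
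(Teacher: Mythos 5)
Your proposal is correct and is essentially the paper's own proof: the paper disposes of this proposition by directly invoking Proposition \ref{CNCk} with $X=\cb^2$ and $\Omega=\hb^-$ (so that $\Omega^c=\hb^+$ and $X\setminus\Omega=\ol{\hb^+}$), and the argument you spell out (Whitney extension $\wt f$, solving $\opa g=\opa\wt f$ with $\cc^k$ regularity and support in $\ol{\hb^-}$, then taking $F=\wt f-g$) is exactly the intended proof of that proposition. The only point to state with a little more care, which the paper also leaves implicit, is the passage from ``derivatives up to order $k+1$ continuous on $\ol{\hb^+}$'' to a Whitney jet: this uses that $\ol{\hb^+}=\{|z|\le|w|\}$ is Whitney-regular (quasiconvex), or alternatively one interprets ``of class $\cc^{k+1}$ on $\ol{\hb^+}$'' as being the restriction of a global $\cc^{k+1}$ function, after which Whitney's theorem is not even needed.
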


\begin{thm}\label{ck}
For any $k\geq 0$, $H^{0,1}_{\ol\hb^-,\cc^k}(\cb^2)\neq 0$,  and $H^{0,1}_{\ol\hb^-,cur}(\cb^2)\neq 0$.
\end{thm}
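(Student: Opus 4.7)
The strategy is to invoke the contrapositive of Proposition~\ref{cur}: it suffices, for each $k\geq 0$, to produce a holomorphic function on $\hb^+$ of class $\cc^{k+1}$ on $\ol\hb^+$ that does not extend holomorphically to all of $\cb^2$. Then Proposition~\ref{cur} forces $H^{0,1}_{\ol\hb^-,\cc^k}(\cb^2)\neq 0$, and the injectivity of $H^{0,1}_{\ol\hb^-,\cc^k}(\cb^2)\hookrightarrow H^{0,1}_{\ol\hb^-,cur}(\cb^2)$ from Proposition~\ref{injective} immediately gives $H^{0,1}_{\ol\hb^-,cur}(\cb^2)\neq 0$ as well.

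The candidate I would use is the one-parameter family
$$ f_k(z,w)\;=\;\frac{z^{k+3}}{w}, \qquad k\geq 0. $$
It is holomorphic on $\hb^+$, and the only point of $\ol\hb^+$ on which the denominator vanishes is the corner $(0,0)$, since $|z|\leq |w|=0$ forces $z=0$; away from $(0,0)$, $f_k$ is smooth on $\ol\hb^+$. To check $\cc^{k+1}$-regularity at $(0,0)$, for any pair $(a,b)$ with $a+b\leq k+1$ I would estimate, on $\hb^+$,
$$ \left|\frac{\partial^{a+b}f_k}{\partial z^a\,\partial w^b}(z,w)\right|\;=\;c_{a,b}\,\frac{|z|^{k+3-a}}{|w|^{b+1}}\;\leq\;c_{a,b}\,|w|^{k+2-(a+b)}, $$
which tends to $0$ as $(z,w)\to (0,0)$ since the exponent is $\geq 1$. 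All antiholomorphic derivatives vanish because $f_k\in\oc(\hb^+)$. Therefore every real partial derivative of $f_k$ of order $\leq k+1$ extends continuously by $0$ at the corner, so $f_k\in\cc^{k+1}(\ol\hb^+)$.

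Non-extendability is immediate: if there were $F\in\oc(\cb^2)$ agreeing with $f_k$ on $\hb^+$, then the holomorphic identity $wF(z,w)=z^{k+3}$ would hold on $\hb^+$ and, by analytic continuation, on all of $\cb^2$; specializing to $w=0$, $z\neq 0$ gives the contradiction $0=z^{k+3}$. Combining this with Proposition~\ref{cur} and Proposition~\ref{injective} yields both non-vanishing statements of Theorem~\ref{ck}.

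The delicate step — really the only one — is the regularity computation at the corner $(0,0)$. The exponent $k+3$ is chosen so that the gap $m-n=k+2$ in $z^m/w^n$ is the smallest that kills all derivatives of order $\leq k+1$ at $(0,0)$ while keeping $f_k$ genuinely meromorphic along $\{w=0\}$; this twin role is what makes a single explicit formula simultaneously handle the regularity and the obstruction to extension.
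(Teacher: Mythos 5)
Your proposal is correct and follows essentially the same route as the paper: the paper's proof also applies the contrapositive of Proposition~\ref{cur} together with Proposition~\ref{injective}, using the same family of counterexamples $h(z,w)=z^{l}\bigl(\tfrac{z}{w}\bigr)$ with $l\geq k+2$, of which your $f_k=z^{k+3}/w$ is the minimal member (the paper rules out extension via unboundedness near $\{z\neq 0,\,w=0\}$ rather than your $wF=z^{k+3}$ identity, an inessential variation). Your explicit derivative estimate at the corner only spells out the regularity claim the paper asserts without computation.
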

\begin{proof}
Let us consider the function $h$ define on $\hb^+$ by $h(z,w)=z^l
(\frac{z}{w})$, $l\geq 0$. It is of class $\cc^{k+1}$ on $\ol\hb^+$, if
$l\geq k+2$, but does not extend as a  holomorphic function to $\cb^2$. In
fact if $h$   admits a  holomorphic extension $\wt h$ to $\cb^2$,
then we would have
$$\wt h(z,w)=z^l(\frac{z}{w})\quad {\rm on}\quad \cb^2\setminus\{w=0\},$$
which is not bounded nearby $\{(z,w)\in\cb^2~|~z\neq 0, w=0\}$.
By Proposition \ref{cur}, we get $H^{0,1}_{\ol\hb^-,\cc^k}(\cb^2)\neq 0$.
Then using Proposition \ref{injective} , it follows  $H^{0,1}_{\ol\hb^-,cur}(\cb^2)\neq 0$.

\end{proof}

 Proposition \ref{smooth} still holds if we replace
smooth forms by $W^1_{loc}$-forms (for $D\subset\cb^2$, $W^1_{loc}(\ol D)$
is the space of functions which are in $W^1(\ol D\cap B(0,R))$ for
any $R>0$) in the
following way
\begin{prop}\label{L2}
We have $H^{0,1}_{\ol\hb^-,L^2_{loc}}(\cb^2)=0$ if and only if any
 function $f\in\oc(\hb^+)\cap W^1_{loc}(\ol\hb^+)$,  which is the restriction to $\ol\hb^+$ of a  form with coefficients in $W^1_{loc}(\cb^2)$, extends as a
 holomorphic function to $\cb^2$.
\end{prop}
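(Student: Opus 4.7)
The plan is to prove this biconditional by splitting into two directions, mirroring the structure of Proposition \ref{smooth} from earlier in this section. The forward implication is a direct application of Proposition \ref{CNLp} with $p = 2$, $X = \cb^2$, and $\Omega = \hb^-$: since $\cb^2 \setminus \ol\hb^- = \hb^+$ and $W^{1,2}_{loc} = W^1_{loc}$, the vanishing of $H^{0,1}_{\ol\hb^-, L^2_{loc}}(\cb^2)$ immediately yields the stated extension property.

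For the converse, I would follow the template of Proposition \ref{CSsmooth}, adapted to the $L^2_{loc}$ setting. Given a $\opa$-closed $(0,1)$-form $\phi$ with $L^2_{loc}$ coefficients and support in $\ol\hb^-$, I would first apply the Dolbeault isomorphism in the $L^2_{loc}$ category (Proposition 1.2 of \cite{LaLp}) together with $H^{0,1}(\cb^2) = 0$ to obtain a global primitive $g \in L^2_{loc}(\cb^2)$ with $\opa g = \phi$, then upgrade it to lie in $W^1_{loc}(\cb^2)$. The restriction $g|_{\hb^+}$ is then holomorphic on $\hb^+$ (by Weyl's lemma applied to $\opa g = 0$ there) and in $W^1_{loc}(\ol\hb^+)$, so by the extension hypothesis it extends to some $\wt g \in \oc(\cb^2)$. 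Setting $u := g - \wt g$ gives an $L^2_{loc}$ primitive of $\phi$ satisfying $u|_{\hb^+} = 0$, hence $\supp u \subset \ol\hb^-$, yielding $H^{0,1}_{\ol\hb^-, L^2_{loc}}(\cb^2) = 0$.

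The main obstacle I expect is the regularity upgrade from $L^2_{loc}$ to $W^1_{loc}$ for the primitive $g$, since the Dolbeault isomorphism as cited supplies only $L^2_{loc}$ regularity. I would handle this via a Bochner-Martinelli representation on an exhaustion of $\cb^2$ by balls: the BM integral operator in $\cb^2$ gains one derivative, mapping compactly supported $L^2$ forms to $W^{1,2}$ functions, and any two primitives of $\phi$ on overlapping balls differ by a holomorphic (hence smooth) function, so the local $W^{1,2}$ regularity can be patched into a global $W^1_{loc}$ primitive without affecting the identity $\opa g = \phi$. Without this regularity improvement the restriction $g|_{\hb^+}$ would fall outside the class of functions to which the extension hypothesis applies, and the argument would not close.
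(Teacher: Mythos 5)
Your proposal is correct and follows essentially the same route the paper intends: the forward direction is Proposition \ref{CNLp} with $p=2$, and the converse runs the Proposition \ref{CSsmooth}/\ref{CSLp} template in the $L^2_{loc}$ category, which is exactly what the paper means when it says Proposition \ref{smooth} ``still holds'' with $W^1_{loc}$-forms. The regularity gain you flag is real but standard, and can be obtained even more directly than via Bochner--Martinelli: any distribution solution of $\opa g=\phi$ with $\phi\in L^2_{loc}$ satisfies $\Delta g=4\sum_j\partial_{z_j}\phi_{\bar j}\in W^{-1,2}_{loc}$, so interior elliptic regularity (or, equivalently, comparing $g$ locally with a $W^{1,2}$ solution, the difference being holomorphic) gives $g\in W^1_{loc}(\cb^2)$ outright.
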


\begin{thm}
$H^{0,1}_{\ol\hb^-,L^2_{loc}}(\cb^2)\neq 0$
\end{thm}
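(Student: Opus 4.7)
The plan is to follow the pattern of Theorem~\ref{ck}, but using Proposition~\ref{L2} in place of Proposition~\ref{cur}: it suffices to exhibit a function $h$ that is holomorphic on $\hb^+$, belongs to $W^1_{loc}(\ol\hb^+)$, is the restriction to $\ol\hb^+$ of a function in $W^1_{loc}(\cb^2)$, and does not extend holomorphically to $\cb^2$.

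The candidate is $h(z,w)=z^{l+1}/w$ with $l\geq 2$, drawn from the same family already used in Theorem~\ref{ck}. It is holomorphic on $\hb^+$ since $w\neq 0$ there. Using $|z|\leq|w|$ on $\ol\hb^+$, one obtains the pointwise bounds $|h|\leq|z|^l$, $|\pa h/\pa z|\leq (l+1)|z|^{l-1}$, and $|\pa h/\pa w|\leq|z|^{l-1}$. For $l\geq 2$ these quantities are continuous on $\ol\hb^+$ and vanish at the origin, so $h\in\cc^1(\ol\hb^+)\subset W^1_{loc}(\ol\hb^+)$. The Whitney extension theorem applied to the $\cc^1$-jet of $h$ on the closed set $\ol\hb^+$ then produces a $\cc^1$-extension $\wt h\in\cc^1(\cb^2)\subset W^1_{loc}(\cb^2)$. (An equivalent hands-on alternative is the explicit cutoff $\wt h(z,w)=\chi(|z|^2/|w|^2)\,z^{l+1}/w$, with $\chi$ smooth, equal to $1$ on $[0,1]$ and vanishing past $2$; the $L^2_{loc}$-bounds on the differentiated expression follow from the same inequalities and the fact that the support of $\wt h$ is contained in $\{|z|\le \sqrt{2}\,|w|\}$.)

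Finally, $h$ admits no holomorphic extension to $\cb^2$: exactly as in Theorem~\ref{ck}, any such extension would coincide by analytic continuation with $z^{l+1}/w$ on $\cb^2\setminus\{w=0\}$, and hence be unbounded near $\{w=0,\,z\neq 0\}$. Proposition~\ref{L2} then yields $H^{0,1}_{\ol\hb^-,L^2_{loc}}(\cb^2)\neq 0$. The only nontrivial point is producing the $W^1_{loc}(\cb^2)$-extension of $h$, and the $\cc^1$-regularity on $\ol\hb^+$ reduces this to a routine application of Whitney extension.
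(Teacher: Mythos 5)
Your proof is correct and takes essentially the same route as the paper: the paper's proof uses $h(z,w)=z^3\bigl(\tfrac zw\bigr)=z^4/w$, checks it is $\cc^2$ on $\ol\hb^+$ hence in $W^1_{loc}(\ol\hb^+)$, extends it to $\cb^2$ by the Whitney extension theorem, and concludes from the non-existence of a holomorphic extension via Proposition \ref{L2}, exactly as you do. Your only deviations are cosmetic: you use the family $z^{l+1}/w$, $l\geq 2$, at the $\cc^1$ level instead of $\cc^2$, and you add an explicit cutoff extension (the same device the paper employs in the proof of Proposition \ref{extW1}) as an alternative to Whitney.
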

\begin{proof}
Let us consider the function $h$ defined on $\hb^+$ by $h(z,w)=z^3
(\frac{z}{w})$. It is of class $\cc^2$ on $\ol\hb^+$ and it is in
$W^{1}_{loc}(\ol\hb^+)$ and extends as a $\cc^2$ funtion to $\cb^2$ by
the Whitney extension Theorem, but does not extend as a  holomorphic
function to $\cb^2$. In
fact if $h$ would admit a  holomorphic extension $\wt h$ to $\cb^2$,
then we would have
$$\wt h=z^3(\frac{z}{w})\quad {\rm on}\quad \cb^2\setminus\{w=0\},$$
which is not bounded nearby $\{(z,w)\in\cb^2~|~z\neq 0, w=0\}$.
By Proposition \ref{L2}, we get $H^{0,1}_{\ol\hb^-,L^2_{loc}}(\cb^2)\neq 0$.

\end{proof}

{\parindent=0pt{\bf Remark}: Note that if we replace $\hb^-$ by the classical Hartogs triangle $\tb^-=\hb^-\cap\Delta\times\Delta$, where $\Delta$ is the unit disc in $\cb$, then by Proposition \ref{CNconnected} we have $$H^{0,1}_{\ol\tb^-,L^2_{loc}}(\cb^2)=H^{0,1}_{\ol\tb^-,L^2_{loc}}(\cb^2)=H^{0,1}_{\ol\tb^-,\infty}(\cb^2)=0.$$
So for solving the $\opa$-equation with prescribed support, it is quite different to consider a bounded domain or an unbounded domain as support.}

\section{The case of the Hartogs triangles in $\cx\mathbb P^2$}

In $\cx\mathbb P^2$,  we denote the homogeneous coordinates by
$[z_0,z_1,z_2]$. On the domain where $z_0\neq 0$, we set
$z=\frac{z_1}{z_0}$ and $w=\frac{z_2}{z_0}$.
Let us define the domains $\hb^+$ and $\hb^-$ by
\begin{align*}
\hb^+&=\{[z_0:z_1:z_2]\in\cx\mathbb P^2~|~|z_1|<|z_2|\}\\
\hb^-&=\{[z_0:z_1:z_2]\in\cx\mathbb P^2~|~|z_1|>|z_2|\}
\end{align*}
then $\hb^+\cap\hb^-=\emptyset$ and $\ol\hb^+\cup\ol\hb^-=\cx\mathbb P^2$. These domains are called Hartogs' triangles in $\cx\mathbb P^2$.
The Hartogs triangles  provide examples of non-Lipschitz Levi-flat hypersurfaces  (see \cite{HI}).

  For $k\geq 0$ or $k=\infty$, we  denote by $H^{0,1}_{\ol\hb^-,\cc^k}(\cx\mathbb P^2)$  (resp.
$H^{0,1}_{\ol\hb^-,cur}(\cx\mathbb P^2)$, $H^{0,1}_{\ol\hb^-,L^2}(\cx\mathbb P^2)$) the Dolbeault
cohomology group of bidegree $(0,1)$ for $\cc^k$-smooth forms  (resp.
currents, $L^2$-forms) with support in $\ol\hb^-$.

Again the vanishing of these groups means that one can solve the $\opa$
equation with prescribed support in $\ol\hb^-$ in the $\cc^k$-smooth category
(resp. the space of currents, the space of $L^2$-forms).
\medskip

We can also apply Propositions \ref{CNsmooth} and \ref{CSsmooth} for $\Omega=\hb^-$, since $H^{0,1}(\cx\mathbb P^2)=0$, and we get

 \begin{prop}
We have, for $k\geq 0$ and for $k=\infty$,  $H^{0,1}_{\ol\hb^-,\cc^k}(\cx\mathbb P^2)=0$ if and only if any
holomorphic function on $\hb^+$ which is $\cc^{k+1}$-smooth on $\ol\hb^+$ extends
as a  holomorphic function to $\cx\mathbb P^2$.
\end{prop}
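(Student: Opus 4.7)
The plan is to specialize the general framework of Section~2 to $X=\cx\mathbb P^2$, $\Omega=\hb^-$ and $\Omega^c=\hb^+$, exploiting the fact that $\cx\mathbb P^2$ is a compact K\"ahler manifold with $H^{0,1}(\cx\mathbb P^2)=0$. The forward direction ($\Rightarrow$) is then a direct application of Proposition~\ref{CNCk}: if $H^{0,1}_{\ol\hb^-,\cc^k}(\cx\mathbb P^2)=0$, then every holomorphic function on $\hb^+$ which is $\cc^{k+1}$ on $\ol\hb^+$ extends holomorphically to $\cx\mathbb P^2$.

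For the reverse direction ($\Leftarrow$), I would mimic the proof of Proposition~\ref{CSsmooth}. Given a $\opa$-closed $(0,1)$-form $f\in\cc^k(\cx\mathbb P^2)$ with support in $\ol\hb^-$, the Dolbeault isomorphism provides a primitive $g$ with $\opa g=f$, and the vanishing of $f$ on $\hb^+$ makes $g$ automatically holomorphic there. The refinement compared to the general Proposition~\ref{CSCk}, whose hypothesis is stated for $\cc^k$-smooth functions, is to upgrade the regularity of the primitive by one derivative and produce $g\in\cc^{k+1}$ on $\ol\hb^+$. I would achieve this by choosing the canonical Hodge-theoretic primitive $g=\opa^{*} N f$, where $N$ denotes the Green's operator of the $\opa$-Laplacian on $(0,1)$-forms of $\cx\mathbb P^2$: this operator is well-defined because $H^{0,1}(\cx\mathbb P^2)=0$, and the identity $\opa g=f$ follows from $\opa Nf=0$, which in turn is ensured by $H^{0,2}(\cx\mathbb P^2)=0$ (the $(0,2)$-form $\opa Nf$ being harmonic). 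Elliptic regularity for $N$ (of order $-2$) followed by $\opa^{*}$ (of order $1$) then gives the required gain $f\in\cc^k\Rightarrow g\in\cc^{k+1}$. Plugging $g|_{\ol\hb^+}$ into the extension hypothesis yields $\wt g\in\oc(\cx\mathbb P^2)$ agreeing with $g$ on $\hb^+$, and $h:=g-\wt g$ is then a $\cc^k$ primitive of $f$ with support contained in $\ol\hb^-$. The case $k=\infty$ is immediate since $\opa^{*}N$ preserves $\cc^\infty$.

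The main technical point is thus the regularity upgrade from $\cc^k$ data to a $\cc^{k+1}$ canonical solution on $\cx\mathbb P^2$, which is precisely what allows the hypothesis to be formulated with $\cc^{k+1}$-smoothness on $\ol\hb^+$ rather than with $\cc^k$-smoothness as in the abstract Proposition~\ref{CSCk}. This upgrade depends decisively on the compactness and the K\"ahler structure of $\cx\mathbb P^2$, and on the vanishing of $H^{0,1}$ and $H^{0,2}$ in bidegree $(0,\cdot)$, so the analogous reverse direction on a less structured ambient manifold would have to be stated with $\cc^k$-smoothness, in accordance with Proposition~\ref{CSCk}.
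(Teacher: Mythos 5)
Your forward implication and the case $k=\infty$ are fine and coincide with the paper's route: the forward direction is precisely Proposition \ref{CNCk} specialized to $X=\cx\mathbb P^2$, $\Omega=\hb^-$, and for $k=\infty$ the converse is Proposition \ref{CSsmooth} verbatim since $\cc^{k+1}=\cc^{k}=\ci$ there. You have also correctly spotted the real issue for finite $k$: Proposition \ref{CSCk} assumes the extension of holomorphic functions that are $\cc^{k}$ on $\ol{\Omega^c}$, whereas the converse here only assumes it for $\cc^{k+1}$ functions, so one derivative must be recovered somewhere.

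The gap is that your proposed recovery fails. The canonical solution operator $\opa^{*}N$ is a pseudodifferential operator of order $-1$, and operators of negative integer order do not gain a full derivative on the integer H\"older scale: $D^{k+1}\opa^{*}N$ is, modulo smoothing terms, an operator of order $0$ (a Calder\'on--Zygmund operator) applied to $D^{k}f$, and such operators do not map $\cc^{0}$ into $\cc^{0}$ (they map $L^\infty$ into BMO). Consequently $f\in\cc^{k}$ only yields $\opa^{*}Nf\in\cc^{k,\alpha}$ for every $\alpha<1$ (equivalently $W^{k+1,p}_{loc}$ for all finite $p$), not $\opa^{*}Nf\in\cc^{k+1}$; your argument would be correct in the scales $\cc^{k,\alpha}$, $0<\alpha<1$, or $W^{s,p}$, but it breaks exactly at the integer endpoint you need. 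Holomorphy of $g$ in $\hb^+$ does not help, because what is required is $\cc^{k+1}$ regularity up to $b\hb^+$, where the kernel estimates for $\opa^{*}N$ degenerate as one approaches $\supp f\subset\ol\hb^-$. The paper does not attempt such an upgrade: its two implications are simply Propositions \ref{CNCk} and \ref{CSCk}, which carry different regularity thresholds ($\cc^{k+1}$ in the necessity, $\cc^{k}$ in the sufficiency), and in the application that follows both thresholds are satisfied because every holomorphic function on $\hb^+$ that is merely continuous on $\ol\hb^+$ is constant. To justify the literal ``if and only if'' with $\cc^{k+1}$ on both sides you would need a different mechanism for the derivative gain, or you should state the converse with the $\cc^{k}$ hypothesis as in Proposition \ref{CSCk}.
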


\begin{prop}
Any holomorphic function on $\hb^+$ which is continuous  on $\ol\hb^+$ is constant.
\end{prop}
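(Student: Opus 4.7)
The plan is to work in a well-chosen affine chart in which $\hb^+$ becomes a product $\mathbb{C}\times\Delta$, and then apply Liouville's theorem on the $\mathbb{C}$-slices.

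Concretely, I pass to the affine chart $\{z_2\neq 0\}$ with coordinates $u=z_0/z_2$, $v=z_1/z_2$. In this chart, $\hb^+$ is exactly $\{(u,v)\in\cb^2\mid |v|<1\}=\cb\times\Delta$, which is unbounded: each horizontal slice $\{(u,v_0): u\in\cb\}$ with $|v_0|<1$ is an entire complex line contained in $\hb^+$. The crucial observation about the closure is that, as $u\to\infty$ with $v_0$ fixed, the point $[u:v_0:1]=[1:v_0/u:1/u]$ tends in $\cx\mathbb P^2$ to the single point $[1:0:0]$, which belongs to $\ol\hb^+$ (it is a limit of points $[1:\epsilon:\delta]$ with $|\epsilon|<|\delta|$).

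Given $f\in\oc(\hb^+)\cap\cc^0(\ol\hb^+)$, set $g(u,v)=f([u:v:1])$, holomorphic on $\cb\times\Delta$. Fix $v_0\in\Delta$; then $u\mapsto g(u,v_0)$ is entire on $\cb$, and by continuity of $f$ at $[1:0:0]$ we have $g(u,v_0)\to f([1:0:0])$ as $u\to\infty$. In particular the slice function is bounded, and by Liouville's theorem it is constant. Thus $g(u,v)=h(v)$ for some $h\in\oc(\Delta)$, and computing the limit $u\to\infty$ gives $h(v)=f([1:0:0])$ for every $v$. Hence $f$ is constant on the dense open subset $\hb^+\cap\{z_2\neq 0\}$, and therefore on all of $\hb^+$.

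The main (and really only) obstacle is spotting the right coordinate chart: one has to realize that the condition $|z_1|<|z_2|$ becomes trivial after dividing by $z_2$, turning $\hb^+$ into the product $\cb\times\Delta$ with entire lines in it, after which continuity at the single added point $[1:0:0]$ turns those lines into bounded entire functions. No estimate on the rate of convergence or any regularity beyond $\cc^0$ is needed, which already suggests this rigidity statement is much stronger than its $\cb^2$-counterpart in Proposition \ref{ext}.
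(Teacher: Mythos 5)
Your proof is correct, and it takes a genuinely different route from the paper's. You work inside the domain: in the chart $\{z_2\neq 0\}$ (which contains all of $\hb^+$, since $z_2=0$ is impossible on $\hb^+$) the domain becomes $\cb\times\Delta$, and the entire slices $\cb\times\{v_0\}$ are the affine parts of projective lines that all pass through the single boundary point $[1:0:0]$; continuity of $f$ at that one point plus Liouville then forces $f\equiv f([1:0:0])$. The paper instead works on the boundary: $b\hb^+$ is Levi-flat, foliated by the compact curves $S_\theta=\{z_1=e^{i\theta}z_2\}$, a continuous CR function is constant on each compact leaf, and all leaves meet at $[1:0:0]$, which gives constancy on $b\hb^+$ (constancy on $\hb^+$ then follows by the maximum principle, left implicit in the paper). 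Your argument is more elementary — nothing beyond Liouville — and in fact proves a slightly stronger statement, since it only uses continuity of $f$ at the single point $[1:0:0]$ rather than on all of $\ol\hb^+$; it also makes transparent why bounded holomorphic functions on $\hb^+$ are exactly bounded holomorphic functions of $z_1/z_2$, consistent with Proposition \ref{holoL2}. What the paper's route buys is the explicit display of the foliated Levi-flat structure of $b\hb^+$, which is thematic for the rest of the section. A trivial remark: since $\hb^+\subset\{z_2\neq 0\}$, the closing density step is unnecessary — the chart already covers all of $\hb^+$.
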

\begin{proof}
Let $f\in \cc(\ol\hb^+)\cap \oc(\hb^+)$. Notice that the boundary $b \hb^+$  of  $\hb^+$ is foliated by a family of compact complex curves
described in non-homogeneous  coordinates by
\begin{equation} S_\theta =\{ z=e^{i\theta} w\},\qquad \theta\in \mathbb R.\end{equation}
 Restricted to each fixed $\theta$,  $f$ is a continuous $CR$ function on the  compact Riemann surface $S_\theta$.
 Thus $f$ must be a constant on each $S_\theta$. Since every Riemann surface $S_\theta$ contains the point $(0,0)$, this implies $f$
 must be constant on $b \hb^+$.

\end{proof}

  Note that in the case of the unbounded Hartogs triangle in $\cb^2$, the function $f$ needs to be of class $\ci$ on $\ol\hb^+$ to be extendable as a holomorphic function to $\cb^2$ (see Proposition \ref{smooth} and the beginning of the proof of Theorem \ref{ck}). But  in $\cx\mathbb P^2$, in contrary to $\cb^2$ we get (compare to Corollary \ref{vanishsmooth} and Theorem \ref{ck})  from the previous propositions that
\begin{cor}
For each $k\geq 0$, $H^{0,1}_{\ol\hb^-,\cc^k}(\cx\mathbb P^2)=0$ and $H^{0,1}_{\ol\hb^-,\infty}(\cx\mathbb P^2)=0$ .
\end{cor}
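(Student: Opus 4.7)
The strategy is to apply the equivalence stated in the proposition immediately preceding this corollary: for $k\geq 0$ and $k=\infty$, the vanishing $H^{0,1}_{\ol\hb^-,\cc^k}(\cx\mathbb P^2)=0$ holds precisely when every holomorphic function on $\hb^+$ that is $\cc^{k+1}$-smooth on $\ol\hb^+$ admits a holomorphic extension to $\cx\mathbb P^2$. So my task reduces to verifying this extension property for all relevant $k$.

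This is immediate from the preceding proposition, which gives considerably more: any holomorphic function on $\hb^+$ that is merely \emph{continuous} on $\ol\hb^+$ must be constant. Given a holomorphic $f$ on $\hb^+$ that is $\cc^{k+1}$-smooth (resp.\ $\ci$) on $\ol\hb^+$, I note that $f$ is in particular continuous on $\ol\hb^+$, hence constant. A constant function on $\hb^+$ trivially extends as the same constant function to all of $\cx\mathbb P^2$. Applying the equivalence in the opposite direction yields $H^{0,1}_{\ol\hb^-,\cc^k}(\cx\mathbb P^2)=0$ for every $k\geq 0$, and the identical argument (smooth functions are continuous) handles the $k=\infty$ case.

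There is essentially no obstacle to overcome here: all the substantive work is already packaged into the previous proposition, whose proof exploits the foliation of $b\hb^+$ by the compact Riemann surfaces $S_\theta=\{z=e^{i\theta}w\}$, each passing through $(0,0)$, to force a continuous CR function on $b\hb^+$ to be constant. Once that rigidity is in hand, the cohomological consequence stated in this corollary is formal. The contrast with the unbounded Hartogs triangle in $\cb^2$ (Theorem \ref{ck}) is exactly the observation that in $\cx\mathbb P^2$ the leaves $S_\theta$ are now compact Riemann surfaces, which forces the rigidity that was absent in the affine setting.
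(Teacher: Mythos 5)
Your argument is correct and is exactly the paper's: the corollary is deduced from the two preceding propositions by noting that a holomorphic function on $\hb^+$ which is $\cc^{k+1}$-smooth (or $\ci$) on $\ol\hb^+$ is in particular continuous there, hence constant, hence extends, and the stated equivalence then gives the vanishing. No differences worth noting.
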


As in the case of $\cb^2$, we get for extendable  currents
\begin{prop}\label{curproj}
Suppose that  $H^{0,1}_{\ol\hb^-,cur}(\cx\mathbb P^2)=0$.  Then any
holomorphic function on $\hb^+$, which is extendable in the sense of
currents, is constant.
\end{prop}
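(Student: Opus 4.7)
The plan is to invoke Proposition \ref{CNcur} directly, specialized to $X=\cx\mathbb P^2$ and $\Omega=\hb^-$, and then to exploit the compactness of $\cx\mathbb P^2$. Concretely, let $f\in\oc(\hb^+)$ be extendable in the sense of currents, so that there exists a distribution $S_f\in\dc'(\cx\mathbb P^2)$ whose restriction to $\hb^+=\Omega^c$ equals $f$. Since we are assuming $H^{0,1}_{\ol\hb^-,cur}(\cx\mathbb P^2)=0$, the proof of Proposition \ref{CNcur} shows that $f$ admits a holomorphic extension $\wt f$ to all of $\cx\mathbb P^2$.

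The key observation is then that $\cx\mathbb P^2$ is a compact connected complex manifold, so by the maximum principle every global holomorphic function on $\cx\mathbb P^2$ is constant. Hence $\wt f$ is constant, and restricting back to $\hb^+$ we conclude that $f$ itself is constant.

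There is essentially no obstacle beyond checking that the hypotheses of Proposition \ref{CNcur} apply in the projective setting: the proposition was stated for arbitrary complex manifolds $X$ and domains $\Omega\subset X$ with no additional assumption (in particular no noncompactness of $X$ was required), so the reduction is immediate. The statement can therefore be read as an obstruction result: in contrast to the situation in $\cb^2$, where solving $\dbar$ with prescribed support in $\ol\hb^-$ is equivalent to a nontrivial extension phenomenon across $\hb^+$, the compactness of $\cx\mathbb P^2$ makes the analogous vanishing incompatible with the existence of any non-constant holomorphic function on $\hb^+$ that is currentially extendable. This sets up the contradiction to be exploited in establishing $H^{0,1}_{\ol\hb^-,cur}(\cx\mathbb P^2)\neq 0$ in the sequel.
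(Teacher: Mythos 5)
Your proof is correct and is essentially the paper's (implicit) argument: the paper derives this proposition exactly as in the $\cb^2$ case by applying Proposition \ref{CNcur} with $X=\cx\mathbb P^2$, $\Omega=\hb^-$, and then using that a global holomorphic function on the compact connected manifold $\cx\mathbb P^2$ is constant. Your check that Proposition \ref{CNcur} needs no noncompactness hypothesis is the right point to verify, and nothing further is needed.
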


\begin{thm}
 $H^{0,1}_{\ol\hb^-,cur}(\cx\mathbb P^2)$ does not vanish and  is Hausdorff.
\end{thm}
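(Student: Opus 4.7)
The theorem has two parts, non-vanishing and Hausdorffness. Both rely on the compact K\"ahler fact $H^{0,1}_{cur}(\cx\mathbb P^2)=0$, and on the observation that every global holomorphic function on $\cx\mathbb P^2$ is constant.

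For non-vanishing, my plan is to apply the contrapositive of Proposition~\ref{curproj}: it is enough to exhibit a non-constant holomorphic function on $\hb^+$ that extends as a distribution to $\cx\mathbb P^2$. I will take $f=z_1/z_2$. On $\hb^+$ one has $z_2\neq 0$ and $|z_1|<|z_2|$, so $f$ is bounded holomorphic with $|f|<1$; extending it by zero across $\partial\hb^+$ produces an $L^\infty$ function on $\cx\mathbb P^2$, hence a distribution. Since $f$ is plainly non-constant on $\hb^+$, Proposition~\ref{curproj} yields $H^{0,1}_{\ol\hb^-,cur}(\cx\mathbb P^2)\neq 0$.

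For Hausdorffness, my plan is to show directly that the image of $\dbar:\dc'(\cx\mathbb P^2,\ol\hb^-)\to\dc'_{0,1}(\cx\mathbb P^2,\ol\hb^-)$ is closed. Because $H^{0,1}_{cur}(\cx\mathbb P^2)=0$, every $\dbar$-closed $(0,1)$-current $T$ on $\cx\mathbb P^2$ equals $\dbar\tilde S$ for some distribution $\tilde S$ unique modulo a constant; if $\supp T\subset\ol\hb^-$, then $\tilde S|_{\hb^+}$ is holomorphic, giving a well-defined linear map
\[
\Phi\colon \bigl\{T\in\dc'_{0,1}(\cx\mathbb P^2,\ol\hb^-)\,:\,\dbar T=0\bigr\}\longrightarrow\oc(\hb^+)/\cx,\qquad \Phi(T)=[\tilde S|_{\hb^+}].
\]
Arguing as in the proof of Proposition~\ref{curproj}, one has $T\in\dbar\bigl(\dc'(\cx\mathbb P^2,\ol\hb^-)\bigr)$ if and only if $\Phi(T)=0$, so $\operatorname{im}\dbar=\ker\Phi$. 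To see that $\Phi$ is continuous, I will apply the open mapping theorem to the continuous linear bijection
\[
\dbar\colon\dc'(\cx\mathbb P^2)/\cx\ \longrightarrow\ \ker\dbar\subset\dc'_{0,1}(\cx\mathbb P^2),
\]
which is valid because both sides are DF-spaces, being strong duals of nuclear Fr\'echet spaces. Composing its continuous inverse with restriction to $\hb^+$ and projection modulo constants exhibits $\Phi$ as continuous; since $\oc(\hb^+)$ is Fr\'echet and $\cx$ is a closed finite-dimensional subspace, the target $\oc(\hb^+)/\cx$ is Hausdorff. Hence $\ker\Phi=\operatorname{im}\dbar$ is closed, and $H^{0,1}_{\ol\hb^-,cur}(\cx\mathbb P^2)$ is Hausdorff.

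The main technical point I would have to verify carefully is the open mapping step in the distributional category, i.e.\ that the spaces of currents on the compact manifold $\cx\mathbb P^2$ are indeed of the right sort for Grothendieck's open mapping theorem to apply. Once this is granted, the argument is completely formal and does not depend on the specific geometry of the Hartogs triangle beyond the vanishing $H^{0,1}_{cur}(\cx\mathbb P^2)=0$.
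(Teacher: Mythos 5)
Your proof of the non-vanishing is the same as the paper's: both exhibit the bounded, non-constant holomorphic function $z_1/z_2$ on $\hb^+$, note that boundedness makes it extendable as a current (the paper does not even bother with the explicit extension by zero), and invoke Proposition \ref{curproj}. For the Hausdorff part, however, you take a genuinely different route. The paper argues by Serre duality: it reduces the Hausdorff property of $H^{0,1}_{\ol\hb^-,cur}(\cx\mathbb P^2)$ to the vanishing of $H^{2,2}_\infty(\ol\hb^-)$, which it then obtains by extending a smooth $(2,2)$-form to a connected non-compact neighbourhood $U$ of $\ol\hb^-$ and applying Malgrange's theorem in top degree. You instead prove closedness of the range of $\opa$ directly, by realizing it as the kernel of the continuous map $T\mapsto[\tilde S|_{\hb^+}]$ into the Hausdorff space $\oc(\hb^+)/\cx$, the continuity coming from the open mapping theorem applied to $\opa\colon\dc'(\cx\mathbb P^2)/\cx\to\ke\opa$. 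Your functional-analytic step is legitimate: on the compact manifold $\cx\mathbb P^2$ the spaces of currents are strong duals of nuclear Fr\'echet spaces, hence webbed and ultrabornological, and these properties pass to quotients by closed subspaces and to closed subspaces, so De Wilde's open mapping theorem applies; also, for the kernel of $\Phi$ to be closed you only need the target Hausdorff, which holds because $\cx$ is finite-dimensional, so you can even take $\dc'(\hb^+)/\cx$ as the target and sidestep any comparison of topologies on $\oc(\hb^+)$. What each approach buys: the paper's duality argument stays within the framework it uses throughout (and the vanishing $H^{2,2}_\infty(\ol\hb^-)=0$ is of independent interest), while yours is more elementary in that it avoids the duality theorem for cohomology with prescribed support, uses nothing about the geometry of $\hb^-$, and in fact shows that $H^{0,1}_{\ol\Omega,cur}(X)$ is Hausdorff for \emph{every} domain $\Omega$ in a connected compact complex manifold $X$ with $H^{0,1}(X)=0$.
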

\begin{proof}
Let us consider the function $h$ defined on the open subset $\hb^+$ of $\cx\mathbb P^2$
by $h([z_0:z_1:z_2])=\frac{z_1}{z_2}$. It is holomorphic and bounded
and hence defines an extendable
current, but it is not constant, so by Proposition \ref{curproj}, we
get $H^{0,1}_{\ol\hb^-,cur}(\cx\mathbb P^2)\neq 0$.
  By the  Serre duality, to prove that $H^{0,1}_{\ol\hb^-,cur}(\cx\mathbb P^2)$ is Hausdorff, it is sufficient to prove that $H^{2,2}_\infty(\ol\hb^-)=0$.

  Let $f$ be a smooth $(2,2)$-form on $\ol\hb^-$ and $U$ be a neighborhood of $\ol\hb^-$, we can choose $U$ such that $\ol U$ is a connected proper subset of $\cx\mathbb P^2$. Then $f$ extends as a smooth $(2,2)$-form on $U$,  called $\wt f$. By Malgrange's theorem,  the top degree Dolbeault cohomology group $H^{2,2}(U)$ vanishes since $U$ is a non compact connected complex manifold. Thus  there exists a smooth $(2,1)$-form $u$ on $U$ such that $\opa u=\wt f$ on $U$. Then $v=u_{|_{\ol\hb^-}}$is a smooth form on $\ol\hb^-$ which satisfies $\opa v=f$ on $\hb^-$.
\end{proof}

Let us now consider the $L^2$ Dolbeault cohomology  with prescribed support  in an Hartogs triangle in $\cx\mathbb P^2$.
As usual we endow $\hb^+$ with the restriction of the  Fubini-Study metric of $\cx \mathbb{P}^2$.
  The following   proposition is already proved in Proposition 6 in  \cite{CS2012}.

 \begin{prop}\label{holoL2}
Let $\hb^+\subset\cx\mathbb P^2$ be the Hartogs' triangle. 
Then we have the following:
\begin{enumerate}
\item The Bergman space of $L^2$ holomorphic functions   $L^2(\hb^+)\cap\mathcal{O}(\hb^+)$
on the domain $\hb^+$ separates points in $\hb^+$.

\item There exist nonconstant  functions in the space $W^1(\hb^+)\cap\mathcal{O}(\hb^+)$.
However, this space does not separate points in $\hb^+$ and  is not dense in the Bergman space $L^2(\hb^+)\cap \mathcal{O}(\hb^+)$.
\item  Let $f\in W^2(\hb^+)\cap \mathcal{O}(\hb^+)$ be a holomorphic function on $\hb^+$ which is in the
Sobolev space $ W^2(\hb^+)$. Then $f$ is a constant.
  \end{enumerate}
\end{prop}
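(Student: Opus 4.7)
\textit{Proof proposal.} The natural approach is to exploit the biholomorphism $\hb^+\cong\cb\times\Delta$ given by the affine coordinates $(\zeta_0,\zeta_1)=(z_0/z_2,z_1/z_2)$, which are well defined on all of $\hb^+$ since $|z_1|<|z_2|$ forces $z_2\neq 0$. In these coordinates the Fubini-Study volume form has density proportional to $(1+|\zeta_0|^2+|\zeta_1|^2)^{-3}$; since $|\zeta_1|<1$ is bounded on $\hb^+$, this weight is comparable to $(1+|\zeta_0|^2)^{-3}$. A holomorphic function on $\hb^+$ admits a Taylor expansion $f=\sum_{j,k\geq 0}a_{jk}\zeta_0^j\zeta_1^k$ converging on $\cb\times\Delta$. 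Because the FS measure, the FS metric, and hence each Sobolev inner product, are all invariant under the torus action $(\zeta_0,\zeta_1)\mapsto(e^{i\theta_0}\zeta_0,e^{i\theta_1}\zeta_1)$, the monomials $\zeta_0^j\zeta_1^k$ are mutually orthogonal in $L^2_{FS}$, $W^1$, and $W^2$. Consequently each Sobolev space of holomorphic functions is determined by specifying exactly which monomials have finite norm.

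For (1), a direct computation yields $\|\zeta_0^j\zeta_1^k\|_{L^2_{FS}}^2<\infty$ if and only if $j\in\{0,1\}$. Hence both $\zeta_0$ and $\zeta_1$ belong to the Bergman space, and since $(\zeta_0,\zeta_1)$ form global coordinates on $\hb^+$, they separate points. For (2), using the standard FS identity $g^{\bar\beta\alpha}\sim(1+|\zeta|^2)$, the integrand for the $W^1$ seminorm has weight $(1+|\zeta|^2)^{-2}$ (after cancellation with $dV_{FS}$), and one finds $\|\zeta_0^j\zeta_1^k\|_{W^1_{FS}}^2<\infty$ only when $j=0$. Thus $W^1(\hb^+)\cap\oc(\hb^+)$ consists exactly of series $\sum_k a_k\zeta_1^k$ depending only on $\zeta_1$; the function $\zeta_1$ itself is a nonconstant example, any two points with the same $\zeta_1$-coordinate but distinct $\zeta_0$ are indistinguishable, and $\zeta_0$ lies in the Bergman space but is $L^2$-orthogonal to every element of $W^1\cap\oc$, so density fails as well.

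For (3) the key idea is to switch to the chart $z_0=1$ with coordinates $(z_1,z_2)$, in which $\hb^+$ becomes the unbounded Hartogs triangle $\{|z_1|<|z_2|\}$ around the corner point $(0,0)$ corresponding to $[1:0:0]\in\ol{\hb^+}$. Near this corner the FS metric is uniformly equivalent to the Euclidean metric, so $W^2$-membership is a local Euclidean condition there. A model computation shows, for instance, that $\partial_{z_1}\partial_{z_2}(z_1/z_2)=-1/z_2^2$ has $L^2$-norm over $\{|z_1|<|z_2|<\epsilon\}$ equal to a logarithmically divergent integral in polar coordinates $(r_1,r_2)$, so $\zeta_1\notin W^2$. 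Running the systematic orthogonal analysis for every monomial (combining contributions near $\zeta_0=\infty$ in the $z_2=1$ chart with contributions near the corner in the $z_0=1$ chart) shows that only constants survive, proving (3).

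The main obstacle is (3): book-keeping the FS weights on second covariant derivatives, and in particular verifying that for \emph{every} nonconstant monomial at least one component of the Hessian produces a divergent integral, either at infinity in one affine chart or at the non-Lipschitz corner point $[1:0:0]$ in the other. The two charts together cover $\hb^+$, and the argument must ensure that no cancellation rescues any nonconstant combination, which is what ultimately makes $W^2\cap\oc(\hb^+)$ collapse to constants.
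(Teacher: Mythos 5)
The paper itself does not prove this proposition: it is quoted from Proposition 6 of \cite{CS2012}, and your strategy is exactly the kind of computation carried out there, namely identifying $\hb^+$ with the Reinhardt domain $\cb\times\Delta$ via $(\zeta_0,\zeta_1)=(z_0/z_2,z_1/z_2)$, using the isometric torus action to reduce to monomials $\zeta_0^j\zeta_1^k$, and testing integrability against Fubini--Study weights chart by chart. Your item (1) is correct ($j\le 1$ gives exactly the $L^2$ monomials, and $\zeta_0,\zeta_1$ separate points), and your model computation for (3) at the corner $[1:0:0]$, where the FS metric is uniformly equivalent to the Euclidean one, is also correct.

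The genuine flaw is in the $W^1$ step, which is the heart of item (2). The inverse FS metric in the affine chart is $g^{\bar\beta\alpha}=(1+|\zeta|^2)(\delta_{\alpha\beta}+\zeta_\alpha\bar\zeta_\beta)$, and it is \emph{not} uniformly comparable to $(1+|\zeta|^2)$: in the radial direction (along $\zeta_0$ for $|\zeta_0|$ large, which is precisely the derivative that matters when $j\ge 1$) it has size $(1+|\zeta|^2)^2$. If one really uses your weight $(1+|\zeta|^2)^{-2}$, then for $f=\zeta_0$ the gradient integrand is $\asymp(1+|\zeta_0|^2)^{-2}$, which \emph{is} integrable, and one would wrongly conclude $\zeta_0\in W^1(\hb^+)$; that would make $W^1(\hb^+)\cap\oc(\hb^+)$ separate points, contradicting the very statement being proved. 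With the correct radial factor the integrand for $\zeta_0$ is $\asymp(1+|\zeta_0|^2)^{-1}$, which diverges logarithmically, and one recovers the correct characterization $j=0$. Equivalently, and more simply (consistent with what you do for (3)), run the $W^1$ computation in the $z_0=1$ chart near the corner: there $\zeta_0^j\zeta_1^k=z^k/w^{j+k}$ and $\int_{\{|z|<|w|<\epsilon\}}|\partial_w(z^k/w^{j+k})|^2\,dV\asymp\int_0^\epsilon r^{1-2j}\,dr$, divergent exactly when $j\ge1$, while for $j=0$ all first derivatives are square integrable; then $W^2$ fails for every $\zeta_1^k$, $k\ge 1$, as you computed. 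One smaller point to make explicit: passing from ``which monomials have finite norm'' to conclusions about an arbitrary $f\in W^s\cap\oc(\hb^+)$ requires that the Fourier-mode projections $f\mapsto a_{jk}\zeta_0^j\zeta_1^k$ be bounded on $W^s$; this follows from the isometric torus action via Minkowski's integral inequality, and stating it is what rules out the ``cancellation'' scenario you worry about at the end, so that only constants survive in $W^2$ and non-density in (2) follows from $\langle f,\zeta_0\rangle_{L^2}=0$ for all $f\in W^1\cap\oc(\hb^+)$.
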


\begin{prop}\label{extW1}
Let $\hb^+\subset\cx\mathbb P^2$ be the Hartogs' triangle.   Any function $f\in  W^1(\hb^+)\cap\mathcal{O}(\hb^+)$ can be extended to a function in $W^1(\cx\mathbb{P}^2)$.
\end{prop}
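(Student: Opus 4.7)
The plan is first to characterize the space $W^1(\hb^+)\cap\oc(\hb^+)$ explicitly, then to construct the extension as the pullback of a compactly supported Sobolev function on $\cx$ via the rational map $\pi\colon[z_0:z_1:z_2]\mapsto z_1/z_2$. Throughout I work in the affine chart $\{z_2\neq 0\}$ with coordinates $(u,v)=(z_0/z_2,z_1/z_2)$, in which $\hb^+$ becomes the product $\mathbb{D}_v\times\cx_u$, the Fubini--Study volume form is a bounded multiple of $(1+|u|^2+|v|^2)^{-3}\,dV_{\mathrm{Eucl}}$, and for holomorphic $f$ one has
$$|\partial f|^2_{FS}=(1+|u|^2+|v|^2)\bigl(|\partial_u f|^2+|\partial_v f|^2+|\bar u\,\partial_u f+\bar v\,\partial_v f|^2\bigr).$$

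\medskip

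For the characterization, every $f\in\oc(\hb^+)$ admits a Taylor expansion $f(u,v)=\sum_{k\ge 0}a_k(v)u^k$ with $a_k\in\oc(\mathbb{D})$, converging locally uniformly on $\mathbb{D}_v\times\cx_u$. Since the monomials $u^k$ are pairwise orthogonal under the rotation-invariant measure $(1+|u|^2+|v|^2)^{-3}dA(u)$, and since the radial integrals $\int_0^\infty r^{2k+1}(1+r^2)^{-3}\,dr$ diverge for all $k\ge 2$, the $L^2(\hb^+,\mathrm{FS})$ hypothesis alone forces $a_k\equiv 0$ for $k\ge 2$. For the remaining expression $f(u,v)=a_0(v)+a_1(v)u$, the integrand $|\partial f|^2_{FS}\,dV_{FS}$ contains the term $|u|^2|a_1(v)|^2(1+|u|^2+|v|^2)^{-2}$ coming from $|\bar u\,a_1|^2$ above, whose integral over $\mathbb{D}_v\times\cx_u$ diverges at infinity like $\int_{|u|>1}|u|^{-2}\,dA(u)$ unless $a_1\equiv 0$. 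The remaining condition on $a_0$ amounts exactly to $a_0\in W^1(\mathbb{D})\cap\oc(\mathbb{D})$, so every $f\in W^1(\hb^+)\cap\oc(\hb^+)$ has the form $f=\pi^*a_0$ with $a_0\in W^1(\mathbb{D})\cap\oc(\mathbb{D})$.

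\medskip

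To extend, apply Stein's extension theorem to the smooth domain $\mathbb{D}$ to obtain $\tilde a_0\in W^1(\cx)$ restricting to $a_0$ on $\mathbb{D}$, then multiply by a smooth cutoff equal to $1$ on $\ol{\mathbb{D}}$ and supported in $\{|v|\le R\}$ for some $R>1$; we may assume $\tilde a_0$ has compact support in $\cx_v$. Define
$$F([z_0:z_1:z_2])=\tilde a_0(z_1/z_2)\text{ on }\{z_2\neq 0\},\qquad F\equiv 0\text{ on }\{z_2=0\}.$$
Since $\tilde a_0$ vanishes wherever $|z_1/z_2|>R$, this is unambiguous off the point $[1:0:0]$ and clearly satisfies $F|_{\hb^+}=f$.

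\medskip

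The main obstacle is verifying $F\in W^1(\cx\mathbb{P}^2)$ near $[1:0:0]$. In the chart $\{z_0\neq 0\}$ with coordinates $(z,w)=(z_1/z_0,z_2/z_0)$ this point is the origin, where $F(z,w)=\tilde a_0(z/w)$ is genuinely discontinuous. The Fubini--Study metric being equivalent to the Euclidean one near the origin, it suffices to estimate the Euclidean $W^1$ norm on a ball $\{|z|<1,\,|w|<\epsilon\}$. The weak derivatives are $\partial_zF=\tilde a_0'(z/w)/w$ and $\partial_wF=-\tilde a_0'(z/w)\,z/w^2$; for each fixed $w\neq 0$, the change of variable $v=z/w$ has Jacobian $dA(z)=|w|^2\,dA(v)$, so the factors of $|w|^{-2}$ in $|\partial_zF|^2$ and $|\partial_wF|^2$ are cancelled exactly by $|w|^2$, while the extra factor $|v|^2$ arising in $|\partial_wF|^2$ is bounded by $R^2$ on $\mathrm{supp}\,\tilde a_0$. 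Each of $\int|F|^2$, $\int|\partial_zF|^2$, $\int|\partial_wF|^2$ thus reduces on the ball to a finite multiple of $\pi\epsilon^2\|\tilde a_0\|_{W^1(\cx)}^2$. Away from $[1:0:0]$, the function $F$ either vanishes in an open neighborhood---at all points of $\{z_2=0\}\setminus\{[1:0:0]\}$ and at $[0:1:0]$ the ratio $z_1/z_2$ (respectively $1/q$ in the chart $\{z_1\neq 0\}$) tends to infinity outside $\mathrm{supp}\,\tilde a_0$---or coincides with the chart-$\{z_2\neq 0\}$ expression $\tilde a_0(v)$, whose Fubini--Study $W^1$ norm is bounded by a direct computation by $C\|\tilde a_0\|_{W^1(\cx)}$. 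Hence $F\in W^1(\cx\mathbb{P}^2)$, completing the proof.
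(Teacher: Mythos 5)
Your proposal is correct, and it reaches the same key estimate as the paper by a partly different and more self-contained route. The paper quotes Proposition 6 of \cite{CS2012} to assert that the elements of $W^1(\hb^+)\cap\oc(\hb^+)$ are the functions $f_k=(z_1/z_2)^k$, reduces to extending each $f_k$ separately, and extends it by the explicit cutoff $\chi\bigl(1+\tfrac13(1-|z|^2/|w|^2)\bigr)f_k$, using that $f_k$ is bounded on $\{|z|\le 2|w|\}$ and that the cutoff's gradient is $O(1/|w|)$, hence in $L^2$. You instead prove the structural fact from scratch in the chart $\{z_2\neq0\}$ (where $\hb^+\cong\mathbb{D}\times\cx$): the Fubini--Study weight kills all powers $u^k$, $k\ge 2$, in $L^2$ and the gradient condition kills $k=1$, so every $f\in W^1(\hb^+)\cap\oc(\hb^+)$ is $a_0(z_1/z_2)$ with $a_0\in W^1(\mathbb{D})\cap\oc(\mathbb{D})$; then you extend $a_0$ by Stein extension plus a cutoff in the $v$-variable and pull back. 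The extension estimate near the indeterminacy point $[1:0:0]$ (gradient $\lesssim|\nabla\tilde a_0(z/w)|/|w|$, absorbed by the Jacobian $|w|^2$ of $v=z/w$) is the same mechanism as the paper's. What your route buys: it treats a general $f$ at once rather than the monomials $f_k$ (the paper's ``it suffices to prove it for each $f_k$'' is only a reduction once one knows how general elements are built from the $f_k$), it yields a bounded linear extension operator, and it does not use boundedness of the function being extended, which the paper's remark flags as an essential ingredient of its own argument.

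Two small repairs you should make. First, the Fubini--Study gradient identity should read $|\partial f|^2_{FS}=(1+|u|^2+|v|^2)\bigl(|\partial_u f|^2+|\partial_v f|^2+|u\,\partial_u f+v\,\partial_v f|^2\bigr)$, without the conjugates on $u,v$; also, when you discard $a_1$, note that the cross terms cannot cancel the $|u|^2|a_1|^2$-type contribution because terms holomorphic and antiholomorphic in $u$ are orthogonal on circles $|u|=r$ (in fact the $|\partial_v f|^2$ term first forces $a_1'\equiv0$ and then the mixed term forces $a_1\equiv0$). The conclusions are unaffected. Second, since $\tilde a_0$ is only $W^1$, write the chain rule with $\partial_v\tilde a_0$ and $\partial_{\bar v}\tilde a_0$ rather than $\tilde a_0'$, and justify that the a.e.\ derivatives you compute are the distributional ones on all of $\cx\mathbb{P}^2$: $F$ vanishes identically near $\{z_2=0\}\setminus\{[1:0:0]\}$, is $W^1_{\mathrm{loc}}$ on $\{z_2\neq0\}$ as a composition with a holomorphic submersion, and the single point $[1:0:0]$ has zero $W^1$-capacity in real dimension four, so finiteness of the $W^1$ norm on the punctured ball suffices. (The paper's proof glosses over the same point.)
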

\begin{proof} In the non-homogeneous holomorphic coordinates $(z,w)$ for $\Bbb H^+$,  any function $f\in  W^1(\hb^+)\cap\mathcal{O}(\hb^+)$ has the form (see Proposition 6 in \cite{CS2012})
$$f_k(z,w)= \left( \frac zw\right)^k,  \qquad k\in \mathbb N.$$
 It suffices to prove the proposition for each $f_k(z,w)$.

Let $\chi(t)\in C^\infty(\mathbb R) $ be a function defined by $\chi(t)=0$ if $t\le 0$ and $\chi(t)=1$ if $t\ge 1$.
Let $\tilde f_k$ be the function defined by
\begin{equation} \tilde f_k(z,w)= \chi\left(1+\frac 13(1- \frac {|z|^2 }{|w|^2 })\right)f_k (z,w).\end{equation}
On  $|z|<|w|$, it is easy to see that $\tilde f_k=f_k$. Thus $\tilde f_k$ is an extension of $f_k$ to $\cx\mathbb{P}^2$.

To see that $\tilde f_k$ is in $W^1(\cx\mathbb{P}^2)$,  we first note that the function
 $$\chi\left(1+\frac 13(1- \frac {|z|^2 }{|w|^2 })\right)=0$$ when restricted to $\{|z|\ge  2|w|\}$.
  Thus it is   supported in $\{|z|\le 2|w|\}$. On its support, the function $\frac {|z|}{|w|}$ is bounded. Using this fact and  differentiating under the  chain rule, we have  that
\begin{equation}\label{deriv}
|\nabla \chi\left(1+\frac 13(1- \frac {|z|^2 }{|w|^2 })\right)|\le  C( \sup|\chi'|) \frac 1{|w|}\le C\frac 1{|w|}.\end{equation}
 Repeating the arguments as before,  we see that the function $\frac 1{|w|}$ is in  $L^2$ on $\{|z|\le 2|w|\}$.
   Since the function $f_k$ is bounded on the set $\{|z|\le 2|w|\}$,  we conclude from \eqref{deriv}  that the derivatives of  $\tilde f_k$ is in   $L^2(\cx\mathbb{P}^2)$. Thus $\tilde  f_k$ is an extension in $W^1(\cx\mathbb{P}^2)$ of $f_k$.

 \end{proof}

 {\parindent=0pt{\bf Remark.} Suppose $D$ is a bounded  domain  with Lipschitz boundary, then any function $f\in W^1(D)$ extends as a
 function in $W^1(\cx\mathbb P^2)$. It is not known if this is true for the Hartogs triangle $\hb^+$. In the proof of Proposition \ref{extW1}, we have used the fact that the function  $f_k$ are in $W^1(\hb^+)$ and  {\it bounded} on $\hb^+$.}

 \begin{thm}\label{infinite}
Let $\hb^-\subset\cx\mathbb P^2$ be the Hartogs' triangle.   Then the cohomology group
 $H^{0,1}_{\ol\hb^-, L^2}(\cx\mathbb P^2)\neq 0$  and is infinite dimensional.
 \end{thm}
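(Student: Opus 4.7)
The plan is to exhibit an infinite family of linearly independent classes in $H^{0,1}_{\ol\hb^-,L^2}(\cx\mathbb P^2)$ using the nonconstant holomorphic functions $f_k(z,w)=(z/w)^k$, $k\geq 1$, on $\hb^+$ and their $W^1$ extensions produced by Proposition~\ref{extW1}.

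First, for each $k\geq 1$ let $\wt f_k\in W^1(\cx\mathbb P^2)$ be the extension of $f_k$ provided by Proposition~\ref{extW1}, and set $\omega_k:=\opa\wt f_k$. Then $\omega_k$ is an $L^2$ $(0,1)$-form on $\cx\mathbb P^2$, obviously $\opa$-closed, and since $\wt f_k=f_k$ is holomorphic on $\hb^+$, the form $\omega_k$ vanishes on $\hb^+=\cx\mathbb P^2\setminus\ol\hb^-$; thus $\supp\omega_k\subset\ol\hb^-$, so $\omega_k$ represents a class $[\omega_k]\in H^{0,1}_{\ol\hb^-,L^2}(\cx\mathbb P^2)$.

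Next I would show that the classes $\{[\omega_k]\}_{k\geq 1}$ are linearly independent. Suppose $\sum_{k=1}^N c_k[\omega_k]=0$. Then there exists $u\in L^2(\cx\mathbb P^2)$ with $\supp u\subset\ol\hb^-$ such that $\opa u=\sum_{k=1}^N c_k\omega_k=\opa\bigl(\sum_{k=1}^N c_k\wt f_k\bigr)$. Setting $F:=\sum_{k=1}^N c_k\wt f_k-u$ gives a distribution on $\cx\mathbb P^2$ with $\opa F=0$, so by elliptic regularity $F$ is holomorphic on the compact connected manifold $\cx\mathbb P^2$, hence a constant $C$. Restricting to $\hb^+$, where $u=0$ (since $\supp u\subset\ol\hb^-$ and $\cx\mathbb P^2\setminus\ol\hb^-=\hb^+$), we obtain
$$\sum_{k=1}^N c_k\left(\frac{z}{w}\right)^k=C \quad\text{on }\hb^+.$$
Since $z/w$ takes all values in the open unit disc on $\hb^+$, a polynomial identity in $z/w$ forces $c_1=\cdots=c_N=0$, and the independence follows.

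Consequently $H^{0,1}_{\ol\hb^-,L^2}(\cx\mathbb P^2)$ is infinite dimensional and in particular nonzero. The main technical point, already absorbed into the statement of Proposition~\ref{extW1}, is the construction of the $W^1$ extensions $\wt f_k$; the rest of the argument is of the same flavor as the reasoning used for smooth/$\cc^k$/currents in the previous results, namely the interplay between prescribed support and the vanishing $H^{0,1}(\cx\mathbb P^2)=0$, applied through elliptic regularity to deduce holomorphicity, and then using compactness of $\cx\mathbb P^2$ to force $F$ to be a constant. The potential subtlety is ensuring that $u\in L^2$ with $\supp u\subset\ol\hb^-$ genuinely vanishes on $\hb^+$ as a distribution, but this is immediate from the definition of support and the fact that $\hb^+$ is open and disjoint from $\ol\hb^-$.
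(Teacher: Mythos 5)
Your proposal is correct and follows essentially the same route as the paper: take the $W^1(\cx\mathbb P^2)$ extensions $\wt f_k$ of $f_k=(z/w)^k$ from Proposition~\ref{extW1}, use $\opa\wt f_k$ as $L^2$ representatives supported in $\ol\hb^-$, and show linear independence by noting that any relation would produce a global holomorphic (hence constant) function on $\cx\mathbb P^2$ restricting to a nonconstant polynomial in $z/w$ on $\hb^+$. The only cosmetic difference is that you derive non-vanishing as a byproduct of linear independence, whereas the paper argues non-vanishing first and then independence by the same mechanism.
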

 \begin{proof} We recall that
  $\hb^+= \cx\mathbb{P}^2\setminus \overline \hb^-.$
 From Proposition \ref{holoL2},  the space of holomorphic functions in $W^1(\hb^+)\cap\mathcal{O}(\hb^+)$ is infinite dimensional.  In the non-homogeneous coordinates, consider the holomorphic functions of the type $f_k=( \frac zw)^k$, $k\in \mathbb N$.

We define the operator $\opa_{\tilde c}$ as the weak minimal realization of $\opa$, then the domain of $\opa_{\tilde c}$ is the space of $L^2$ forms $f$ in $\cx\mathbb P^2$ with support in $\ol\hb^-$ such that $\opa f$ is also an $L^2$ form in $\cx\mathbb P^2$.

 Using Proposition \ref{extW1}, each holomorphic function $f_k$ can be extended to a function $\tilde f_k\in W^1(\cx \mathbb P^2)$.
 Suppose that  $H^{0,1}_{\ol\hb^-, L^2}(\cx\mathbb P^2)=0$. Then we can solve
  $\bar \partial_{\tilde c} u_k= \dbar \tilde f_k$
 in $\cx\mathbb P^2$ with prescribed support for $u_k$  in $\overline \hb^-$.  Let
  $H_k=\tilde f_k-u_k.$
 Then $H_k$ is a  holomorphic function in $\cx\mathbb P^2$, hence  a constant. But $H_k=f_k$ on $\hb^+$, a contradiction.
 This implies that the space $H^{0,1}_{\ol\hb^-, L^2}(\cx\mathbb P^2)$  is non-trivial.

 Next we prove that  $H^{0,1}_{\ol\hb^-, L^2}(\cx\mathbb P^2)$ is infinite dimensional.     Each function $\tilde f_k$ corresponds to a
 (0,1)-form
  $ \dbar \tilde f_k$. We set  $g_k=\dbar \tilde f_k$. Then  $g_k$  is in $\text{Dom}(\dbar_{\tilde c})$ and satisfies
   $\dbar_{\tilde c} g_k=0.$  Thus it  induces an element  $[g_k]$  in $H^{0,1}_{\ol\hb^-, L^2}(\cx\mathbb P^2)$. To see that $[g_k]$'s are linearly independent, let $N>1$ be a positive integer  and  $F_N=\sum_{k=1}^{N} c_k  f_k$, where $c_k$ are constants.  Set
  $G_N=\sum_{k=1}^{N} c_k  g_k.$
  Suppose that
  $[G_N]=0$, then we can solve $\dbar_{\tilde c} u= G_N$ and    the  function $F_N$ holomorphic in $\hb^+$ extends holomorphically to $\cx\mathbb P^2$.  Thus $F_N$ must be a constant and
   $c_1=\cdots=c_N=0.$  Thus $[g_k]$'s are linearly independent.
This proves that   $H^{0,1}_{\ol\hb^-, L^2}(\cx\mathbb P^2)$ is infinite dimensional.
 \end{proof}

{\parindent=0pt{\bf Remark.} It follows from Proposition \ref{injective} and Theorem \ref{infinite} that $H^{0,1}_{\ol\hb^-, cur}(\cx\mathbb P^2)$ is also infinite dimensional.}

 \begin{lem}\label{range}
 The  range of the strong $L^2$ closure of $\opa$
 \begin{equation}\dbar_s:L^2_{2,1}(\hb^-)\to L^2_{2,2}(\hb^-) \end{equation}
  is closed and  equal to $L^2_{2,2}(\hb^-)$.
 \end{lem}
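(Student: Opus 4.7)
The plan is to construct an explicit bounded right inverse $S:L^2_{2,2}(\hb^-)\to L^2_{2,1}(\hb^-)$ with image in $\Dom(\dbar_s)$ and $\dbar_s\circ S=\id$; this immediately gives surjectivity, from which closedness of the range is automatic since the image then is the full Hilbert space $L^2_{2,2}(\hb^-)$.

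I would work in the affine chart $\{z_1\ne 0\}$ with coordinates $u=z_0/z_1$, $v=z_2/z_1$, in which $\hb^-=\cb_u\times\Delta_v$ and the only boundary point of $\ol\hb^-$ outside this chart is the cone-singularity $[1:0:0]$, corresponding to $u=\infty$. Since $|v|<1$ on $\hb^-$, the Fubini--Study volume form is comparable there to $(1+|u|^2)^{-3}\,du\,d\bar u\,dv\,d\bar v$, and the pointwise FS-norms of the relevant basis $(2,1)$- and $(2,2)$-forms are controlled by equivalent powers of $1+|u|^2$ on $\hb^-$. For $f=g\,du\wedge dv\wedge d\bar u\wedge d\bar v\in L^2_{2,2}(\hb^-)$, I define
\[
Sf:=\pm\,\frac{1}{\pi}\biggl(\int_\Delta \frac{g(u,\zeta)}{\zeta-v}\,dA(\zeta)\biggr)\,du\wedge dv\wedge d\bar u,
\]
the sign being chosen so that the Cauchy--Pompeiu formula yields $\dbar(Sf)=f$ distributionally. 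The key estimate $\|Sf\|_{L^2_{2,1}(\hb^-)}\le C\|f\|_{L^2_{2,2}(\hb^-)}$ follows from the $L^2(\Delta)\to L^2(\Delta)$-boundedness of the Cauchy transform (via Schur's test on the kernel $1/(\zeta-v)$), applied fiberwise in $v$ and integrated against the FS weight in $u$, which is $v$-independent on $\hb^-$ up to bounded factors.

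To place $Sf$ in $\Dom(\dbar_s)$, I approximate $f$ in $L^2$ by forms $f_n\in\ci_{2,2}(\ol\hb^-)$ whose coefficients have compact support in the $u$-variable, obtained by cutting off with $\chi(u/R)$ and mollifying. Then the coefficient of $Sf_n$ also vanishes for $|u|$ large; passing to the chart $\{z_0\ne 0\}$ via $z=1/u$, $w=v/u$, the form $Sf_n$ is identically zero in a neighborhood of $[1:0:0]=(z,w)=(0,0)$, so $Sf_n\in\ci_{2,1}(\ol\hb^-)$. The $L^2$-bound then gives $(Sf_n,f_n)\to(Sf,f)$ in the graph norm, placing $Sf$ in the strong closure $\Dom(\dbar_s)$ with $\dbar_s(Sf)=f$. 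The main obstacle lies in the $L^2$-estimate: one must carefully reconcile the pointwise Fubini--Study norms of the $(2,1)$- and $(2,2)$-basis forms with the volume-form weight on $\hb^-$ so that the fiberwise boundedness of the Cauchy transform on $\Delta$ integrates against the $u$-measure without loss.
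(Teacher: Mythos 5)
Your proof is correct in structure but takes a genuinely different route from the paper. The paper's argument is soft: extend $f$ by zero to a neighborhood $U$ of $\ol\hb^-$ with Lipschitz boundary and $\ol U\neq\cx\mathbb P^2$, solve $\dbar u=f$ in $L^2_{2,1}(U)$ (top-degree solvability on $U$), and then apply Friedrichs' lemma \emph{on the Lipschitz domain $U$} to get $u_\nu\in\ci(\ol U)$ converging to $u$ in the graph norm; restricting to $\ol\hb^-$ places $u$ in $\Dom(\dbar_s)$. The whole trick there is to do the smooth approximation on a larger Lipschitz domain so as to sidestep the conical boundary point $[1:0:0]$ of $\hb^-$. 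You instead build an explicit bounded right inverse $S$ via the fiberwise Cauchy transform in the chart $\{z_1\neq 0\}$, where indeed $\hb^-=\cb_u\times\Delta_v$ and $\ol\hb^-$ meets $\{z_1=0\}$ only at $[1:0:0]$; this buys strictly more than the paper's proof, namely a norm estimate $\|Sf\|\leq C\|f\|$ and hence a continuous solution operator, not merely surjectivity. The two points you leave open both check out. For the Fubini--Study bookkeeping: on $\hb^-$ one computes $\|g\,du\wedge dv\wedge d\bar u\wedge d\bar v\|^2\sim\int|g|^2(1+|u|^2)^3\,dV_{eucl}$ while $\|h\,du\wedge dv\wedge d\bar u\|^2\sim\int|h|^2(1+|u|^2)^2\,dV_{eucl}$ (using $|du\wedge dv|^2=(1+|u|^2+|v|^2)^3$ and $|d\bar u|^2=(1+|u|^2+|v|^2)(1+|u|^2)$), so since the weights depend only on $u$ and the Cauchy transform is bounded on $L^2(\Delta)$ fiberwise, your estimate holds with a factor $(1+|u|^2)^{-1}$ to spare. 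For membership in $\Dom(\dbar_s)$: you should say explicitly that the Pompeiu operator maps $\ci(\ol\Delta)$ into $\ci(\ol\Delta)$ (it gains a derivative, $C^{k,\alpha}(\ol\Delta)\to C^{k+1,\alpha}(\ol\Delta)$), so that $Sf_n$ is genuinely smooth up to $\{|v|=1\}$; combined with your cutoff in $u$, which makes $Sf_n$ vanish near $[1:0:0]$, this puts $Sf_n$ in $\ci_{2,1}(\ol\hb^-)$ and the graph-norm limit closes the argument.
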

  \begin{proof}
It is clear  that $\dbar$ has closed range in the top degree and the range is $L^2_{2,2}(\hb^-)$.   Let $f\in L^2_{2,2}(\hb^-)$.
 We extend $f$ to be zero outside $ \hb^-$.
Let $U$ be an open neighbourhood of $\overline \hb^-$, then $f$ is in $L^2_{2,2}(U)$. We can choose $U$ such that $\overline U$ is a proper subset of $\cx\mathbb P^2$  and $U$ has  Lipschitz boundary.   Since one can solve the $\dbar$ equation for  top degree forms  on $U$, there exists       $u\in L^2_{2,1}(U)$ such that
  $$\dbar u=f$$
  in the weak sense.

   It suffices to show that $f$ is in the range of $\dbar_s$. Since $U$ has Lipschitz boundary,
using Friedrichs' lemma, there exists a sequence $u_\nu\in C^\infty(\overline U)$ such that $u_\nu\to u$  and $\dbar u_\nu\to f$ in $L^2_{2,2}(U)$.  Restricting $u_\nu$ to $\overline \hb^-$, we have that   $u$ is  in the domain of $\dbar_s$ and
$$\dbar_s u=f.$$ Thus the range of
$\dbar_s$ is equal to $L^2_{2,2}(\hb^-)$.
   The lemma is proved.
  \end{proof}

  \begin{cor}\label{hausdorff} The  cohomology group
 $H^{0,1}_{\ol\hb^-, L^2}(\cx\mathbb P^2)$   is Hausdorff and infinite dimensional.
 \end{cor}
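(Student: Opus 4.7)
The infinite-dimensionality part is already established by Theorem~\ref{infinite}, so my plan focuses entirely on proving the Hausdorff property of $H^{0,1}_{\ol\hb^-, L^2}(\cx\mathbb P^2)$.

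The approach is to invoke the $L^2$ Serre duality framework from \cite{LaShdualiteL2}. Hausdorffness of $H^{0,1}_{\ol\hb^-, L^2}(\cx\mathbb P^2)$ amounts, by general principles, to closedness of the range of the weak minimal realization
\[ \dbar_{\tilde c}: L^2(\cx\mathbb P^2,\,\mathrm{supp}\subset\ol\hb^-) \longrightarrow L^2_{0,1}(\cx\mathbb P^2,\,\mathrm{supp}\subset\ol\hb^-).\]
Endowing $\cx\mathbb P^2$ with the Fubini--Study metric and using the associated Hodge star, the Hilbert-space adjoint of $\dbar_{\tilde c}$ will be identified (up to signs and a unitary isomorphism) with the strong closure $\dbar_s:L^2_{2,1}(\hb^-)\to L^2_{2,2}(\hb^-)$ appearing in Lemma~\ref{range}. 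The key point is that the support condition $\mathrm{supp}\subset\ol\hb^-$ on the primal side encodes precisely the Dirichlet-type boundary condition at $\partial\hb^-$ characterizing $\mathrm{Dom}(\dbar_s^*)$. By the closed range theorem, one operator has closed range if and only if the other does.

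Lemma~\ref{range} asserts that $\dbar_s:L^2_{2,1}(\hb^-)\to L^2_{2,2}(\hb^-)$ is surjective, hence has closed range. Via the adjoint identification, $\dbar_{\tilde c}$ therefore also has closed range, and so $H^{0,1}_{\ol\hb^-, L^2}(\cx\mathbb P^2)$ is Hausdorff. Combined with Theorem~\ref{infinite}, this yields the Corollary.

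The main obstacle is justifying the adjoint correspondence above in the non-Lipschitz setting, where Friedrichs-type approximation fails and the weak and strong $L^2$ closures of $\dbar$ are not known to coincide. This is precisely what the $L^2$ Serre duality framework of \cite{LaShdualiteL2} is designed to handle: it provides the pairing and the adjoint identification without requiring Lipschitz boundary regularity. Once that machinery is invoked, the surjectivity statement in Lemma~\ref{range} combined with the closed range theorem gives the Hausdorff property with no further analytic input.
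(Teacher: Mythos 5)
Your proposal is correct and follows exactly the route the paper intends: the corollary is stated without a written proof precisely because it combines Theorem \ref{infinite} (infinite dimensionality) with Lemma \ref{range} via the $L^2$ Serre duality of \cite{CS2012}/\cite{LaShdualiteL2}, identifying the Hilbert-space adjoint of $\dbar_{\tilde c}$ on forms supported in $\ol\hb^-$ with $\dbar_s\colon L^2_{2,1}(\hb^-)\to L^2_{2,2}(\hb^-)$ through the Hodge star and invoking the closed range theorem -- the same duality the paper spells out in the proof of the theorem that follows. No gaps.
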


  \begin{thm} Let us consider the Hartogs'  triangle $\hb^-\subset\cx\mathbb{P}^2$.
   Then the cohomology group
 $H^{2,1}_{\dbar_s, L^2}(\hb^-)$ is infinite dimensional.
 \end{thm}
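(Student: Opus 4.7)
The plan is to derive the infinite dimensionality of $H^{2,1}_{\dbar_s, L^2}(\hb^-)$ from Corollary \ref{hausdorff} via $L^2$-Serre duality. The candidate pairing is
\[
\bigl([\phi],[\psi]\bigr)\longmapsto \int_{\hb^-}\phi\wedge\psi,
\]
where $\phi\in L^2_{2,1}(\hb^-)$ represents a class in $H^{2,1}_{\dbar_s, L^2}(\hb^-)$ and $\psi\in L^2_{0,1}(\cx\mathbb P^2)$ with $\supp\psi\subset\ol\hb^-$ represents a class in $H^{0,1}_{\ol\hb^-,L^2}(\cx\mathbb P^2)$.

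First I would verify that this pairing descends to cohomology. If $\phi=\dbar_s\alpha$ for $\alpha\in \Dom(\dbar_s)\subset L^2_{2,0}(\hb^-)$, then by definition of the strong closure there exist $\alpha_\nu\in\ci_{2,0}(\ol\hb^-)$ with $\alpha_\nu\to\alpha$ and $\dbar\alpha_\nu\to\dbar_s\alpha$ in $L^2$. Stokes' theorem on $\hb^-$, combined with $\dbar\psi=0$ in $\cx\mathbb P^2$ and $\supp\psi\subset\ol\hb^-$, yields $\int_{\hb^-}\dbar\alpha_\nu\wedge\psi=0$, and passing to the limit gives $\int_{\hb^-}\phi\wedge\psi=0$. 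Symmetrically, if $\psi=\dbar u$ for a distribution $u$ on $\cx\mathbb P^2$ with $\supp u\subset\ol\hb^-$, the pairing vanishes on $\dbar_s$-closed $\phi$.

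Next I would invoke the abstract $L^2$-Serre duality theorem of \cite{LaShdualiteL2}, which turns the above pairing into a topological duality between $H^{2,1}_{\dbar_s,L^2}(\hb^-)$ and $H^{0,1}_{\ol\hb^-,L^2}(\cx\mathbb P^2)$ as soon as both sides are Hausdorff. The Hausdorff property of $H^{0,1}_{\ol\hb^-,L^2}(\cx\mathbb P^2)$ is supplied by Corollary \ref{hausdorff}; the complementary closed range property for $\dbar_s:L^2_{2,1}(\hb^-)\to L^2_{2,2}(\hb^-)$ needed to pin down the cohomological dual on the $\hb^-$-side is exactly Lemma \ref{range}. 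Together these are the precise hypotheses under which the pairing becomes a perfect duality of Hausdorff topological vector spaces.

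The conclusion is then immediate: since $H^{0,1}_{\ol\hb^-,L^2}(\cx\mathbb P^2)$ is infinite dimensional by Theorem \ref{infinite} (and Hausdorff by Corollary \ref{hausdorff}), its topological dual $H^{2,1}_{\dbar_s,L^2}(\hb^-)$ must likewise be infinite dimensional. The main obstacle is justifying that the abstract $L^2$-Serre duality machinery applies despite $\hb^-$ being non-Lipschitz; however, the duality is built at the abstract Hilbert-space level, using the strong closure on the $\hb^-$-side and the minimal (prescribed support) closure on the $\cx\mathbb P^2$-side, so once the two closed-range conditions above are in hand, the application is direct and no further boundary regularity is needed.
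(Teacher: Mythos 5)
There is a genuine gap at the point where you assert that the Hausdorff property of $H^{0,1}_{\ol\hb^-,L^2}(\cx\mathbb P^2)$ (Corollary \ref{hausdorff}) together with the closed range of $\dbar_s:L^2_{2,1}(\hb^-)\to L^2_{2,2}(\hb^-)$ (Lemma \ref{range}) are ``the precise hypotheses'' under which the pairing becomes a perfect duality between $H^{2,1}_{\dbar_s,L^2}(\hb^-)$ and $H^{0,1}_{\ol\hb^-,L^2}(\cx\mathbb P^2)$. Those two conditions are essentially one and the same: the closed range of $\dbar_s$ in degree $(2,1)\to(2,2)$ is, by adjointness, equivalent to the closed range of $\dbar_{\tilde c}:L^2(\hb^-)\to L^2_{0,1}(\hb^-)$, i.e.\ to the Hausdorff property of $H^{0,1}_{\ol\hb^-,L^2}(\cx\mathbb P^2)$; indeed this is exactly how the paper deduces Corollary \ref{hausdorff} from Lemma \ref{range}. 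The $L^2$ Serre duality in the form you invoke (an isomorphism, or perfect topological duality, for the \emph{non-reduced} cohomology at the spot $(2,1)$) also requires the closed range of $\dbar_s:L^2_{2,0}(\hb^-)\to L^2_{2,1}(\hb^-)$, equivalently of $\dbar_{\tilde c}:L^2_{0,1}(\hb^-)\to L^2_{0,2}(\hb^-)$; this is precisely what would make $H^{2,1}_{\dbar_s,L^2}(\hb^-)$ Hausdorff. For the Hartogs triangle this closed range property is not known (it is explicitly listed as open in the remarks at the end of the paper), so your phrase ``as soon as both sides are Hausdorff'' quietly assumes the unavailable hypothesis, and the non-Lipschitz boundary is the very reason it cannot be checked, not a harmless technicality.

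The paper closes this gap by a dichotomy, which your argument should incorporate. If $\dbar_s:L^2_{2,0}(\hb^-)\to L^2_{2,1}(\hb^-)$ does not have closed range, then $H^{2,1}_{\dbar_s,L^2}(\hb^-)$ is non-Hausdorff and hence automatically infinite dimensional (a range of finite codimension inside its closure would be closed). If it does have closed range, then together with Lemma \ref{range} both closed-range hypotheses of the $L^2$ Serre duality of \cite{CS2012} are satisfied, and one obtains $H^{2,1}_{\dbar_s,L^2}(\hb^-)\cong H^{0,1}_{\ol\hb^-,L^2}(\cx\mathbb P^2)$, which is infinite dimensional by Theorem \ref{infinite}. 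Alternatively, your pairing could be run at the level of reduced cohomology (harmonic spaces), where duality needs no closed range assumptions, followed by the observation that the non-reduced group surjects onto the reduced one; but as written, your proof uses the non-reduced duality and therefore rests on the open closed-range question.
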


 \begin{proof}
    Suppose that  $\dbar_s:L^2_{2,0}(\hb^-)\to L^2_{2,1}(\hb^-)$ does not have  closed range.
Then       $H^{2,1}_{\dbar_s, L^2}(\hb^-) $
is non-hausdorff, hence   infinite dimensional.

Suppose that  $\dbar_s:L^2_{2,0}(\hb^-)\to L^2_{2,1}(\hb^-)$  has closed range.
 Using Lemma \ref{range},  $\dbar_s:L^2_{2,1}(\hb^-)\to L^2_{2,2}(\hb^-)$ has closed range.   From the $L^2$ Serre  duality,
     $\dbar_{\tilde c} :L^2(\hb^-) \to L^2_{0,1} (\hb^-)$ and  $\dbar_{\tilde c} :L^2_{0,1}(\hb^-) \to L^2_{0,2} (\hb^-)$ both have closed range. Furthermore,
  \begin{equation} H^{2,1}_{\dbar_s, L^2}(\hb^-) \cong H^{0,1}_{\ol\hb^-, L^2}(\cx\mathbb P^2).\end{equation}
  Thus from Theorem \ref{infinite},  it is infinite dimensional.

 \end{proof}

\bigskip
\noindent{\bf Remarks:}

\begin{enumerate}

 \item Let $\mathbb T=\{(z_1,z_2)\in \cx^2\mid |z_2|< |z_1|<1\}$ be the Hartogs triangle in $\cx^2$. Then by Proposition \ref{CNconnected},
 $$H^{0,1}_{\dbar_{\tilde c}, L^2}(\mathbb T) =H^{0,1}_{ \overline {\mathbb T}, L^2}(\cx^2)=0.$$
 This is in sharp contrast to Corollary \ref{hausdorff}.

It is well-known that $H^{01}(\mathbb T)=0$ since $\mathbb T$ is pseudoconvex,  but  $H^{0,1}_\infty(\overline{\mathbb T})$ (cohomology with forms smooth up to the boundary) is infinite dimensional (see \cite{Si}). In fact, $H^{0,1}(\overline{\mathbb T})$ is even   non-Hausdorff (see
     \cite{LaSh2}). We also refer the reader to the recent survey paper on the Hartogs triangle \cite{Sh}.

\item  If  $D$ is  a domain in $\cx\mathbb P^n$ with  $C^2$ boundary,  then we have  $L^2$ existence theorems  for $\dbar$ on $D$ for all degrees (see \cite{BC}  \cite{HI},   \cite{CSW}).  This follows from the existence of bounded plurisubharmonic functions on pseudoconvex domains in $\cx\mathbb P^n$ with $C^2$ boundary (see \cite{OS}).  This is even true if $D$ has only Lipschitz boundary (see \cite{Ha}).

\item Suppose that $D$ is a pseudoconvex domain  in $\cx\mathbb P^n$ with Lipschitz boundary, we have $H^{p,q}_{L^2}(D)=0$ for all $q>0$.
By the $L^2$ Serre duality (see \cite{CS2012}), we have
  $H^{0,1}_{\dbar_c, L^2} (D)= H^{0,1}_{\overline D, L^2}(\cx\mathbb P^n)=0.$
 Corollary \ref{hausdorff} shows that the Lipschtz  condition cannot be removed.

\item From a result of Takeuchi \cite{Ta}, $\hb^-$ is Stein.   It is well-known that  for any $p$, $0\le p\le 2$,
  $\dbar :L^2_{p,0}(\hb^-,\text{loc}) \to L^2_{p,1}(\hb^-,\text{loc})$ has closed range (see   \cite{Ho}) and the cohomology
  $H^{p,1}_{L^2_{\text{loc}}}(\hb^-)$  in
the Frech\'et space   $L^2_{0,1}(\hb^-,\text{loc})$ is trivial.

 \item   The (weak) $L^2$ theory   holds for any pseudoconvex  domain without any regularity assumption on the boundary for $(0,1)$-forms.
  The (weak)  $L^2$ Cauchy-Riemann operator
 $\dbar:L^2(\hb^-)\to L^2_{0,1}(\hb^-)$   has closed range and  $H^{0,1}_{L^2}(\hb^-)=0$ (see  \cite{HI} or \cite{CSW}).

\item For $p=1$ or $p=2$, it is not known  if
  the Cauchy-Riemann operator  $\dbar:L^2_{p,0}(\hb^-)\to L^2_{p,1}(\hb^-)$   has closed range. It is also not known if    $\dbar$ in the weak sense   is  equal to  $\dbar_s$.

\item It is not known if the   strong $L^2$ Cauchy-Riemann operator $\dbar_s:L^2_{2,0}(\hb^-)\to L^2_{2,1}(\hb^-)$   has     closed range.

\end{enumerate}

\providecommand{\bysame}{\leavevmode\hbox to3em{\hrulefill}\thinspace}
\providecommand{\MR}{\relax\ifhmode\unskip\space\fi MR }
% \MRhref is called by the amsart/book/proc definition of \MR.
\providecommand{\MRhref}[2]{%
  \href{http://www.ams.org/mathscinet-getitem?mr=#1}{#2}
}
\providecommand{\href}[2]{#2}

\enddocument

\end